\numberwithin{equation}{section}
\def\mE{\mathcal{E}}
\def\mF{\mathcal{F}}
\def\mM{\mathcal{M}}
\newtheorem{thm}{Theorem}[section]
\newtheorem{lemma}[thm]{Lemma}
\newtheorem{prop}[thm]{Proposition}
\newtheorem{cor}[thm]{Corollary}
\theoremstyle{definition}
\newtheorem{rem}[thm]{Remark}
\theoremstyle{definition}
\newtheorem{defn}[thm]{Definition}
\newcommand{\be}{\begin{eqnarray}}
\newcommand{\ee}{\end{eqnarray}}
\newcommand{\comment}[1]{}
\begin{document}

\title{Dirac operators on foliations:  the Lichnerowicz inequality
}
\author{Weiping Zhang}

\address{Chern Institute of Mathematics \& LPMC, Nankai
University, Tianjin 300071, P.R. China}
\email{weiping@nankai.edu.cn}

\begin{abstract}    
We construct Dirac operators on foliations by applying the Bismut-Lebeau analytic localization technique to the Connes fibration over a foliation. The Laplacian of the resulting  Dirac operators has better lower bound than that obtained by using the usual adiabatic limit arguments on the original foliation. As a consequence, we prove an extension of the Lichnerowicz-Hitchin vanishing  theorem to the case of foliations.
\end{abstract}

\maketitle
\tableofcontents

\setcounter{section}{-1}

\section{Introduction} \label{s0}
  
Let $D$ be the canonical  Dirac operator on a closed spin Riemannian  manifold. Then the standard   Lichnerowicz  formula  \cite{L63} states that 
$D^2=-\Delta+\frac{k}{4}$,
where $\Delta$ is the associated Bochner Laplacian and $k $ is the scalar curvature of the Riemannian manifold. Moreover,   $-\Delta $ is nonnegative and one has the classical inequality
\begin{align}\label{002}
D^2\geq \frac{k}{4}.
\end{align}
The purpose of this  paper is  to   generalize of this {\it Lichnerowicz inequality} to the
case of foliations.

\comment{

Let
$\widehat{\mathcal A}(M)$ be defined by that if $\dim M=8k+4i$
with $i=0$ or $1$, then $\widehat{\mathcal
A}(M)=\frac{3-(-1)^{i+1}}{4}\widehat{A}(M)$, where $\widehat{A}(M)$ is  the Hirzebruch
$\widehat{A}$-genus\footnote{Cf. \cite[pp. 13]{Z01} for a
definition.}; if $\dim M=8k+i$ with
$i=1$ or $2$, then $\widehat{\mathcal A}(M)\in {\bf Z}_2$ is the
Atiyah-Milnor-Singer $\alpha$ invariant\footnote{Cf. \cite[Section
2.7]{LaMi89} for a definition.}; while in other dimensions one
takes  $\widehat{\mathcal A}(M)=0$.

A classical  theorem of Lichnerowicz \cite{L63} states that if a
closed spin manifold of dimension $4k$  admits a Riemannian metric
of positive scalar curvature, then $\widehat{A}(M)=0$.   Hitchin \cite{Hi74}
extended it to the case of  $\widehat{\mathcal A}(M)$ in all dimensions. 

}

To be more precise, let $M$ be a smooth manifold, let $F $ be an
integrable subbundle of the tangent vector bundle $TM$ of $M$. Let
$g^F$ be a Euclidean metric on $F$. Then $g^F$ determines a
leafwise scalar curvature $k^F\in C^\infty(M)$ as follows: for any $x\in M$,   the integrable subbundle
$F$ determines a leaf ${\mathcal F}_x$ passing through $x$ such
that $F|_{{\mathcal F}_x}=T{\mathcal F}_x$. Thus, $g^F$ determines
a Riemannian metric on ${\mathcal F}_x$. Let $k^{{\mathcal F}_x}$
denote the scalar curvature of this Riemannian metric. We
define
\begin{align}\label{001}
k^F(x)=k^{{\mathcal F}_x}(x).
\end{align}

On the other hand, let $F^\perp\simeq TM/F$ be a subbundle of $TM$ which is transversal to $F$.\footnote{In what follows, we   identify $F^\perp$ with $TM/F$.} Let $g^{F^\perp}$ be a Euclidean metric on $F^\perp$. Then we get a Riemannian metric $g^{TM}$ on $TM$ so that we have an orthogonal splitting
\begin{align}\label{14.0.1}
TM=F\oplus F^\perp,\ \ \ \ \ \ \ \ g^{TM}=g^F\oplus g^{F^\perp}.
\end{align}

Let $\nabla^B$ be the Bott connection \cite{Bo70} on $TM/F\simeq F^\perp$. Set\footnote{Equivalently,  for any $X\in\Gamma( TM)$,   $U,\, V\in \Gamma(F^\perp)$, one has
$
\langle\omega(X)U,V\rangle=X\langle U,V\rangle-\left\langle \nabla^B_XU,V\right\rangle-\left\langle U,\nabla^B_XV\right\rangle.
$
}
\begin{align}\label{14.0.1b}
\omega=\left(g^{F^\perp}\right)^{-1}\nabla^Bg^{F^\perp}.
\end{align}

Now we assume that $M$ is spin.    Let $f_1,\,\cdots,\,f_q$ (resp. $h_1,\,\cdots,\,h_{q_1}$) be an orthonormal basis of $(F,g^F)$ (resp. $(F^\perp,g^{F^\perp})$). 

The main result of this paper can be stated as follows.

\begin{thm}\label{t0.2} Let $F $ be an  integrable subbundle  of the
tangent  bundle   of   a closed spin manifold $M$ as above. Then for any $c>0$, there is a formally self-adjoint Dirac type operator $D_c$   on $M$, which can be constructed canonically,\footnote{See (\ref{175n}) for a more precise form.}  such that the following inequality holds,
  \begin{align}\label{14.0.3}
 D^2_c+c\geq \frac{1}{4}\left( {k^F}-\frac{1}{4}\sum_{i=1}^q\sum_{s=1}^{q_1}|\omega(f_i)h_s|^2\right).
\end{align}
\end{thm}


\begin{cor}\label{t14.1}
Let $F $ be an  integrable subbundle  of the
tangent  bundle   of   a closed spin manifold $M$. Then if there is a metric $g^{TM}$ of form (\ref{14.0.1}) such that
\begin{align}\label{14.0.2}
    {k^F}-\frac{1}{4}\sum_{i=1}^q\sum_{s=1}^{q_1}|\omega(f_i)h_s|^2>0
\end{align}
over $M$, 
    one has $\widehat{\mathcal A}(M)=0$, where $\widehat{\mathcal A}(M)\in KO^{\dim M}({\rm pt.})$ is the canonical $KO$-characteristic number of $M$.\footnote{Cf. \cite[Section II.7] {LaMi89} for a definition of $\widehat{\mathcal A}(M)$.}
\end{cor}

When taking $F=TM$, Corollary \ref{t14.1} recovers  the classical vanishing theorems of Lichnerowicz \cite{L63} and Hitchin \cite{Hi74}) (cf. \cite[Theorem II.8.12]{LaMi89}). 

By (\ref{002}), a natural possible way to  prove Theorem \ref{t0.2} is to compute the scalar curvature $k^{TM,\varepsilon}$ of the metric $g^{TM}_\varepsilon=g^F\oplus \frac{1}{\varepsilon^2}g^{F^\perp}$, when $\varepsilon>0$ tends to zero.  An explicit formula for $k^{TM,\varepsilon}$ under the adiabatic limit $\varepsilon\rightarrow 0$ is included in Appendix \ref{sA}, from which one sees that the condition in (\ref{14.0.2}) is   cleaner than what one would expect from $k^{TM,\varepsilon}$ (cf. (\ref{a1.9})). Indeed, even in the codimension one case, the bound $-\frac{1}{4}$ in  (\ref{14.0.2}) is   better than what one would expect from $k^{TM,\varepsilon}$, which is $-\frac{3}{4} $ (cf. (\ref{a1.12a})){.

\begin{rem}\label{t002a}
 Corollary  \ref{t14.1}   maybe thought of   as a non-existence result. For example, take any $8k+1$ dimensional closed spin manifold $M$ such that $\widehat{\mathcal A}(M)\neq 0$. Then by a result of Thurston \cite{T76}, there always exists a codimension one foliation on $M$. However, by our result, there is no metric on   $TM$ verifying  (\ref{14.0.2}). 
\end{rem}

Our original motivation, dating back to  \cite{LZ01},  is to look for  a purely geometric understanding of the following 
 celebrated vanishing theorem of Connes, where instead of
assuming    $TM$   being  spin, one assumes that $F$ is spin .

 \begin{thm}\label{t0.1} {\bf (Connes \cite{Co86})}   Let  $F$ be  a spin integrable subbundle of  the tangent   bundle of a  compact
oriented manifold $M$. If there is a metric  $g^F$ on $F$
such that $k^F>0$ over $M$, then  $\widehat{A}(M)=0$.
\end{thm}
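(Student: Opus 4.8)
The plan is to prove Theorem~\ref{t0.1} along the same lines one uses for Theorem~\ref{t0.2}, but exploiting only that $F$ (not $TM$) is spin: transplant the problem to Connes' auxiliary fibration, where the transverse data become geometrically tame, and then run a leafwise Lichnerowicz argument whose non-compactness is handled by the Bismut--Lebeau analytic localization technique. One may assume $\dim M\equiv 0\ (\mathrm{mod}\ 4)$, since otherwise $\widehat A(M)=0$ trivially.

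\textbf{Step 1 (the Connes fibration).} Fix $g^{TM}$ with $TM=F\oplus F^\perp$ and set $N=TM/F\cong F^\perp$. Let $\pi\colon\widehat M\to M$ be the fiber bundle whose fiber over $x$ is the space of Euclidean inner products on $N_x$; the fibers are diffeomorphic to $GL^+(k)/SO(k)$, $k=\operatorname{rk}N$, hence contractible, so $\pi$ is a homotopy equivalence and the index theory on $\widehat M$ ultimately reduces to characteristic numbers of $M$. The integrable bundle $F$ lifts to the integrable bundle $T\widehat F:=\pi^*F\oplus T^V\widehat M$, where $T^V\widehat M=\ker d\pi$; its leaves are the $\pi$-preimages of the leaves of $F$. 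Since $T^V\widehat M$ is canonically trivial and $F$ is spin, $T\widehat F$ is spin, so the leafwise spinor bundle $S(T\widehat F)$ exists; meanwhile $N\widehat F=\pi^*N$ carries the tautological Euclidean metric and a natural (Bott-type) connection. This is precisely the point of the construction: although $N$ itself need not be spin, the transverse geometry of $\widehat F$ is canonical.

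\textbf{Step 2 (the sub-Dirac operator and its index).} On $S(T\widehat F)\,\widehat{\otimes}\,\Lambda^\bullet\big((N\widehat F)^*\big)$, I would combine the leafwise $\widehat F$-Dirac operator with the transverse Clifford action $v\mapsto v\wedge\cdot\,-\,\iota_v$ on $\Lambda^\bullet$ into a formally self-adjoint, everywhere elliptic first-order operator $D$ on $\widehat M$ — the sub-Dirac operator of $(\widehat M,\widehat F)$. Since $\widehat M$ is non-compact, $D$ is not a priori Fredholm; following Bismut--Lebeau, deform $D$ by a fiberwise term that grows at infinity along each $\pi$-fiber (a fiberwise harmonic-oscillator perturbation with large parameter $T$), producing $D_T$. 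One then shows $D_T$ is Fredholm with index independent of $T$, and, letting $T\to\infty$, that its index localizes to an arbitrarily small neighborhood of the zero section $M\subset\widehat M$, where the fiberwise Gaussian ground states single out the $S(N)$ factor in $\Lambda^\bullet\big((N\widehat F)^*\big)\cong S(N)\,\widehat{\otimes}\,S(N)^*$; the local model computation, as in Bismut--Lebeau, then gives
\begin{align*}
\operatorname{ind}(D_T)=\pm\big\langle \widehat A(F)\,\widehat A(N),[M]\big\rangle=\pm\,\widehat A(M).
\end{align*}
This step uses no curvature hypothesis.

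\textbf{Step 3 (leafwise Lichnerowicz) and conclusion.} The sub-Dirac operator is arranged so that its Bochner--Lichnerowicz identity isolates the leafwise (in $M$) scalar curvature: $D^2$ is a nonnegative operator plus $\tfrac14\,\pi^*k^F$ plus bounded zeroth-order terms built from the curvature of $N\widehat F$ (which, being pulled back, does not grow along the $\pi$-fibers), with the analogous formula for $D_T$ carrying in addition the fiberwise oscillator with positive coefficient $\sim T$. Since $k^F>0$ on $M$, $\pi^*k^F>0$ on all of $\widehat M$; combining this with the Bismut--Lebeau uniform estimates that dominate the cross terms shows that for $T$ large $D_T^2$ is bounded below by a positive constant everywhere — near $M$ by $\tfrac14\pi^*k^F$, away from $M$ by the $T$-oscillator — so $D_T$ is invertible and $\operatorname{ind}(D_T)=0$. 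Together with Step~2 this forces $\widehat A(M)=0$. The main obstacle is entirely analytic and lives on the non-compact $\widehat M$: the Bismut--Lebeau package of $T$-uniform elliptic/Sobolev estimates that (i) make $\operatorname{ind}(D_T)$ well defined and $T$-independent, (ii) force the small spectrum of $D_T$ to concentrate at the zero section as $T\to\infty$, and (iii) identify the resulting local model with the operator computing $\widehat A(M)$, all while controlling the O'Neill/warping contributions to the geometry of $\widehat F$ so that the zeroth-order term of $D_T^2$ is genuinely $\tfrac14\pi^*k^F$ up to harmless errors. Steps~1 and the symbol algebra of Step~2 are essentially formal; these uniform-in-$T$ estimates are where the real work lies.
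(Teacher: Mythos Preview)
Your big-picture strategy — pass to the Connes fibration and use Bismut–Lebeau localization to reduce to a compact model where a Lichnerowicz-type argument applies — matches the paper's. But there is a genuine gap at the heart of Step~3.

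Your Lichnerowicz claim, that $D^2$ equals a nonnegative operator plus $\tfrac14\pi^*k^F$ plus bounded zeroth-order terms, is not justified and in fact fails without an ingredient you have omitted entirely: an \emph{adiabatic rescaling} of the metric. The Lichnerowicz formula for any sub-Dirac operator on $\widehat M$ involves the full scalar curvature $k^{T\widehat M}$ (compare (\ref{1.47})), not $\pi^*k^F$. These differ by contributions of the same order — in particular the scalar curvature of the fibers $GL^+(k)/SO(k)$, which is \emph{nonpositive}, and various O'Neill terms — and there is no a priori reason the positive $\pi^*k^F$ should dominate. The paper resolves this by introducing a two-parameter rescaled metric $g^{T\mM}_{\beta,\varepsilon}=\beta^2 g^{\mF}\oplus \varepsilon^{-2}g^{\mF_1^\perp}\oplus g^{\mF_2^\perp}$ and proving (Proposition~\ref{t12.1} and (\ref{1.50})) that $k^{T\mM,\beta,\varepsilon}=k^F/\beta^2+O(1/\beta+\varepsilon^2/\beta^2)$; only then does $k^F$ dominate, and the bulk of the work (Sections~\ref{s2.5}--\ref{s2.8}, culminating in Proposition~\ref{t12.4}) is devoted to controlling the interaction of this $(\beta,\varepsilon)$-rescaling with the $T$-localization. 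Your proposal has only the $T$-parameter. You do flag the ``O'Neill/warping contributions'' as needing control, but labeling them ``harmless errors'' is exactly what must be proved, and is where the real mechanism lives.

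There are also secondary issues. Your foliation $T\widehat F=\pi^*F\oplus T^V\widehat M$ differs from the paper's: the paper uses only the lift $\mF$ of $F$ and handles both transverse pieces $\mF_1^\perp$ and $\mF_2^\perp=T^V\mM$ via exterior algebras, so that only the spin structure on $F$ is required. Your claim that $T^V\widehat M$ is ``canonically trivial'' is incorrect — one has $T^V\widehat M\cong\pi^*\mathrm{Sym}^2(N^*)$, which is generally nontrivial — so the spin structure on your $T\widehat F$ is not automatic. Finally, your index identification in Step~2 invokes the splitting $\Lambda^\bullet(N^*)\cong S(N)\,\widehat\otimes\,S(N)^*$, which presupposes $N$ spin; the paper instead obtains $\langle\widehat A(F)\,\widehat L(TM/F)\,\mathrm{ch}(\phi_1(TM/F)),[M]\rangle=0$ for an arbitrary twist $\phi_1$ (see (\ref{m49a})) and extracts $\widehat A(M)=0$ from the freedom in choosing $\phi_1$.
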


Clearly, if one assumes that $\dim M=4k$ and that $TM$ is also spin, then Theorem \ref{t0.1} is stronger than Corollary \ref{t14.1} in this case.

Recall that the proof outlined    in \cite{Co86} for Theorem
\ref{t0.1} uses in an essential way the noncommutative geometry. It is based on the
Connes-Skandalis longitudinal index theorem for foliations
\cite{CoS84} as well as the techniques of cyclic cohomology. Thus
it relies  on the spin structure on $F$.  
Also, it does not cover the $\dim M= 8k+i $ ($i=1,\,2$) cases. 

Our main result, as stated in Theorem \ref{t0.2}, concerns   concrete Dirac type operators on $M$. It gives more information (like eigenvalue estimates) than just the index.

The construction of the Dirac type  operator in Theorem \ref{t0.2}  makes use  of the important  geometric trick in
\cite{Co86}, which is the construction of a
fibration\footnote{Which will be called a Connes fibration in what
follows.} over an arbitrary foliation. The key advantage of this fibration is that the lifted (from the original) foliation is
almost isometric, i.e., very close to the Riemannian foliation (which corresponds to the $\omega=0$ case). On the other hand, this fibration is
noncompact, which makes the proof of   Theorem \ref{t0.2}  highly nontrivial.

Roughly speaking, the Connes fibration over a foliation 
$(M,F)$ is a fibration $\pi:\mM\rightarrow M$ where for any $x\in
M$, the fiber $\pi^{-1}(x)$ is the space of Euclidean metrics on
the quotient space $T_xM/F_x$. The integrable subbundle $F$ of
$TM$ lifts to an integrable subbundle $\mF$ of $T\mM$, and
$(\mM,\mF)$ carries  an almost isometric structure  in the sense
of  \cite[Section 4]{Co86}.  Take any metric on the transverse
bundle $TM/F$, which by definition determines an embedded section
$s:M\hookrightarrow \mM$. 

\comment{

The induced fibration
$s\circ\pi:\mM\rightarrow s(M)$ looks like a vector bundle, and
Connes obtained his theorem by examining the corresponding
Riemann-Roch property in   noncommutative frameworks.

}

Our proof of Theorem \ref{t0.2}   applies 
the   analytic localization techniques,  as developed by
Bismut-Lebeau \cite[Sections 8 and 9]{BL91}, to the embedding $s:M\hookrightarrow \mM$, and can be thought of
as a kind of transgression.  

To be more precise, let $T^V\mM$ be the vertical tangent bundle of
the Connes fibration $\pi:\mM\rightarrow M$.  
 Taking a splitting
$
 T {\mM}= {\mF}\oplus T^V {\mM}\oplus
  {\mF}^\perp,
$
then $T^V\mM$   (resp. $ {\mF}^\perp\simeq
\pi^*(TM/F)$)  carries a natural metric $g^{T^V\mM}$ 
(resp. $g^{ {\mF}^\perp}$).
 If
one lifts $g^F$ to a metric $g^{ {\mF}}$ on
$ {\mF}$, then for any $\beta>0$, $\varepsilon>0$, one can
consider the rescaled metric
$g_{\beta,\varepsilon}^{T {\mM}}=\beta^2g^{ {\mF}}\oplus
 g^{T^V {\mM}}\oplus
\frac{g^{ {\mF}^\perp}}{\varepsilon^2}$.

Since $TM$ is assumed to be spin,
  $ {\mF}\oplus {\mF}^\perp\simeq
 {\pi}^*(TM)$ is also spin. Thus one can construct a Dirac
type operator\footnote{Called a sub-Dirac operator in
\cite{LZ01}.} $D_{\beta,\varepsilon}^{ {\mM}}$ acting on
$\Gamma(S( {\mF}\oplus {\mF}^\perp)\otimes
\Lambda^*(T^V {\mM}))$, where $S(\cdot)$ (resp.
$\Lambda^*(\cdot)$) is the notation for spinor bundle (resp.
exterior algebra bundle).

Now take a  sufficiently small open neighborhood $U$ of
$ {s}(M)$ in $  {\mM}$. Inspired by \cite{BL91}, for
any $\beta,\ \varepsilon,\ T>0$, we construct an isometric
embedding (see Section 2 for more details)
\begin{align}\label{003}
J_{T,\beta,\varepsilon}:\Gamma\left(\left. S
\left( {\mF}\oplus {\mF}^\perp\right)
\right|_{ {s}(M)}\right)\rightarrow
\Gamma\left(S\left( {\mF}\oplus {\mF}^\perp\right)\otimes
\Lambda^*\left(T^V {\mM}\right)\right)
\end{align}
 such
that for any
$\sigma\in\Gamma(S( {\mF}\oplus {\mF}^\perp)|_{ {s}(M)}
)$, $J_{T,\beta,\varepsilon}\sigma$ has compact support in $U$.
Let $E_{T,\beta,\varepsilon}$ be the $L^2$-completion of the image
space of $J_{T,\beta,\varepsilon}$. Let
$p_{T,\beta,\varepsilon}:L^2(S( {\mF}\oplus {\mF}^\perp)\otimes
\Lambda^*(T^V {\mM}))\rightarrow E_{T,\beta,\varepsilon}$
be the orthogonal projection. Then one finds that the operator
\begin{align}\label{004}
J_{T,\beta,\varepsilon}^{-1}p_{T,\beta,\varepsilon}
D_{\beta,\varepsilon}^{ {\mM}}
J_{T,\beta,\varepsilon}:\Gamma\left(\left. S
\left( {\mF}\oplus {\mF}^\perp\right)
\right|_{ {s}(M)}\right)\rightarrow \Gamma\left(\left. S
\left( {\mF}\oplus {\mF}^\perp\right)
\right|_{ {s}(M)}\right)
\end{align}
 is elliptic, formally self-adjoint and homotopic to the Dirac
operator on $ {s}(M)\simeq M$. Thus Theorem \ref{t0.2} will
follow if one can show that for certain values of $\beta$,
$\varepsilon$ and $T$, this operator verifies the estimate in (\ref{14.0.3}). Indeed, this
is exactly what we will establish  in this paper.

We would  like to mention that the idea of constructing sub-Dirac
operators has also been used in \cite{LMZ01} to prove a
generalization of the Atiyah-Hirzebruch vanishing theorem for circle
actions \cite{AH} to the case of foliations.

This paper is organized as follows. In Section 1, we discuss the
case of almost isometric foliations and carry out the local
computation. We also introduce the sub-Dirac operator in this section. In Section 2, we work on   noncompact Connes
  fibrations and carry out the proof of Theorem \ref{t0.2}. There is also an Appendix \ref{sA} where we include a caculation of the adiabatic limit behaviour of the scalar curvature on a foliation.

$\ $

\noindent{\bf Acknowledgements} The author  is indebted  to Kefeng
LIU for sharing his ideas in the joint work \cite{LZ01} and for
many related discussions.  The author is also grateful to Huitao
FENG,  Xiaonan MA and Yong WANG for  many helpful suggestions. We would also like to thank the referees of this paper for many helpful suggestions. This work was
partially supported by MOEC and NNSFC.

\section{Adiabatic limit and almost isometric  foliations}\label{s1}

In this section, we discuss the geometry of almost isometric
foliations in the sense of Connes \cite{Co86}. We introduce for
this kind of foliations a rescaled  metric and show that 
the leafwise scalar curvature shows up
from the limit behavior of the rescaled  scalar curvature.
We also introduce in this setting the sub-Dirac operators inspired
by the original construction given in \cite{LZ01}.
Finally, by combining the above two procedures, we prove a vanishing
result when the almost isometric foliation under discussion is
compact.

This section is organized as follows. In Section \ref{s1.1}, we
recall the definition of the almost isometric foliation in the
sense of Connes. In Section \ref{s1.2} we introduce a rescaling of
the given metric on the almost isometric foliation  and study the
corresponding limit behavior of the scalar curvature. In Section
\ref{s1.3}, we study  Bott type connections on certain bundles
transverse  to the integrable subbundle. In Section \ref{s1.4}, we
introduce the so called sub-Dirac operator  and compute the
corresponding Lichnerowicz type formula. In Section \ref{s1.5} we
prove a vanishing result when the almost isometric foliation is
compact and verifies the conditions in Theorem \ref{t0.1}.

\subsection{Almost isometric foliations
}\label{s1.1}

Let $(M,F)$ be a foliated manifold, where $F$ is an integrable
subbundle of $TM$, i.e., for any smooth sections  $X,\ Y\in\Gamma(F)$, one has
\begin{align}\label{1.0}
[X,Y]\in \Gamma(F).
\end{align}

 Let $G$ be the holonomy groupoid of
$(M,F)$ (cf.
  \cite{W83}).

Let $TM/F$ be the transverse  bundle. We make the assumption that
there is a proper subbundle $E$ of $TM/F$ and choose a splitting
\begin{align}\label{1.1} TM/F=E\oplus (TM/F)/E.
\end{align}
Let $q_1$, $q_2$ denote the ranks of $E$ and $(TM/F)/E$
respectively.

\begin{defn}\label{t1.1} {\bf (Connes \cite[Section 4]{Co86})} If there exists a metric $g^{TM/F}$ on
$TM/F$ with its restrictions to $E$ and $(TM/F)/E$ such that the
action of $G$ on $TM/F$ takes the form \begin{align}\label{1.2}
\left(
\begin{array}{cc}
O(q_1) & 0 \\
A & O(q_2)%
\end{array}%
\right),
\end{align}
where $O(q_1)$, $O(q_2)$ are orthogonal matrices of ranks $q_1$,
$q_2$ respectively, and $A$ is a $q_2\times q_1$ matrix, then we say
that $(M,F)$ carries an almost isometric structure.
\end{defn}

Clearly, the existence of the almost isometric structure does not
depend on the splitting (\ref{1.1}).
We assume from now on that $(M,F)$ carries an almost isometric structure as
above.

Now choose a splitting 
$TM =F\oplus F^\perp.$
We can and we will identify $TM/F$ with $F^\perp$. Thus $E$ and
$(TM/F)/E$ are identified with subbundles $F^\perp_1$, $F^\perp_2$
of $F^\perp$  respectively.

Let $g^F$ be a metric on $F$. Let $g^{F^\perp}$ be the metric on
$F^\perp$ corresponding to the metric $g^{TM/F}$ and let
$g^{F^\perp_1}$, $g^{F^\perp_2}$ be the restrictions of
$g^{F^\perp}$ to $F^\perp_1$, $F^\perp_2$.

Let $g^{TM}$ be a metric on $TM$ so that we have the orthogonal
splitting
\begin{align}\label{1.4}
       TM =  F\oplus F^\perp_1\oplus F^\perp_2,\ \ \ \ \ \ \
g^{TM} = g^{F}\oplus g^{F^\perp_1}\oplus
g^{F^\perp_2}.
\end{align}
Let $\nabla^{TM}$ be the Levi-Civita connection associated to
$g^{TM}$.

From the almost isometric condition (\ref{1.2}), one deduces that
for any $X\in \Gamma(F)$, $U_i,\, V_i\in\Gamma(F^\perp_i)$, $i=1,\
2$, the following identities, which may be thought of as infinitesimal versions of (\ref{1.2}),
 hold (cf. \cite[(A.5)]{LZ01}):
\begin{align}\label{1.5} \begin{split}
       \langle [X,U_i],V_i\rangle+\langle U_i,[X,V_i]\rangle =X\langle U_i,V_i\rangle,\\
\langle [X,U_2],U_1\rangle =0.\end{split}
\end{align}
Equivalently,
\begin{align}\label{1.6} \begin{split}
       \left\langle X,\nabla^{TM}_{U_i}V_i + \nabla^{TM}_{ V_i}U_i\right\rangle =0,\\
\left\langle \nabla^{TM}_XU_2 ,U_1\right\rangle+\left\langle
X,\nabla^{TM}_{U_2}U_1\right\rangle =0.\end{split}
\end{align}

In this paper, for simplicity, we also make the following assumption.
This assumption holds by the Connes   fibration to be dealt with in the next section.

\begin{defn}\label{t1.1c}  We call an almost isometric foliation as above verifies Condition ({C}) if  $F_2^\perp$ is also integrable. That is, for any $U_2,\, V_2\in\Gamma(F_2^\perp)$, one has
\begin{align}\label{1.6a}
\left[U_2,V_2\right]\in \Gamma\left(F_2^\perp\right).
\end{align}
\end{defn}

\subsection{Adiabatic limit and the scalar curvature
}\label{s1.2} It has been shown in \cite[Proposition A.2]{LZ01} that
an almost isometric foliation in the sense of Definition \ref{t1.1} is an almost Riemannian foliation in
the sense of \cite[Definition 2.1]{LZ01}. Thus many computations in
what follows are contained implicitly in  \cite{LZ01} (see also
\cite{LW09}).


For convenience, we   recall the standard formula for the
Levi-Civita connection that for any $X,\ Y,\ Z\in\Gamma(TM)$,
\begin{multline}\label{1.7}
2\left\langle \nabla^{TM}_XY,Z\right\rangle =X\langle Y,Z\rangle
+Y\langle X,Z\rangle-Z\langle X,Y\rangle\\
+\langle [X,Y],Z\rangle -\langle [X,Z],Y\rangle-\langle
[Y,Z],X\rangle.
\end{multline}

For any $\beta,\ \varepsilon>0$, let
$g_{\beta,\varepsilon}^{TM}$ be the rescaled  Riemannian metric on
$TM$ defined by
\begin{align}\label{1.8}
g^{TM}_{\beta,\varepsilon }= \beta^2 g^{F}\oplus \frac{1}{
\varepsilon^{2}} g^{F^\perp_1}\oplus    g^{F^\perp_2}.
\end{align}
We will always assume that $0<\beta,\ \varepsilon\leq 1$.
We will use the subscripts and/or superscripts ``$\beta$, $\varepsilon$'' to
decorate the geometric data associated to $g^{TM}_{\beta,\varepsilon }$.
For example, $\nabla^{TM,\beta,\varepsilon}$ will denote the Levi-Civita
connection associated to $g^{TM}_{\beta,\varepsilon} $. When the
corresponding notation does not involve ``$\beta,\ \varepsilon$'', we will
mean that it corresponds to the case of $\beta=\varepsilon=1$.

Let $p$, $p_1^\perp$, $p_2^\perp$ be the orthogonal projections from
$TM$ to $F$, $F_1^\perp$, $F^\perp_2$ with respect to the orthogonal
splitting (\ref{1.4}).
Let $\nabla^{F,\beta,\varepsilon}$, $\nabla^{F_1^\perp,\beta,\varepsilon}$,
$\nabla^{F_2^\perp,\beta,\varepsilon}$ be the Euclidean connections on
$F$, $F_1^\perp$, $F^\perp_2$ defined by
\begin{align}\label{1.9}
\nabla^{F,\beta,\varepsilon}=p \nabla^{TM,\beta,\varepsilon}p,\ \
\nabla^{F_1^\perp,\beta,\varepsilon}=p_1^\perp
\nabla^{TM,\beta,\varepsilon}p_1^\perp,\ \
\nabla^{F_2^\perp,\beta,\varepsilon}=p_2^\perp\nabla^{TM,\beta,\varepsilon}p_2^\perp.
\end{align}
In particular, one has
\begin{align}\label{1.9a}
\nabla^{F }=p \nabla^{TM }p,\ \
\nabla^{F_1^\perp }=p_1^\perp
\nabla^{TM }p_1^\perp,\ \
\nabla^{F_2^\perp }=p_2^\perp\nabla^{TM }p_2^\perp.
\end{align}

By (\ref{1.7})-(\ref{1.9a}) and the integrability of $F$, the
following identities hold for $X\in\Gamma(F)$:
\begin{align}\label{1.10}
\nabla^{F,\beta,\varepsilon}=\nabla^F ,\ \ p
\nabla^{TM,\beta,\varepsilon}_Xp_i^\perp=p \nabla^{TM }_Xp_i^\perp,\ \
i=1,\ 2,
\end{align}
$$p_1^\perp \nabla^{TM,\beta,\varepsilon}_X p=\beta^{2 }\varepsilon^{2} p_1^\perp \nabla^{TM }_X p,\ \ \
 p_2^\perp \nabla^{TM,\beta,\varepsilon}_X p=\beta^2   p_2^\perp \nabla^{TM }_X p. $$

 From (\ref{1.5})-(\ref{1.8}), we deduce that for
 $X\in\Gamma(F)$, $U_i,\, V_i\in\Gamma(F^\perp_i)$, $i=1,\, 2$,
\begin{align}\label{1.11}
 \left\langle \nabla^{TM,\beta,\varepsilon}_{U_1}V_1,X\right\rangle = \left\langle \nabla^{TM
 }_{U_1}V_1,X\right\rangle= \frac{1}{2 }  \left\langle \left[{U_1},V_1\right],X\right\rangle,
\end{align}
while
\begin{align}\label{1.11a}
 \left\langle \nabla^{TM,\beta,\varepsilon}_{U_2}V_2,X\right\rangle = \left\langle \nabla^{TM
 }_{U_2}V_2,X\right\rangle= \frac{1}{2 }  \left\langle \left[{U_2},V_2\right],X\right\rangle=0.
\end{align}
Equivalently, for any $U_i\in \Gamma(F^\perp_i)$, $i=1,\, 2$,
\begin{align}\label{1.12}
 p_1^\perp\nabla^{TM,\beta,\varepsilon}_{U_1}p= {\beta^{2 }\varepsilon^{2}}  p_1^\perp\nabla^{TM
 }_{U_1}p,\ \ \ p_2^\perp\nabla^{TM,\beta,\varepsilon}_{U_2}p=0.
\end{align}
Similarly, one verifies that
\begin{align}\label{1.13}
  \left\langle \nabla^{TM,\beta,\varepsilon}_{U_1}X,U_2\right\rangle
  =\frac{1}{2} \left\langle[U_1,X],U_2\right\rangle
  -\frac{\beta^2 }{2}\left\langle[U_1,U_2],X\right\rangle,
\end{align}
$$\left\langle \nabla^{TM,\beta,\varepsilon}_{U_2}X,U_1\right\rangle
  =\frac{\varepsilon^{2 }}{2 } \left\langle[U_1,X],U_2\right\rangle
  +\frac{\beta^{2 }\varepsilon^{2}}{2}\left\langle[U_1,U_2],X\right\rangle .$$

 For   convenience  of the  later computations, we collect the
 asymptotic behavior of various
covariant derivatives in the following lemma. These formulas can
be derived by applying (\ref{1.5})-(\ref{1.8}). The inner products
appear in the   lemma correspond to $\beta=\varepsilon=1$.

\begin{lemma}\label{tf} The following formulas hold for $X,\,
Y,\,Z\in \Gamma(F)$, $U_i,\,V_i,\, W_i\in \Gamma(F_i^\perp)$ with
$i=1,\, 2$, when $\beta>0$, $\varepsilon>0$ are small,
\begin{align}\label{f1}
 \left\langle\nabla^{TM,\beta,\varepsilon}_XY,Z\right\rangle  =O(1)
 , \ \
 \left\langle\nabla^{TM,\beta,\varepsilon}_XY,U_1\right\rangle=O\left(\beta^{2 }\varepsilon^{2}\right),\ \
\left\langle\nabla^{TM,\beta,\varepsilon}_XY,U_2\right\rangle=O\left(\beta^2\right),
\end{align}
\begin{align}\label{f2}
 \left\langle\nabla^{TM,\beta,\varepsilon}_XU_1,Y\right\rangle  =O\left(1\right)
 , \ \
 \left\langle\nabla^{TM,\beta,\varepsilon}_XU_1,V_1\right\rangle =O\left(1 \right),\ \
\left\langle\nabla^{TM,\beta,\varepsilon}_XU_1,U_2\right\rangle
=O\left(1\right),
\end{align}
\begin{align}\label{f3}
 \left\langle\nabla^{TM,\beta,\varepsilon}_XU_2,Y\right\rangle  =O\left(1\right)
 , \ \
 \left\langle\nabla^{TM,\beta,\varepsilon}_XU_2,U_1\right\rangle =O\left(  {\varepsilon^{2} } \right),\ \
\left\langle\nabla^{TM,\beta,\varepsilon}_XU_2,V_2\right\rangle =O\left(
{1} \right),
\end{align}
\begin{align}\label{f4}
 \left\langle\nabla^{TM,\beta,\varepsilon}_{U_1}X,Y\right\rangle  =O\left(1\right)
 , \ \
 \left\langle\nabla^{TM,\beta,\varepsilon}_{U_1}X,V_1\right\rangle =O\left( {\beta^{ 2}\varepsilon^{2}} \right),\ \
\left\langle\nabla^{TM,\beta,\varepsilon}_{U_1}X,U_2\right\rangle =O\left(
{1} \right),
\end{align}
\begin{align}\label{f5}
 \left\langle\nabla^{TM,\beta,\varepsilon}_{U_1}V_1,X\right\rangle  =O\left( {1} \right)
 , \ \
 \left\langle\nabla^{TM,\beta,\varepsilon}_{U_1}V_1,W_1\right\rangle =O\left( {1} \right),\ \
\left\langle\nabla^{TM,\beta,\varepsilon}_{U_1}V_1,U_2\right\rangle
=O\left(\frac{1}{\varepsilon^{2}}\right),
\end{align}
\begin{align}\label{f6}
 \left\langle\nabla^{TM,\beta,\varepsilon}_{U_1}U_2,X\right\rangle  =O\left(\frac{1}{\beta^2}\right)
 , \ \
 \left\langle\nabla^{TM,\beta,\varepsilon}_{U_1}U_2,V_1\right\rangle =O\left( {1}  \right),\ \
\left\langle\nabla^{TM,\beta,\varepsilon}_{U_1}U_2,V_2\right\rangle
=O\left(1\right),
\end{align}
\begin{align}\label{f7}
 \left\langle\nabla^{TM,\beta,\varepsilon}_{U_2}X,Y\right\rangle =O\left( {1} \right)
 , \ \
 \left\langle\nabla^{TM,\beta,\varepsilon}_{U_2}X,U_1\right\rangle =O\left( { {\varepsilon^{2}} }
\right),\ \
\left\langle\nabla^{TM,\beta,\varepsilon}_{U_2}X,V_2\right\rangle
=0,
\end{align}
\begin{align}\label{f8}
 \left\langle\nabla^{TM,\beta,\varepsilon}_{U_2}U_1,X\right\rangle  =O\left( \frac{1}{\beta^2} \right)
 , \ \
 \left\langle\nabla^{TM,\beta,\varepsilon}_{U_2}U_1,V_1\right\rangle =O\left( {1}  \right),\ \
\left\langle\nabla^{TM,\beta,\varepsilon}_{U_2}U_1,V_2\right\rangle
=O\left(1\right),
\end{align}
\begin{align}\label{f9}
 \left\langle\nabla^{TM,\beta,\varepsilon}_{U_2}V_2,X\right\rangle  =0
 , \ \
 \left\langle\nabla^{TM,\beta,\varepsilon}_{U_2}V_2,U_1\right\rangle =O\left( {\varepsilon^2}  \right),\ \
\left\langle\nabla^{TM,\beta,\varepsilon}_{U_2}V_2,W_2\right\rangle
=O\left( {1} \right).
\end{align}
\end{lemma}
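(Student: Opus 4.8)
The plan is to derive all nine families of estimates by a single, uniform mechanism: expand each covariant derivative via the Koszul formula \eqref{1.7} applied to the rescaled metric $g^{TM}_{\beta,\varepsilon}$, then track how the three rescaling factors $\beta^2$ (on $g^F$), $\varepsilon^{-2}$ (on $g^{F_1^\perp}$), and $1$ (on $g^{F_2^\perp}$) propagate through each term. Concretely, to compute $\langle \nabla^{TM,\beta,\varepsilon}_A B, C\rangle$ with $A,B,C$ each lying in one of $F$, $F_1^\perp$, $F_2^\perp$, I would write $2\langle \nabla^{TM,\beta,\varepsilon}_A B, C\rangle_{\beta,\varepsilon}$ using \eqref{1.7} for $g^{TM}_{\beta,\varepsilon}$; every summand is either a directional derivative of an inner product $\langle\cdot,\cdot\rangle_{\beta,\varepsilon}$ or a bracket term $\langle[\cdot,\cdot],\cdot\rangle_{\beta,\varepsilon}$. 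Since the rescaled inner product of two sections of $F_i^\perp$ (or of $F$) is just a constant multiple of the unscaled one, and the Lie bracket is metric-independent, each summand equals a fixed power $\beta^{2a}\varepsilon^{2b}$ times an unscaled quantity that is $O(1)$. Dividing by the factor $2\langle C,C\rangle_{\beta,\varepsilon}$-normalization (i.e., reinterpreting the left side through the unscaled pairing $\langle\cdot,\cdot\rangle$, as the lemma stipulates) produces the claimed order. The only subtlety is that brackets $[U_i,V_j]$ need not lie in a single summand, so one must expand $[U_i,V_j]$ into its $F$-, $F_1^\perp$-, $F_2^\perp$-components before reading off the scaling; this is where the almost-isometric identities \eqref{1.5}, their equivalents \eqref{1.6}, Condition (C) in \eqref{1.6a}, and the integrability of $F$ in \eqref{1.0} get used to kill or bound the dangerous components.

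First I would record the basic dictionary: for $X\in\Gamma(F)$, $\langle X,X\rangle_{\beta,\varepsilon}=\beta^2\langle X,X\rangle$; for $U_1\in\Gamma(F_1^\perp)$, $\langle U_1,U_1\rangle_{\beta,\varepsilon}=\varepsilon^{-2}\langle U_1,U_1\rangle$; for $U_2\in\Gamma(F_2^\perp)$, $\langle U_2,U_2\rangle_{\beta,\varepsilon}=\langle U_2,U_2\rangle$; and mixed pairings vanish. Then I would note the already-established consequences \eqref{1.10}--\eqref{1.13}, which are exactly the $\beta,\varepsilon$-exact (not merely asymptotic) versions of several of the nine lines, and observe that most entries of Lemma \ref{tf} follow by rereading those displays: e.g.\ \eqref{f1} restates \eqref{1.10}, the first two entries of \eqref{f4} restate \eqref{1.12}, \eqref{f7} restates \eqref{1.13} combined with \eqref{1.11a}, and so on. For the entries not directly covered — principally the third entries of \eqref{f2}, \eqref{f3}, \eqref{f4}, the full lines \eqref{f5}, \eqref{f6}, \eqref{f8}, \eqref{f9} involving two transverse inputs — I would apply the Koszul expansion termwise. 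For instance, for $\langle \nabla^{TM,\beta,\varepsilon}_{U_1}V_1, U_2\rangle$ in \eqref{f5}, the Koszul formula gives terms $U_1\langle V_1,U_2\rangle_{\beta,\varepsilon}$, $V_1\langle U_1,U_2\rangle_{\beta,\varepsilon}$ (both zero by orthogonality of $F_1^\perp,F_2^\perp$), $-U_2\langle U_1,V_1\rangle_{\beta,\varepsilon}=-\varepsilon^{-2}U_2\langle U_1,V_1\rangle=O(\varepsilon^{-2})$, plus bracket terms $\langle[U_1,V_1],U_2\rangle_{\beta,\varepsilon}=O(1)$, $-\langle[U_1,U_2],V_1\rangle_{\beta,\varepsilon}=-\varepsilon^{-2}\langle[U_1,U_2],V_1\rangle=O(\varepsilon^{-2})$, $-\langle[V_1,U_2],U_1\rangle_{\beta,\varepsilon}=O(\varepsilon^{-2})$; all divided by the normalization, and since $\langle\cdot,\cdot\rangle$ here means the $\beta=\varepsilon=1$ pairing, the dominant $\varepsilon^{-2}$ survives, matching the stated $O(\varepsilon^{-2})$. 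The remaining entries are handled identically, each time using \eqref{1.5}/\eqref{1.6}/\eqref{1.6a} to see that any bracket component that would produce an inadmissibly large power (e.g.\ an $F$-component of $[U_2,V_2]$, which \eqref{1.11a} forces to vanish) is in fact absent.

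The main obstacle — really the only place where anything more than bookkeeping is needed — is making sure the sign/scaling bookkeeping across the asymmetric roles of $F_1^\perp$ and $F_2^\perp$ is done consistently, and in particular verifying the two "large" entries $\langle\nabla^{TM,\beta,\varepsilon}_{U_1}U_2,X\rangle=O(\beta^{-2})$ in \eqref{f6} and $\langle\nabla^{TM,\beta,\varepsilon}_{U_2}U_1,X\rangle=O(\beta^{-2})$ in \eqref{f8}. Here the Koszul expansion produces $-X\langle U_1,U_2\rangle_{\beta,\varepsilon}=0$ and various bracket terms; the surviving one is $\langle[U_2,U_1],X\rangle_{\beta,\varepsilon}$-type paired against an $X$ with $\langle X,X\rangle_{\beta,\varepsilon}=\beta^2\langle X,X\rangle$, so after renormalizing to the unscaled pairing one gains a factor $\beta^{-2}$ — and one must check, using \eqref{1.5}--\eqref{1.6a}, that no term is worse than $\beta^{-2}$ (in particular that any $\varepsilon^{-2}$ coming from an $F_1^\perp$-leg is always accompanied by a compensating $\beta^2\varepsilon^2$ from \eqref{1.12}/\eqref{1.13}, which is precisely what the almost-isometric condition buys). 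Once this is checked for \eqref{f6} and \eqref{f8}, the complementary estimates \eqref{f5}, \eqref{f9} and the mixed $F$-leg estimates follow by the same computation read in a different order, completing the proof.
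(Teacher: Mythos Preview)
Your proposal is correct and is exactly the approach the paper itself indicates: the paper gives no detailed proof of Lemma~\ref{tf}, stating only that ``these formulas can be derived by applying (\ref{1.5})--(\ref{1.8}),'' i.e., the Koszul formula for $g^{TM}_{\beta,\varepsilon}$ together with the almost-isometric identities and Condition~(C). Your termwise Koszul expansion with scaling bookkeeping, the reuse of (\ref{1.10})--(\ref{1.13}) for the already-computed entries, and your identification of the $\beta^{-2}$ mechanism in \eqref{f6}/\eqref{f8} are precisely what is required.
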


In what follows, when we compute the asymptotics of various covariant derivatives,  
we will simply use the above asymptotic formulas freely without further notice.

  Let $R^{TM,\beta,\varepsilon}=(\nabla^{TM,\beta,\varepsilon})^2$ be the curvature of
  $\nabla^{TM,\beta,\varepsilon}$. Then for any $X,\ Y\in\Gamma(TM)$, one has
  the following standard formula,
\begin{align}\label{1.14}
   R^{TM,\beta,\varepsilon}(X,Y)=\nabla^{TM,\beta,\varepsilon}_X\nabla^{TM,\beta,\varepsilon}_Y-\nabla^{TM,\beta,\varepsilon}_Y\nabla^{TM,\beta,\varepsilon}_X-\nabla^{TM,\beta,\varepsilon}_{[X,Y]}.
\end{align}
Let $R^F=(\nabla^F)^2$ be the curvature of $\nabla^F$.
Let $k^{TM,\beta,\varepsilon}$, $k^F$ denote the scalar curvature
of $g^{TM,\beta,\varepsilon}$, $g^F$ respectively. Recall that
$k^F$ is defined in (\ref{001}). The following formula for $k^F$
is obvious,
\begin{align}\label{1.23aa}
 k^F=- \sum_{i,\, j=1}^{{\rm rk}(F)}\left\langle R^F\left(f_i,f_j\right)f_i,f_j\right\rangle,
\end{align}
where $f_i$, $i=1,\,\cdots,\, {\rm rk}(F)$, is an orthonormal basis of $F$.  Clearly, when $F=TM$, it reduces to the usual definition of the
scalar curvature $k^{TM}$ of $g^{TM}$.

\begin{prop}\label{t12.1}  If Condition (C) holds, then   when $\beta>0$, $\varepsilon>0$ are small, the following formula holds uniformly on any compact subset of $M$,
\begin{align}\label{1.24}
k^{TM,\beta,\varepsilon}=\frac{k^F }{\beta^2}+O\left(1+\frac{\varepsilon^2}{\beta^2}\right).
\end{align}
\end{prop}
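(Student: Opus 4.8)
The plan is to compute the scalar curvature $k^{TM,\beta,\varepsilon}$ directly from the Gauss-type expansion, keeping careful track of powers of $\beta$ and $\varepsilon$ using the asymptotics collected in Lemma \ref{tf}. Fix a local orthonormal frame for $g^{TM}_{\beta,\varepsilon}$ adapted to the splitting \eqref{1.4}: one takes an orthonormal basis $f_1,\dots,f_{{\rm rk}(F)}$ of $(F,g^F)$ and rescales it to $\frac{1}{\beta}f_a$ to get an orthonormal basis for $g^{TM}_{\beta,\varepsilon}|_F$; similarly one rescales bases of $F_1^\perp$ by $\varepsilon$ and uses an unchanged basis of $F_2^\perp$. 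Then
\begin{align}\label{plan1}
k^{TM,\beta,\varepsilon}=-\sum_{\alpha,\gamma}\left\langle R^{TM,\beta,\varepsilon}(e_\alpha,e_\gamma)e_\alpha,e_\gamma\right\rangle_{\beta,\varepsilon},
\end{align}
where $\{e_\alpha\}$ runs over this adapted frame and $\langle\cdot,\cdot\rangle_{\beta,\varepsilon}$ is the rescaled metric. I would split the sum according to which of the three subbundles each index lives in, getting the six types of terms $FF$, $FF_1^\perp$, $FF_2^\perp$, $F_1^\perp F_1^\perp$, $F_1^\perp F_2^\perp$, $F_2^\perp F_2^\perp$.

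The key claim is that the purely-$F$ block produces exactly $k^F/\beta^2$ and all remaining blocks are $O(1+\varepsilon^2/\beta^2)$. For the $FF$ block, using \eqref{1.10} (which says $\nabla^{F,\beta,\varepsilon}=\nabla^F$ and that the mixed components $p\nabla^{TM,\beta,\varepsilon}_Xp_i^\perp$ do not rescale on $F$-directions) one shows that restricted to the leaf the relevant curvature components agree with those of $g^F$ up to terms governed by the second fundamental form, which by \eqref{1.11}, \eqref{1.11a} and Lemma \ref{tf} are $O(1)$ or better relative to the leading $k^F$. Since each of the two $F$-indices in \eqref{plan1} contributes a factor $\beta^{-1}$ from the rescaled basis vector $\frac1\beta f_a$ but the connection terms along $F$ are $O(1)$, the $FF$ block equals $\beta^{-2}k^F+O(1)$ by \eqref{1.23aa}. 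For the remaining blocks I would expand each sectional-curvature term via \eqref{1.14}, substitute the estimates \eqref{f1}--\eqref{f9}, and multiply by the appropriate powers of $\beta$ and $\varepsilon$ coming from the rescaled basis vectors; Condition (C), i.e. the integrability \eqref{1.6a} of $F_2^\perp$, is used precisely to kill the would-be dangerous terms in the $F_1^\perp F_2^\perp$ and $F_2^\perp F_2^\perp$ blocks (note for instance the $=0$ entries in \eqref{f7} and \eqref{f9}). The outcome of this bookkeeping is that the worst surviving contributions are of size $O(1)$ and $O(\varepsilon^2/\beta^2)$, the latter arising from $F_1^\perp$-type curvature terms that carry two factors of $\varepsilon$ against a $\beta^{-2}$.

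The main obstacle is the bookkeeping in the $F_1^\perp F_1^\perp$ and $F_1^\perp F_2^\perp$ blocks: here the covariant derivatives can a priori blow up like $\varepsilon^{-2}$ (see the third entry of \eqref{f5} and the first entries of \eqref{f6}, \eqref{f8}), so one must check that after pairing two such terms in a curvature expression and inserting the basis rescalings, the dangerous negative powers of $\varepsilon$ either cancel against positive powers from the basis vectors or combine only into the admissible $\varepsilon^2/\beta^2$ term, never into something like $\varepsilon^{-2}$. This requires being careful that e.g. a term $\langle\nabla_{U_1}\nabla_{U_1}U_2,U_2\rangle$-type expression, where the basis vectors contribute $\varepsilon^2$, meets connection factors no worse than $\varepsilon^{-2}$ after using \eqref{f5}--\eqref{f6} twice, and genuinely telescopes; the integrability hypotheses \eqref{1.0} and \eqref{1.6a} together with the infinitesimal almost-isometry identities \eqref{1.5}--\eqref{1.6} are what make these cancellations happen. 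Once this is verified term by term, \eqref{1.24} follows, with uniformity on compact subsets being automatic since all the implied constants in Lemma \ref{tf} are locally uniform.
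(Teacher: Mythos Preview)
Your proposal is correct and follows essentially the same approach as the paper: a block-by-block computation of the sectional curvatures $\langle R^{TM,\beta,\varepsilon}(X,Y)X,Y\rangle$ according to the splitting \eqref{1.4}, using the asymptotics of Lemma~\ref{tf} and the curvature formula \eqref{1.14}, with Condition~(C) entering through \eqref{1.11a} and the vanishing entries in \eqref{f7}, \eqref{f9}. The paper carries out exactly the bookkeeping you anticipate in \eqref{1.15}--\eqref{1.23a}, and your worry about the $F_1^\perp F_1^\perp$ and $F_1^\perp F_2^\perp$ blocks is resolved there just as you predict: the $\varepsilon^{-2}$ in \eqref{f5}, \eqref{f6}, \eqref{f8} is compensated by the $\varepsilon^2$ from the rescaled basis, leaving $O(1)$ and $O(1+\varepsilon^2/\beta^2)$ respectively.
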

\begin{proof}
 By (\ref{1.0}),  (\ref{1.10}),
(\ref{1.14}) and Lemma \ref{tf}, one
deduces that when $\beta>0$, $\varepsilon>0$ are very small, for any $X,\
Y\in\Gamma(F)$, one has
\begin{multline}\label{1.15}
\left\langle
R^{TM,\beta,\varepsilon}(X,Y)X,Y\right\rangle=\left\langle\nabla^{TM,\beta,\varepsilon}_X
\left(p+p_1^\perp+p_2^\perp\right)\nabla^{TM,\beta,\varepsilon}_YX,Y\right\rangle
\\
-\left\langle\nabla^{TM,\beta,\varepsilon}_Y\left(p+p_1^\perp+p_2^\perp\right)
\nabla^{TM,\beta,\varepsilon}_XX,Y\right\rangle-\left\langle
\nabla^{TM,\beta,\varepsilon}_{[X,Y]}X,Y\right\rangle\\
=\left\langle R^F(X,Y)X,Y\right\rangle
-\beta^{2 }\varepsilon^{2}\left\langle p_1^\perp
\nabla^{TM}_YX,\nabla^{TM}_XY\right\rangle
-\beta^2\left\langle p_2^\perp
\nabla^{TM}_YX,\nabla^{TM}_XY\right\rangle\\
+\beta^{2 }\varepsilon^{2}\left\langle p_1^\perp
\nabla^{TM}_XX,\nabla^{TM}_YY\right\rangle+\beta^2\left\langle
p_2^\perp \nabla^{TM}_XX,\nabla^{TM}_YY\right\rangle\\
=\left\langle R^F(X,Y)X,Y\right\rangle
+O\left( \beta^2\right).
\end{multline}

For $X\in\Gamma(F),\ U\in\Gamma(F_1^\perp)$, by
(\ref{1.5})-(\ref{1.14}), one finds that when $\beta , \ \varepsilon>0$ are
  small,
\begin{multline}\label{1.16}
\left\langle
R^{TM,\beta,\varepsilon}(X,U)X,U\right\rangle=\left\langle\nabla^{TM,\beta,\varepsilon}_X
\left(p+p_1^\perp+p_2^\perp\right)\nabla^{TM,\beta,\varepsilon}_UX,U\right\rangle
\\
-\left\langle\nabla^{TM,\beta,\varepsilon}_U
\left(p+p_1^\perp+p_2^\perp\right)\nabla^{TM,\beta,\varepsilon}_XX,U\right\rangle-\left\langle
\nabla^{TM,\beta,\varepsilon}_{\left(p+p_1^\perp+p_2^\perp\right)[X,U]}X,U\right\rangle\end{multline}
$$
=\beta^{2 }\varepsilon^{2}\left\langle\nabla^{TM}_Xp\nabla^{TM}_UX,U\right\rangle+\beta^{2 }\varepsilon^{2}
\left\langle\nabla^{TM,\beta,\varepsilon}_Xp_1^\perp
\nabla^{TM}_UX,U\right\rangle - {\varepsilon^{2}} \left\langle
p_2^\perp\nabla^{TM,\beta,\varepsilon}_UX,
\nabla^{TM,\beta,\varepsilon}_XU\right\rangle
$$
$$
-\beta^{2 }\varepsilon^{2}\left\langle\nabla^{TM}_Up\nabla^{TM}_XX,U\right\rangle-
\beta^{2 }\varepsilon^{2}
\left\langle\nabla^{TM,\beta,\varepsilon}_Up_1^\perp\nabla^{TM}_XX,U\right\rangle
+ {\varepsilon^{2}} \left\langle
p_2^\perp\nabla^{TM,\beta,\varepsilon}_XX,\nabla^{TM,\beta,\varepsilon}_UU\right\rangle
$$
$$
-\beta^{2 }\varepsilon^{2}\left\langle\nabla^{TM}_{\left(p+p_1^\perp\right)[X,U]}X,U\right\rangle - \left\langle\nabla^{TM,\beta,\varepsilon}_{ p_2^\perp [X,U]}X,U\right\rangle
= O\left(\beta^2+  \varepsilon^{2} \right).$$

Similarly,  for $X\in\Gamma(F)$,
$U\in\Gamma(F_2^\perp)$, one has that when $\beta>0$, $\varepsilon>0$ are small,
\begin{multline}\label{1.18}
\left\langle
R^{TM,\beta,\varepsilon}(X,U)X,U\right\rangle=\left\langle\nabla^{TM,\beta,\varepsilon}_X
\left(p+p_1^\perp+p_2^\perp\right)\nabla^{TM,\beta,\varepsilon}_UX,U\right\rangle
\\
-\left\langle\nabla^{TM,\beta,\varepsilon}_U
\left(p+p_1^\perp+p_2^\perp\right)\nabla^{TM,\beta,\varepsilon}_XX,U\right\rangle-\left\langle
\nabla^{TM,\beta,\varepsilon}_{\left(p+p_1^\perp+p_2^\perp\right)[X,U]}X,U\right\rangle\\
=\beta^2 \left\langle\nabla^{TM}_Xp\nabla^{TM}_UX,U\right\rangle-\frac{1}{\varepsilon^{2}}\left\langle
p_1^\perp \nabla^{TM,\beta,\varepsilon}_UX,
\nabla^{TM,\beta,\varepsilon}_XU\right\rangle+\beta^2\left\langle
\nabla^{TM,\beta,\varepsilon}_X
p_2^\perp\nabla^{TM}_UX,U\right\rangle
\end{multline}
$$
-\beta^2\left\langle\nabla^{TM}_Up\nabla^{TM}_XX,U\right\rangle- {\beta^{2 }}{\varepsilon^2}
\left\langle\nabla^{TM,\beta,\varepsilon}_Up_1^\perp\nabla^{TM}_XX,U\right\rangle
-\beta^2
\left\langle\nabla^{TM,\beta,\varepsilon}_Up_2^\perp\nabla^{TM}_XX,U\right\rangle$$
$$-\beta^2\left\langle\nabla^{TM}_{p[X,U]}X,U\right\rangle
-\beta^2\left\langle\nabla^{TM
}_{p_2^\perp[X,U]}X,U\right\rangle
=
O\left(  {\beta^2+\varepsilon^2} \right).$$

 For $U,\ V\in\Gamma(F_1^\perp)$, one verifies   that
\begin{multline}\label{1.19}
\left\langle
R^{TM,\beta,\varepsilon}(U,V)U,V\right\rangle=\left\langle\nabla^{TM,\beta,\varepsilon}_U
\left(p+p_1^\perp+p_2^\perp\right)\nabla^{TM,\beta,\varepsilon}_VU,V\right\rangle
\\
-\left\langle\nabla^{TM,\beta,\varepsilon}_V\left(p+p_1^\perp+p_2^\perp\right)
\nabla^{TM,\beta,\varepsilon}_UU,V\right\rangle-\left\langle
\nabla^{TM,\beta,\varepsilon}_{\left(p+p_1^\perp+p_2^\perp\right)[U,V]}U,V\right\rangle
\end{multline}
$$
=\beta^2\varepsilon^{2}\left\langle\nabla^{TM
}_Up\nabla^{TM,\beta,\varepsilon}_VU,V\right\rangle+\left\langle\nabla^{TM
}_Up_1^\perp\nabla^{TM
}_VU,V\right\rangle-\varepsilon^{2}\left\langle
p_2^\perp\nabla^{TM,\beta,\varepsilon}_VU,\nabla^{TM,\beta,\varepsilon}_UV\right\rangle$$
$$
-\beta^2\varepsilon^{2}\left\langle\nabla^{TM
}_Vp\nabla^{TM,\beta,\varepsilon}_UU,V\right\rangle
-\left\langle\nabla^{TM }_Vp_1^\perp\nabla^{TM }_UU,V\right\rangle
+\varepsilon^{2}\left\langle
p_2^\perp\nabla^{TM,\beta,\varepsilon}_UU,\nabla^{TM,\beta,\varepsilon}_VV\right\rangle$$
$$
-\left\langle\nabla^{TM,\beta,\varepsilon}_{p[U,V]}U,V\right\rangle-\left\langle\nabla^{TM
}_{p_1^\perp[U,V]}U,V\right\rangle-
\left\langle\nabla^{TM,\beta,\varepsilon}_{p_2^\perp[U,V]}U,V\right\rangle$$
$$
=-\varepsilon^{2}\left\langle
p_2^\perp\nabla^{TM,\beta,\varepsilon}_VU,\nabla^{TM,\beta,\varepsilon}_UV\right\rangle
+\varepsilon^{2}\left\langle
p_2^\perp\nabla^{TM,\beta,\varepsilon}_UU,\nabla^{TM,\beta,\varepsilon}_VV\right\rangle
 +O\left(1\right) =O\left(\frac{1}{\varepsilon^{2}}\right),$$
from which one gets that when $\beta>0$, $\varepsilon>0$ are
small,
\begin{align}\label{1.20}
\varepsilon^{2}\left\langle
R^{TM,\beta,\varepsilon}(U,V)U,V\right\rangle=O\left(1\right).
\end{align}

For $U,\ V\in \Gamma(F_2^\perp)$, one verifies directly that
\begin{multline}\label{1.21}
\left\langle
R^{TM,\beta,\varepsilon}(U,V)U,V\right\rangle=\left\langle\nabla^{TM,\beta,\varepsilon}_U\left(p+p_1^\perp+p_2^\perp\right)\nabla^{TM,\beta,\varepsilon}_VU,V\right\rangle
\\
-\left\langle\nabla^{TM,\beta,\varepsilon}_V\left(p+p_1^\perp+p_2^\perp\right)
\nabla^{TM,\beta,\varepsilon}_UU,V\right\rangle-\left\langle
\nabla^{TM,\beta,\varepsilon}_{ [U,V]}U,V\right\rangle\\
=\beta^2\left\langle\nabla^{TM
}_Up\nabla^{TM,\beta, \varepsilon}_VU,V\right\rangle-\frac{1}{\varepsilon^{2}}\left\langle
p_1^\perp\nabla^{TM,\beta,\varepsilon }_VU,\nabla^{TM,\beta,\varepsilon
}_UV\right\rangle+ \left\langle
\nabla^{TM }_Up_2^\perp\nabla^{TM }_VU,V\right\rangle\\
-\beta^2\left\langle\nabla^{TM
}_Vp\nabla^{TM,\beta,\varepsilon}_UU,V\right\rangle +
\frac{1}{\varepsilon^{2}}\left\langle
p_1^\perp\nabla^{TM,\beta,\varepsilon }_UU,\nabla^{TM,\beta,\varepsilon
}_VV\right\rangle -\left\langle
\nabla^{TM }_Vp_2^\perp\nabla^{TM }_UU,V\right\rangle\\
 -\left\langle
\nabla^{TM }_{ [U,V]}U,V\right\rangle=O(1).
\end{multline}

For $U\in \Gamma(F_1^\perp),\ V\in \Gamma(F_2^\perp)$, one verifies
directly that,
\begin{multline}\label{1.23}
\left\langle
R^{TM,\beta,\varepsilon}(U,V)U,V\right\rangle=\left\langle\nabla^{TM,\beta,\varepsilon}_U
\left(p+p_1^\perp+p_2^\perp\right)\nabla^{TM,\beta,\varepsilon}_VU,V\right\rangle
\\
-\left\langle\nabla^{TM,\beta,\varepsilon}_V\left(p+p_1^\perp+p_2^\perp\right)
\nabla^{TM,\beta,\varepsilon}_UU,V\right\rangle-\left\langle
\nabla^{TM,\beta,\varepsilon}_{ [U,V]}U,V\right\rangle\\
= -\beta^2\left\langle
p\nabla^{TM,\beta,\varepsilon}_VU,\nabla^{TM,\beta,\varepsilon}_UV\right\rangle-\frac{1}{\varepsilon^{2}}\left\langle
p_1^\perp\nabla^{TM,\beta,\varepsilon}_VU,\nabla^{TM,\beta,\varepsilon}_UV\right\rangle
+\left\langle\nabla^{TM,\beta,\varepsilon}_U p_2^\perp
\nabla^{TM,\beta,\varepsilon}_VU,V\right\rangle\\
+\beta^2\left\langle
p\nabla^{TM,\beta,\varepsilon}_UU,\nabla^{TM,\beta,\varepsilon}_VV\right\rangle+\frac{1}{\varepsilon^{2}}\left\langle
p_1^\perp\nabla^{TM,\beta,\varepsilon}_UU,\nabla^{TM,\beta,\varepsilon}_VV\right\rangle
-\left\langle\nabla^{TM }_V p_2^\perp
\nabla^{TM,\beta,\varepsilon}_UU,V\right\rangle\\
+\frac{1}{\varepsilon^{2}}\left\langle
U,\nabla^{TM,\beta,\varepsilon}_{[U,V]}V\right\rangle =O\left(\frac{1}{\varepsilon^2}+\frac{1}{\beta^2}\right) ,
\end{multline}
 from which  one gets that when $\beta>0$, $\varepsilon>0$ are small,
\begin{align}\label{1.23a}
\varepsilon^{2}\left\langle
R^{TM,\beta,\varepsilon}(U,V)U,V\right\rangle= \left\langle
R^{TM,\beta,\varepsilon}(V,U)V,U\right\rangle= O\left(1+\frac{\varepsilon^2}{\beta^2}\right).
\end{align}

From  (\ref{1.23aa}), (\ref{1.15})-(\ref{1.18}), (\ref{1.20}),  (\ref{1.21}) and
(\ref{1.23a}), one gets  (\ref{1.24}).  \end{proof}

\subsection{Bott connections on $F_1^\perp$ and $F_2^\perp$
}\label{s1.3}

From (\ref{1.5}) and  (\ref{1.6a})-(\ref{1.9}), one verifies directly
that for $X\in\Gamma(F)$, $U_i,\, V_i\in\Gamma(F_i^\perp)$, $i=1,\,
2$, one has
\begin{align}\label{1.25} \left\langle\nabla^{F_1^\perp,\beta,
\varepsilon}_XU_1,V_1
\right\rangle=\left\langle\left[X,U_1\right],V_1\right\rangle-
\frac{\beta^2\varepsilon^{2}}{2}\left\langle\left[U_1,V_1\right],X\right\rangle
,
\end{align}
$$\left\langle\nabla^{F_2^\perp,\beta,\varepsilon}_XU_2,V_2
\right\rangle=\left\langle\left[X,U_2\right],V_2\right\rangle.
$$

By (\ref{1.25}), one has that for $X\in\Gamma(F)$, $U_i
\in\Gamma(F_i^\perp)$, $i=1,\, 2$,
\begin{align}\label{1.26}\lim_{\varepsilon\rightarrow 0^+} \nabla^{F_i^\perp,\beta,
\varepsilon}_XU_i
 =
 \widetilde{\nabla}^{F_i^\perp }_XU_i:=p_i^\perp \left[X,U_i\right]
.
\end{align}

Let $\widetilde{\nabla}^{F_i^\perp }$ be the connection on
$F_i^\perp$ defined  by the second equality in (\ref{1.26}) and by
$\widetilde{\nabla}^{F_i^\perp }_UU_i= {\nabla}^{F_i^\perp }_UU_i$
for $U\in \Gamma(F^\perp)=\Gamma(F_1^\perp\oplus F_2^\perp)$. In
view of (\ref{1.26}) and \cite{Bo70}, we   call
$\widetilde{\nabla}^{F_i^\perp }$ a Bott connection on $F_i^\perp$
for   $i=1$ or $ 2$. Let $\widetilde{R}^{F_i^\perp }$ denote the
curvature of $\widetilde{\nabla}^{F_i^\perp }$ for $i=1,\, 2$.

The following result holds without Condition (C). 

\begin{lemma}\label{t1.2} For $X,\, Y\in\Gamma(F)$ and $i=1,\, 2$, the following
identity holds,
\begin{align}\label{1.27}
 \widetilde{R}^{F_i^\perp }(X,Y)=0
.
\end{align}
\end{lemma}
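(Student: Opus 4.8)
The plan is to show that the Bott connection $\widetilde\nabla^{F_i^\perp}$ is \emph{flat in the $F$-directions} by exhibiting it, locally along a leaf, as the natural quotient connection induced from the Lie bracket action of $\Gamma(F)$ on $\Gamma(TM/F)$, which is itself flat because $F$ is integrable. Concretely, fix $X,Y\in\Gamma(F)$ and $U_i\in\Gamma(F_i^\perp)$. By the definition in $(\ref{1.26})$, $\widetilde\nabla^{F_i^\perp}_XU_i=p_i^\perp[X,U_i]$. The first step is to compute $\widetilde R^{F_i^\perp}(X,Y)U_i$ directly from the curvature formula $(\ref{1.14})$, i.e.
\begin{align*}
\widetilde R^{F_i^\perp}(X,Y)U_i=\widetilde\nabla^{F_i^\perp}_X\widetilde\nabla^{F_i^\perp}_YU_i-\widetilde\nabla^{F_i^\perp}_Y\widetilde\nabla^{F_i^\perp}_XU_i-\widetilde\nabla^{F_i^\perp}_{[X,Y]}U_i,
\end{align*}
noting that $[X,Y]\in\Gamma(F)$ by $(\ref{1.0})$, so the last term is again of the form $p_i^\perp[[X,Y],U_i]$.

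The heart of the matter is then to unwind $\widetilde\nabla^{F_i^\perp}_X\widetilde\nabla^{F_i^\perp}_YU_i=p_i^\perp[X,\,p_i^\perp[Y,U_i]]$. Write $[Y,U_i]=p_i^\perp[Y,U_i]+(\text{terms in }F\oplus F_j^\perp)$ for $j\neq i$; I would argue that when I apply $p_i^\perp[X,\,\cdot\,]$ to the complementary terms, everything that survives is controlled and cancels in the antisymmetrization over $X,Y$. Specifically: the $F$-component of $[Y,U_i]$ lies in $\Gamma(F)$, and $p_i^\perp[X,\Gamma(F)]$ feeds back in a way that, together with the Jacobi identity, telescopes; the $F_j^\perp$-component requires either Condition (C) (for $i=2$) or the almost-isometric identities — but since the lemma is asserted to hold \emph{without} Condition (C), the cleaner route is to observe that modulo $\Gamma(F)$ the assignment $U_i\mapsto p_i^\perp[X,U_i]$ is exactly the action of $\mathrm{ad}_X$ on the quotient, and the Jacobi identity for $X,Y,U_i$ gives $[X,[Y,U_i]]-[Y,[X,U_i]]=[[X,Y],U_i]$ identically in $TM$; projecting this identity to $F_i^\perp$ and checking that the projections commute past $\mathrm{ad}_X,\mathrm{ad}_Y$ \emph{up to terms lying in $\Gamma(F)$, which project to zero anyway}, yields $(\ref{1.27})$.

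The step I expect to be the main obstacle is precisely the bookkeeping that $p_i^\perp[X,\,(\mathrm{Id}-p_i^\perp)[Y,U_i]\,]-(X\leftrightarrow Y)$ vanishes: one must verify that the "cross" contributions through $F$ and through $F_j^\perp$ are genuinely antisymmetric in $X,Y$ and hence killed. For the $F$-component this is a direct consequence of the Jacobi identity; for the $F_j^\perp$-component (relevant to $i=1$, $j=2$) one uses integrability of $F$ together with the structural identities $(\ref{1.5})$ — note these were derived from the almost-isometric condition, not from Condition (C), consistent with the claim "holds without Condition (C)." I would organize the computation so that after grouping, every surviving term is manifestly of the form $p_i^\perp$ of a Lie bracket of two sections of $F$ with $U_i$, and the Jacobi identity closes it. A clean alternative I would try first, to avoid the term-by-term chase: pass to a local foliation chart where $F=\mathrm{span}(\partial_{x^1},\dots,\partial_{x^k})$ and the holonomy/Bott connection becomes the trivial connection $d$ in the $x$-variables on the quotient bundle, making $\widetilde R^{F_i^\perp}(X,Y)=0$ transparent; one then only needs to check that the splitting-dependent correction terms built into $\widetilde\nabla^{F_i^\perp}$ (from the $\nabla^{F_i^\perp}_U U_i$ part and from $p_i^\perp$ versus $p^\perp$) do not contribute to the $(X,Y)$-curvature, which again reduces to integrability of $F$ plus $(\ref{1.5})$.
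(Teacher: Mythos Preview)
Your direct approach---curvature formula, Jacobi identity on the triple bracket, then disposal of the cross terms $p_i^\perp[X,(\mathrm{Id}-p_i^\perp)[Y,U_i]]$ via (\ref{1.5})---is exactly the paper's. One sharpening: the cross terms vanish \emph{individually}, not by antisymmetric cancellation---the $F$-part because $[X,\Gamma(F)]\subset\Gamma(F)$ by integrability so $p_i^\perp[X,p[Y,U_i]]=0$ outright, and the $F_j^\perp$-part because the second identity in (\ref{1.5}) gives $p_1^\perp[X,W]=0$ for any $W\in\Gamma(F_2^\perp)$ (handling $i=1$) and $p_1^\perp[Y,U_2]=0$ for $U_2\in\Gamma(F_2^\perp)$ (handling $i=2$, where the inner term already vanishes).
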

\begin{proof}We proceed as in \cite[Proof of Lemma 1.14]{Z01}. By
(\ref{1.26}) and the standard formula for the curvature (cf.
\cite[(1.3)]{Z01}), for any
$U\in\Gamma(F_i^\perp)$, $i=1,\, 2$,  one has,
\begin{multline}\label{1.28}
\widetilde{R}^{F_i^\perp }(X,Y)U=\widetilde{\nabla}^{F_i^\perp
}_X\widetilde{\nabla}^{F_i^\perp }_YU-\widetilde{\nabla}^{F_i^\perp
}_Y\widetilde{\nabla}^{F_i^\perp }_XU-\widetilde{\nabla}^{F_i^\perp
}_{[X,Y]}U\\
=p_i^\perp\big([X,[Y,U]]+[Y,[U,X]]+[U,[X,Y]]\big) -p_i^\perp
\left[X,\left({\rm Id}-p_i^\perp\right)[Y,U]\right]\\ -p_i^\perp
\left[Y,\left({\rm Id}-p_i^\perp\right)[U,X]\right]\\
=-p_i^\perp
\left[X,\left(p_1^\perp+p_2^\perp-p_i^\perp\right)[Y,U]\right]-p_i^\perp
\left[Y,\left(p_1^\perp+p_2^\perp-p_i^\perp\right)[U,X]\right],
\end{multline}
where the last equality follows from the Jacobi identity and the
integrability of $F$.

Now if $i=1$, then by (\ref{1.5}), one has $U\in\Gamma(F_1^\perp)$ and
\begin{align}\label{1.29}
p_1^\perp \left[X,  p_2^\perp  [Y,U]\right]=p_1^\perp \left[Y,
p_2^\perp  [U,X]\right]=0.
\end{align}
While if $i=2$, still by (\ref{1.5}), one has $U\in\Gamma(F_2^\perp)$ and
\begin{align}\label{1.30}
p_1^\perp[Y,U]=p_1^\perp[U,X]=0
 .
\end{align}

From (\ref{1.28})-(\ref{1.30}), one gets (\ref{1.27}). The proof of
Lemma  \ref{t1.2} is completed.
\end{proof}

\begin{rem}\label{t1.3} For $i=1,\, 2$,  let $ {R}^{F_i^\perp,\beta,\varepsilon }$ denote the
curvature of $ {\nabla}^{F_i^\perp,\beta,\varepsilon }$. From (\ref{1.25})-(\ref{1.27}), one finds that for any $X,\, Y\in\Gamma(F)$, when
$\beta>0$, $\varepsilon>0$ are small, the following identity holds:
\begin{align}\label{1.31}
 {R}^{F_i^\perp,\beta,\varepsilon }(X,Y)=O\left(\beta^2\varepsilon^2\right)
 .
\end{align}
\end{rem}

On the other hand, for $i=1,\, 2$, and $U_i,\, V_i,\, W_i,\,
Z_i\in\Gamma(F_i^\perp)$, by using (\ref{1.5}), (\ref{1.6a}),
(\ref{1.7}), (\ref{1.9}) and (\ref{1.14}), one verifies directly
that when $\beta>0$, $\varepsilon>0$ are small, the following identites, which will be used later, hold,
\begin{align}\label{1.101}
\beta^{-1}\varepsilon \left\langle
{R}^{F_1^\perp,\beta,\varepsilon }\left(X,U_1\right)V_1,W_1
\right\rangle  =O\left(\beta^{-1}\varepsilon\right)
 ,
\end{align}
\begin{align}\label{1.101a}
\beta^{-1}\left\langle {R}^{F_2^\perp,\beta,\varepsilon
}\left(X,U_2\right)V_2,W_2 \right\rangle
=O\left(\beta^{-1}\right)
 ,
\end{align}
\begin{align}\label{1.102}
\beta^{-1}\left\langle {R}^{F_1^\perp,\beta,\varepsilon
}\left(X,U_2\right)V_1,W_1 \right\rangle  =O\left(\beta^{-1}
\right)
 ,
\end{align}
\begin{align}\label{1.103}
 \varepsilon^{2}\left\langle {R}^{F_1^\perp,\beta,\varepsilon
}\left(U_1,V_1\right)W_1,Z_1 \right\rangle
=O\left(\varepsilon^{2} \right),
\end{align}
\begin{align}\label{1.103a}
 \left\langle {R}^{F_2^\perp,\beta,\varepsilon
}\left(U_2,V_2\right)W_2,Z_2 \right\rangle
=O\left(1\right),
\end{align}
\begin{align}\label{1.104}
\varepsilon \left\langle
{R}^{F_1^\perp,\beta,\varepsilon }\left(U_1,U_2\right)V_1,W_1
\right\rangle = O\left(\varepsilon \right)
 ,\end{align}
\begin{align}\label{1.105}
 \left\langle {R}^{F_1^\perp,\beta,\varepsilon
}\left(U_2,V_2\right)V_1,W_1 \right\rangle = O\left(1 \right)
 ,\end{align}
\begin{align}\label{1.106}
\beta^{-1}\varepsilon \left\langle
{R}^{F_2^\perp,\beta,\varepsilon
}\left(X,U_1\right)V_2,W_2\right\rangle
 =O\left(\beta^{-1}\varepsilon
\right)
 ,
\end{align}
\begin{align}\label{1.107}
\varepsilon \left\langle
{R}^{F_2^\perp,\beta,\varepsilon }\left(U_1,U_2\right)V_2,W_2
\right\rangle = O\left(\varepsilon \right),
 \end{align}
and
\begin{align}\label{1.108}
\varepsilon^{2}\left\langle {R}^{F_2^\perp,\beta,\varepsilon
}\left(U_1,V_1\right)V_2,W_2 \right\rangle =
O\left(\varepsilon^{2} \right).
 \end{align}

\subsection{Sub-Dirac operators associated to spin integrable  subbundles }\label{s1.4} Following \cite[\S2b]{LZ01}, we assume
now that $TM$, $ F$, $ F_i^\perp$, $i=1,\, 2$, are all oriented and
of even rank, with the orientation of $TM$ being  compatible with
the orientations on $F$, $F_1^\perp$ and $F_2^\perp$ through
(\ref{1.4}).
We further assume that $F$ is spin and carries a fixed spin
structure.

Let $S(F)=S_+(F)\oplus S_-(F)$ be the Hermitian bundle of spinors
associated to $(F,g^F)$. For any $X\in\Gamma(F)$, the Clifford
action $c(X)$ exchanges $S_\pm(F)$.

Let $i=1$ or $ 2$.
Let $\Lambda^*(F_i^{\perp })$ denote  the exterior algebra bundle
of $F_i^{\perp, *}$. Then $\Lambda^*(F_i^{\perp })$ carries a
canonically induced metric $g^{\Lambda^*(F_i^{\perp })}$ from
$g^{F_i^\perp}$. For any $U\in F_i^\perp$, let $U^*\in
F^{\perp,*}_i$ correspond  to $U$ via $g^{F_i^\perp}$.
For any $U\in\Gamma(F_i^\perp)$, set
\begin{align}\label{1.32}
c(U)=U^*\wedge-i_U,\ \ \ \widehat{c}(U)=U^*\wedge+i_U
 ,
\end{align}
where $U^*\wedge$ and $i_U$ are the exterior and interior
multiplications by $U^*$ and $U$ on $\Lambda^*(F_i^{\perp })$.

Denote  $q={\rm rk}( F)$,  $q_i={\rm rk}(F_i^\perp)$.

Let $h_1,\, \cdots,\, h_{q_i}$ be an oriented orthonormal basis of
$F_i^\perp$. Set
\begin{align}\label{1.33}
\tau\left(F_i^\perp,g^{F_i^\perp}\right)=\left(\frac{1}{\sqrt{-1}}\right)^{\frac{q_i(q_i+1)}{2}}c\left(h_1\right)\cdots
c\left(h_{q_i}\right).
\end{align}
Then
\begin{align}\label{1.34}
\tau\left(F_i^\perp,g^{F_i^\perp}\right)^2={\rm
Id}_{\Lambda^*\left(F_i^{\perp }\right)}.
\end{align}
Set
\begin{align}\label{1.35}
 {\Lambda^*_\pm\left(F_i^{\perp }\right)}=
 \left\{ h\in \Lambda^*\left(F_i^{\perp }\right):\tau\left(F_i^{\perp },g^{F_i^\perp}\right)h=\pm h\right\} .
\end{align}

Since $q_i$ is even, for any $h\in  F_i^\perp$, $c(h)$
anti-commutes with $\tau (F_i^\perp,g^{F_i^\perp} )$, while
$\widehat{c}(h)$ commutes with $\tau (F_i^\perp,g^{F_i^\perp} )$.
In particular, $c(h)$ exchanges $ {\Lambda^*_\pm (F_i^{\perp }
)}$.

Let $\widetilde{\tau}(F_i^\perp  )$ denote the ${\bf Z}_2$-grading
of $ {\Lambda ^* (F_i^{\perp } )}$ defined by
\begin{align}\label{1.351}
 \left. \widetilde{\tau}\left(F_i^\perp
 \right)\right|_{\Lambda^{\frac{\rm even}{\rm odd}} \left(F_i^{\perp }
 \right)}=\pm{\rm Id}|_{\Lambda^{\frac{\rm even}{\rm odd}} \left(F_i^{\perp }
 \right)}.
\end{align}

Now we have  the following  ${\bf Z}_2$-graded vector bundles over
$M$:
\begin{align}\label{1.36}
S(F)=S_+(F)\oplus S_-(F),
\end{align}
\begin{align}\label{1.37}
\Lambda ^*\left(F_i^{\perp }\right)=\Lambda^*_+\left(F_i^{\perp
}\right)\oplus\Lambda^*_-\left(F_i^{\perp }\right),\ \ \ i=1,\ 2,
\end{align}
and
\begin{align}\label{1.37a}
\Lambda^* \left(F_i^{\perp }\right)=\Lambda^{\rm
even}\left(F_i^{\perp }\right)\oplus\Lambda^{\rm
odd}\left(F_i^{\perp }\right),\ \ \ i=1,\ 2.
\end{align}

 We form the following  ${\bf Z}_2$-graded  tensor product:
\begin{align}\label{1.38}
 W\left(F,F^\perp_1,F^\perp_2\right)=S(F)\widehat{\otimes}
 \Lambda^*
\left(F_1^\perp\right)\widehat{\otimes}\Lambda^*
\left(F_2^\perp\right) ,
\end{align}
with the ${\bf Z}_2$-grading operator given by
\begin{align}\label{1.39}
\tau_W=\tau_{S(F)}\cdot
\tau\left(F_1^\perp,g^{F_1^\perp}\right)\cdot
\widetilde{\tau}\left(F_2^\perp \right),
\end{align}
where $\tau_{S(F)}$ is the ${\bf Z}_2$-grading operator defining the
splitting in (\ref{1.36}). We denote by
\begin{align}\label{1.40}
 W\left(F,F^\perp_1,F^\perp_2\right)=W_+\left(F,F^\perp_1,F^\perp_2\right)\oplus W_-\left(F,F^\perp_1,F^\perp_2\right)
\end{align}
the ${\bf Z}_2$-graded decomposition with respect to $\tau_W$.

Recall that the connections $\nabla^F$, $\nabla^{F_1^\perp}$ and
$\nabla^{F_2^\perp}$ have been defined in (\ref{1.9a}).  They lift canonically to Hermitian
connections $\nabla^{S(F)}$, $\nabla^{ \Lambda^*
\left(F_1^\perp\right)}$, $\nabla^{ \Lambda^*
\left(F_2^\perp\right)}$ on $S(F)$, $\Lambda^*
\left(F_1^\perp\right)$, $\Lambda^* \left(F_2^\perp\right)$
respectively, preserving the corresponding ${\bf Z}_2$-gradings.
Let $\nabla^{W(F,F_1^\perp,F_2^\perp)}$ be the canonically induced
connection on $W(F,F_1^\perp,F_2^\perp)$ which preserves the
canonically induced Hermitian metric on
$W(F,F_1^\perp,F_2^\perp)$, and also the ${\bf Z}_2$-grading of
$W(F,F_1^\perp,F_2^\perp)$.

For any vector bundle $E$ over $M$, by an integral polynomial of $E$
we will mean a bundle $\phi(E)$ which is a polynomial in the
exterior and symmetric powers of $E$ with integral coefficients.

For $i=1,\, 2$, let $\phi_i(F_i^\perp)$ be an integral polynomial of
$F_i^\perp$. We   denote the complexification of $\phi_i(F_i^\perp)$
by the same notation. Then $\phi_i(F_i^\perp)$ carries a naturally
induced Hermitian metric from $g^{F_i^\perp}$ and also a naturally
induced Hermitian connection $\nabla^{\phi_i(F_i^\perp)}$ from
$\nabla^{F_i^\perp}$.

Let $W(F,F_1^\perp,F_2^\perp)\otimes \phi_1(F_1^\perp)\otimes
\phi_2(F_2^\perp)$ be the ${\bf Z}_2$-graded vector bundle over $M$,
\begin{multline}\label{1.41}
 W\left(F,F_1^\perp,F_2^\perp\right)\otimes \phi_1\left(F_1^\perp\right)\otimes
\phi_2\left(F_2^\perp\right)=W_+\left(F,F_1^\perp,F_2^\perp\right)\otimes
\phi_1\left(F_1^\perp\right)\otimes \phi_2\left(F_2^\perp\right)\\
\oplus W_-\left(F,F_1^\perp,F_2^\perp\right)\otimes
\phi_1\left(F_1^\perp\right)\otimes \phi_2\left(F_2^\perp\right).
\end{multline}
Let $\nabla^{W\otimes\phi_1\otimes\phi_2}$ denote the naturally
induced Hermitian connection on $W(F,F_1^\perp,F_2^\perp)\otimes
\phi_1(F_1^\perp)\otimes \phi_2(F_2^\perp)$ with respect to the
naturally induced Hermitian metric on it. Clearly,
$\nabla^{W\otimes\phi_1\otimes\phi_2}$ preserves the ${\bf
Z}_2$-graded decomposition  in (\ref{1.41}).

Let $S$ be the ${\rm End}(TM)$-valued one form on $M$ defined by
\begin{align}\label{1.42}
 \nabla^{TM}=\nabla^F+\nabla^{F_1^\perp}+\nabla^{F_2^\perp}+S.
\end{align}
Let $e_1,\,\cdots,\,e_{\dim M}$ be an orthonormal basis of $TM$. Let
$\nabla ^{F,\phi_1(F^\perp_1)\otimes \phi_2(F^\perp_2)}$ be the
Hermitian connection on $W (F,F_1^\perp,F_2^\perp )\otimes
\phi_1(F_1^\perp)\otimes \phi_2(F_2^\perp)$ defined by that for any
$X\in\Gamma(TM)$,
\begin{align}\label{1.42a}
 \nabla ^{F,\phi_1(F^\perp_1)\otimes \phi_2(F^\perp_2)}_X= \nabla_{X}^{W\otimes\phi_1\otimes\phi_2}+
 \frac{1}{4}\sum_{i,\,j=1}^{\dim M}\left\langle S(X)e_i,e_j\right\rangle  c\left(e_i\right)c\left(e_j\right).
\end{align}

Let the linear operator $D^{F,\phi_1(F^\perp_1)\otimes
\phi_2(F^\perp_2)}:\Gamma(W (F,F_1^\perp,F_2^\perp )\otimes \phi_1
(F_1^\perp )\otimes \phi_2 (F_2^\perp ))\rightarrow \Gamma(W
(F,F_1^\perp,F_2^\perp )\otimes \phi_1 (F_1^\perp )\otimes \phi_2
(F_2^\perp ))$ be    defined by (compare with \cite[Definition
2.2]{LZ01})
\begin{align}\label{1.43}
D^{F,\phi_1(F^\perp_1)\otimes \phi_2(F^\perp_2)}=\sum_{i=1}^{\dim M}
c\left(e_i\right)\nabla_{e_i}^{F, \phi_1(F_1^\perp)\otimes
\phi_2(F_2^\perp)}.
\end{align}
We call $D^{F,\phi_1(F^\perp_1)\otimes \phi_2(F^\perp_2)}$ a
sub-Dirac operator with respect to the spin vector bundle $F$.

One verifies  that $D^{F,\phi_1(F^\perp_1)\otimes
\phi_2(F^\perp_2)}$ is a first order formally self-adjoint elliptic
differential operator. Moreover, it exchanges $\Gamma(W_\pm
(F,F_1^\perp,F_2^\perp )\otimes \phi_1 (F_1^\perp )\otimes \phi_2
(F_2^\perp ))$. We denote by $D_\pm^{F,\phi_1(F^\perp_1)\otimes
\phi_2(F^\perp_2)}$ the restrictions of
$D^{F,\phi_1(F^\perp_1)\otimes \phi_2(F^\perp_2)}$ to $\Gamma(W_\pm
(F,F_1^\perp,F_2^\perp )\otimes \phi_1 (F_1^\perp )\otimes \phi_2
(F_2^\perp ))$. 
Then one has
\begin{align}\label{1.44}
 \left(D_+^{F,\phi_1(F^\perp_1)\otimes \phi_2(F^\perp_2)}\right)^*=D_-^{F,\phi_1(F^\perp_1)\otimes \phi_2(F^\perp_2)} .
\end{align}

\begin{rem}\label{t1.4} As in \cite[(2.21)]{LZ01}, when
$F_1^\perp$, $F_2^\perp$ are also spin and carry fixed spin
structures, then    $TM=F\oplus F^\perp_1\oplus F^\perp_2$ is spin
and carries an induced spin structure from the spin structures on
$F$, $F_1^\perp$ and $F_2^\perp$. Moreover, one has the following
identifications  of ${\bf Z}_2$-graded vector bundles (cf.
\cite{LaMi89}) for $i=1,\, 2$,
\begin{align}\label{1.44a}
  \Lambda^*_+\left(F_i^\perp\right)\oplus \Lambda^*_-\left(F_i^\perp\right)=S_+ \left(F_i^\perp\right)\otimes
  S
  \left(F_i^\perp\right)^*\oplus S_- \left(F_i^\perp\right)\otimes
  S
  \left(F_i^\perp\right)^*,
\end{align}
\begin{multline}\label{1.44b}
  \Lambda^{\rm even}\left(F_i^\perp\right)\oplus \Lambda^{\rm odd}
  \left(F_i^\perp\right)=\left(S_+ \left(F_i^\perp\right)\otimes
  S_+
  \left(F_i^\perp\right)^*\oplus S_- \left(F_i^\perp\right)\otimes
  S_-
  \left(F_i^\perp\right)^*\right)
\\ \oplus   \left(S_+ \left(F_i^\perp\right)\otimes
  S_-
  \left(F_i^\perp\right)^*\oplus S_- \left(F_i^\perp\right)\otimes
  S_+
  \left(F_i^\perp\right)^*\right). \end{multline}
By (\ref{1.33})-(\ref{1.43}), (\ref{1.44a}) and (\ref{1.44b}),
$D^{F,\phi_1(F^\perp_1)\otimes \phi_2(F^\perp_2)}$ is simply the
twisted   Dirac operator
\begin{multline}\label{1.45}
D^{F,\phi_1(F^\perp_1)\otimes \phi_2(F^\perp_2)}:
\Gamma\left(S(TM)\widehat{\otimes} S\left(F_2^\perp\right)^*\otimes
S\left(F_1^\perp\right)^*\otimes\phi_1\left(F^\perp_1\right)\otimes
\phi_2\left(F^\perp_2\right)
 \right)\\
 \longrightarrow \Gamma\left(S(TM)\widehat{\otimes }
S\left(F_2^\perp\right)^*\otimes
S\left(F_1^\perp\right)^*\otimes\phi_1\left(F^\perp_1\right)\otimes
\phi_2\left(F^\perp_2\right)
 \right),
\end{multline}
where for $i=1,\, 2$, the Hermitian (dual) bundle of spinors
$S(F_i^\perp)^*$  associated to $(F_i^\perp,g^{F_i^\perp})$ carries
the Hermitian connection induced from $\nabla^{F_i^\perp}$.

The point of (\ref{1.43}) is that it only requires $F$ being spin.
While on the other hand, (\ref{1.45}) allows us to take the
advantage of applying the calculations already done for usual
(twisted) Dirac operators when doing local computations.
\end{rem}

\begin{rem}\label{t1.4a} It is clear that the definition in
(\ref{1.43}) does not require that $F$ being an integrable subbundle
of $TM$. 
\end{rem}

Let $\Delta^{F,\phi_1(F^\perp_1)\otimes \phi_2(F^\perp_2)}$ denote
the Bochner Laplacian defined by
\begin{align}\label{1.46}
\Delta^{F,\phi_1(F^\perp_1)\otimes
\phi_2(F^\perp_2)}=\sum_{i=1}^{\dim M}
\left(\nabla_{e_i}^{F,\phi_1(F^\perp_1)\otimes
\phi_2(F^\perp_2)}\right)^2-\nabla^{F,\phi_1(F^\perp_1)\otimes
\phi_2(F^\perp_2)}_{\sum_{i=1}^{\dim M}\nabla^{TM}_{e_i}e_i}.
  \end{align}

 Let
$k^{TM}$ be the scalar curvature of $g^{TM}$, $R^{F_i^\perp}$
($i=1,\, 2$) be the curvature of $\nabla^{F_i^\perp}$. Let
$R^{\phi_1(F_1^\perp)\otimes \phi_2(F_2^\perp)}$  be the curvature of the tensor product connection on $\phi_1(F_1^\perp)\otimes \phi_2(F_2^\perp)$ induced from
$\nabla^{\phi_1(F_1^\perp)}$ and $\nabla^{\phi_2(F_2^\perp)}$.

 In view of Remark \ref{t1.4}, the
following Lichnerowicz type formula holds:
\begin{multline}\label{1.47}
\left(D^{F,\phi_1(F^\perp_1)\otimes
\phi_2(F^\perp_2)}\right)^2=-\Delta^{F,\phi_1(F^\perp_1)\otimes
\phi_2(F^\perp_2)}+\frac{k^{TM}}{4}
+
\frac{1}{2}\sum_{i,\,j=1}^{\dim M}c\left(e_i\right)c\left(e_j\right)R^{ \phi_1(F^\perp_1)\otimes
\phi_2(F^\perp_2)}\left(e_i,e_j\right)
\\
+\frac{1}{8}\sum_{i,\, j=1}^{\dim M}\sum_{s,t}  \left\langle
R^{F_1^\perp }\left(e_i,e_j\right)h_t,h_s\right\rangle 
c\left(e_i\right)c\left(e_j\right)\widehat{c}\left(h_s\right)\widehat{c}\left(h_t\right)
\\
+\frac{1}{8}\sum_{i,\, j=1}^{\dim M}\sum_{s,t}  \left\langle
R^{F_2^\perp }\left(e_i,e_j\right)h_t',h_s'\right\rangle 
c\left(e_i\right)c\left(e_j\right)\widehat{c}\left(h_s'\right)\widehat{c}\left(h_t'\right)  ,
\end{multline}
where $h_s,\ h_t$ (resp. $h_s',\ h_t'$) run through an orthonormal basis of $F_1^\perp $ (resp. $F_2^\perp$). 

When $M$ is compact,   by   the Atiyah-Singer index theorem
\cite{ASI} (cf. \cite{LaMi89}), one has
\begin{multline}\label{1.48}
{\rm ind}\left( D_+^{F,\phi_1(F^\perp_1)\otimes
\phi_2(F^\perp_2)}\right)\\ = 2^{\frac{q_1}{2} }\left\langle
\widehat{A}(F)\widehat{L}\left(F_1^\perp\right)
e\left(F_2^\perp\right){\rm
ch}\left(\phi_1\left(F_1^\perp\right)\right){\rm
ch}\left(\phi_2\left(F_2^\perp\right)\right),[M]\right\rangle,
\end{multline}
where $\widehat{L}(F_1^\perp)$   is the Hirzebruch
$\widehat{L}$-class (cf. \cite[(11.18') of Chap. III]{LaMi89}) of
$F_1^\perp$, $e(F_2^\perp)$ is the Euler class (cf. \cite[\S
3.4]{Z01}) of $F_2^\perp$, and ``${\rm ch}$'' is the notation for
the Chern character (cf. \cite[\S1.6.4]{Z01}).

\subsection{A vanishing theorem for   almost isometric
foliations}\label{s1.5} In this subsection, we assume $M$ is compact
and prove a vanishing theorem.  Some of the computations  in
this subsection will be used in the next section where we will deal
with the case where $M$ is non-compact.

Let $f_1,\, \cdots,\, f_q$ be an oriented orthonormal basis of $F$.
Let $h_1,\,\cdots,\,h_{q_1}$ (resp. $e_1,\,\cdots,\,e_{q_2}$) be an
oriented orthonormal basis of $F_1^\perp$ (resp. $F_2^\perp$).

Let $\beta>0$, $\varepsilon>0$ and   consider the construction in Section
\ref{s1.4} with respect to the metric $g^{TM}_{\beta,\varepsilon}$ defined
in (\ref{1.8}). We still use the superscripts ``$\beta$,
$\varepsilon$'' to decorate the geometric data associated to
$g^{TM}_{\beta,\varepsilon} $. For example, $D^{F,\phi_1(F^\perp_1)\otimes
\phi_2(F^\perp_2),\beta,\varepsilon }$ now denotes the sub-Dirac
operator constructed in (\ref{1.43}) associated  to
$g^{TM}_{\beta,\varepsilon} $. Moreover, it can be written as
\begin{multline}\label{1.431}
D^{F,\phi_1(F^\perp_1)\otimes
\phi_2(F^\perp_2),\beta,\varepsilon}=\beta^{-1}\sum_{i=1}^{q}
c_{\beta,\varepsilon}\left(\beta^{-1}f_i\right)\nabla_{f_i}^{F, \phi_1(F_1^\perp)\otimes
\phi_2(F_2^\perp),\beta,\varepsilon}
\\
+\varepsilon \sum_{j=1}^{q_1}
c_{\beta,\varepsilon}\left(\varepsilon h_j\right)\nabla_{h_j}^{F, \phi_1(F_1^\perp)\otimes
\phi_2(F_2^\perp),\beta,\varepsilon}
 + \sum_{s=1}^{q_2}
c_{\beta,\varepsilon}\left(e_s\right)\nabla_{e_s}^{F, \phi_1(F_1^\perp)\otimes
\phi_2(F_2^\perp),\beta,\varepsilon}.
\end{multline}

By (\ref{1.431}), the Lichnerowicz type formula (\ref{1.47}) for
$(D^{F,\phi_1(F^\perp_1)\otimes \phi_2(F^\perp_2),\beta,\varepsilon })^2$
takes the following form (compare with
\cite[Theorem 2.3]{LZ01}),
\begin{multline}\label{1.49}
\left(D^{F,\phi_1(F^\perp_1)\otimes
\phi_2(F^\perp_2),\beta,\varepsilon}\right)^2=-\Delta^{F,\phi_1(F^\perp_1)\otimes
\phi_2(F^\perp_2),\beta,\varepsilon}+\frac{k^{TM,\beta,\varepsilon}}{4}
\\
+
\frac{1}{2\beta^2}\sum_{i,\,j=1}^qc_{\beta,\varepsilon}\left(\beta^{-1}f_i\right)c_{\beta,\varepsilon}\left(\beta^{-1}f_j\right)
R^{\phi_1(F^\perp_1)\otimes \phi_2(F^\perp_2),\beta,\varepsilon}\left(f_i,f_j\right)
\end{multline}
$$
+\frac{\varepsilon^2 }{2}\sum_{i,\,j=1}^{q_1}c_{\beta,\varepsilon}\left(\varepsilon h_i\right)
c_{\beta,\varepsilon}\left(\varepsilon h_j\right)R^{\phi_1(F^\perp_1)\otimes \phi_2(F^\perp_2),\beta,\varepsilon}\left(h_i,h_j\right) 
$$
$$
+\frac{1}{2}\sum_{i,\,j=1}^{q_2}c_{\beta,\varepsilon}\left(e_i\right)c_{\beta,\varepsilon}\left(e_j\right)R^{\phi_1(F^\perp_1)\otimes \phi_2(F^\perp_2),\beta,\varepsilon}\left(e_i,e_j\right)
$$
$$
+\frac{\varepsilon }{\beta}\sum_{i =1}^q\sum_{j
=1}^{q_1}c_{\beta,\varepsilon}\left(\beta^{-1}f_i\right)c_{\beta,\varepsilon}\left(\varepsilon h_j\right)R^{\phi_1(F^\perp_1)\otimes \phi_2(F^\perp_2),\beta,\varepsilon}\left(f_i,h_j\right)$$
$$
 +\frac{1}{\beta}\sum_{i =1}^q\sum_{j
=1}^{q_2}c_{\beta,\varepsilon}\left(\beta^{-1}f_i\right)c_{\beta,\varepsilon}\left(e_j\right)R^{\phi_1(F^\perp_1)\otimes \phi_2(F^\perp_2),\beta,\varepsilon}\left(f_i,e_j\right)
$$
$$
+\varepsilon \sum_{i =1}^{q_1}\sum_{j
=1}^{q_2}c_{\beta,\varepsilon}\left(\varepsilon h_i\right)c_{\beta,\varepsilon}\left(e_j\right)R^{\phi_1(F^\perp_1)\otimes \phi_2(F^\perp_2),\beta,\varepsilon}\left(h_i,e_j\right)
 $$
 $$
+\frac{1}{8\beta^2}\sum_{i,\, j=1}^q\sum_{s,\, t=1}^{q_1}\left\langle
R^{F_1^\perp,\beta,\varepsilon}\left(f_i,f_j\right)h_t,h_s\right\rangle
c_{\beta,\varepsilon}\left(\beta^{-1}f_i\right)c_{\beta,\varepsilon}\left(\beta^{-1}f_j\right)\widehat{c}_{\beta,\varepsilon}\left(\varepsilon h_s\right)\widehat{c}_{\beta,\varepsilon}\left(\varepsilon h_t\right)
$$
$$ +\frac{\varepsilon^2}{8}\sum_{i,\, j=1}^{q_1}\sum_{s,\,
t=1}^{q_1}\left\langle
R^{F_1^\perp,\beta,\varepsilon}\left(h_i,h_j\right)h_t,h_s\right\rangle
c_{\beta,\varepsilon}\left(\varepsilon h_i\right)c_{\beta,\varepsilon}\left(\varepsilon h_j\right)\widehat{c}_{\beta,\varepsilon}\left(\varepsilon h_s\right)\widehat{c}_{\beta,\varepsilon}\left(\varepsilon h_t\right)$$
$$
+\frac{1}{8}\sum_{i,\, j=1}^{q_2}\sum_{s,\,
t=1}^{q_1}\left\langle
R^{F_1^\perp,\beta,\varepsilon}\left(e_i,e_j\right)h_t,h_s\right\rangle
c_{\beta,\varepsilon}\left(e_i\right)c_{\beta,\varepsilon}\left(e_j\right)\widehat{c}_{\beta,\varepsilon}\left(\varepsilon h_s\right)\widehat{c}_{\beta,\varepsilon}\left(\varepsilon h_t\right)$$
$$
+\frac{\varepsilon }{4\beta}\sum_{i =1}^q\sum_{j=1
}^{q_1}\sum_{s,\, t=1}^{q_1}\left\langle
R^{F_1^\perp,\beta,\varepsilon}\left(f_i,h_j\right)h_t,h_s\right\rangle
c_{\beta,\varepsilon}\left(\beta^{-1}f_i\right)c_{\beta,\varepsilon}\left(\varepsilon h_j\right)\widehat{c}_{\beta,\varepsilon}\left(\varepsilon h_s\right)\widehat{c}_{\beta,\varepsilon}\left(\varepsilon h_t\right)$$
$$
+\frac{1}{4\beta}\sum_{i =1}^q\sum_{j=1 }^{q_2}\sum_{s,\,
t=1}^{q_1}\left\langle
R^{F_1^\perp,\beta,\varepsilon}\left(f_i,e_j\right)h_t,h_s\right\rangle
c_{\beta,\varepsilon}\left(\beta^{-1}f_i\right)c_{\beta,\varepsilon}\left(e_j\right)\widehat{c}_{\beta,\varepsilon}\left(\varepsilon h_s\right)\widehat{c}_{\beta,\varepsilon}\left(\varepsilon h_t\right)$$
$$
+\frac{\varepsilon }{4}\sum_{i=1
}^{q_1}\sum_{j=1 }^{q_2}\sum_{s,\, t=1}^{q_1}\left\langle
R^{F_1^\perp,\beta,\varepsilon}\left(h_i,e_j\right)h_t,h_s\right\rangle
c_{\beta,\varepsilon}\left(\varepsilon h_i\right)c_{\beta,\varepsilon}\left(e_j\right)\widehat{c}_{\beta,\varepsilon}\left(\varepsilon h_s\right)\widehat{c}_{\beta,\varepsilon}\left(\varepsilon h_t\right)
$$
$$
+\frac{1}{8\beta^2}\sum_{i,\, j=1}^q\sum_{s,\, t=1}^{q_2}\left\langle
R^{F_2^\perp,\beta,\varepsilon}\left(f_i,f_j\right)e_t,e_s\right\rangle
c_{\beta,\varepsilon}\left(\beta^{-1}f_i\right)c_{\beta,\varepsilon}\left(\beta^{-1}f_j\right)\widehat{c}_{\beta,\varepsilon}\left(e_s\right)\widehat{c}_{\beta,\varepsilon}\left(e_t\right)
$$
$$ +\frac{\varepsilon^{2}}{8}\sum_{i,\, j=1}^{q_1}\sum_{s,\,
t=1}^{q_2}\left\langle
R^{F_2^\perp,\beta,\varepsilon}\left(h_i,h_j\right)e_t,e_s\right\rangle
c_{\beta,\varepsilon}\left(\varepsilon h_i\right)c_{\beta,\varepsilon}\left(\varepsilon h_j\right)\widehat{c}_{\beta,\varepsilon}\left(e_s\right)\widehat{c}_{\beta,\varepsilon}\left(e_t\right)$$
$$
+\frac{1 }{8}\sum_{i,\, j=1}^{q_2}\sum_{s,\,
t=1}^{q_2}\left\langle
R^{F_2^\perp,\beta,\varepsilon}\left(e_i,e_j\right)e_t,e_s\right\rangle
c_{\beta,\varepsilon}\left(e_i\right)c_{\beta,\varepsilon}\left(e_j\right)\widehat{c}_{\beta,\varepsilon}\left(e_s\right)\widehat{c}_{\beta,\varepsilon}\left(e_t\right)$$
$$
+\frac{\varepsilon }{4\beta}\sum_{i=1 }^q\sum_{j=1
}^{q_1}\sum_{s,\, t=1}^{q_2}\left\langle
R^{F_2^\perp,\beta,\varepsilon}\left(f_i,h_j\right)e_t,e_s\right\rangle
c_{\beta,\varepsilon}\left(\beta^{-1}f_i\right)c_{\beta,\varepsilon}\left(\varepsilon h_j\right)\widehat{c}_{\beta,\varepsilon}\left(e_s\right)\widehat{c}_{\beta,\varepsilon}\left(e_t\right)$$
$$
+\frac{1}{4\beta}\sum_{i =1}^q\sum_{j=1 }^{q_2}\sum_{s,\,
t=1}^{q_2}\left\langle
R^{F_2^\perp,\beta,\varepsilon}\left(f_i,e_j\right)e_t,e_s\right\rangle
c_{\beta,\varepsilon}\left(\beta^{-1}f_i\right)c_{\beta,\varepsilon}\left(e_j\right)\widehat{c}_{\beta,\varepsilon}\left(e_s\right)\widehat{c}_{\beta,\varepsilon}\left(e_t\right)$$
$$
+\frac{\varepsilon }{4}\sum_{i
=1}^{q_1}\sum_{j=1 }^{q_2}\sum_{s,\, t=1}^{q_2}\left\langle
R^{F_2^\perp,\beta,\varepsilon}\left(h_i,e_j\right)e_t,e_s\right\rangle
c_{\beta,\varepsilon}\left(\varepsilon h_i\right)c_{\beta,\varepsilon}\left(e_j\right)\widehat{c}_{\beta,\varepsilon}\left(e_s\right)\widehat{c}_{\beta,\varepsilon}\left(e_t\right).
$$

 By (\ref{1.24}), (\ref{1.31})-(\ref{1.108}) and (\ref{1.49}), we get that when $\beta>0$,
 $\varepsilon>0$ are small,
 \begin{align}\label{1.50}
\left(D^{F,\phi_1(F^\perp_1)\otimes
\phi_2(F^\perp_2),\beta,\varepsilon}\right)^2=-\Delta^{F,\phi_1(F^\perp_1)\otimes
\phi_2(F^\perp_2),\beta,\varepsilon}+\frac{k^{F}}{4\beta^2}
+O\left( \frac{1 }{\beta}+\frac{\varepsilon^2 }{\beta^2}\right).
 \end{align}

 \begin{prop}\label{t1.6} If $k^F>0$ over $M$, then for any Pontrjagin classes
 $p(F_1^\perp)$,
 $p'(F_2^\perp)$  of $F_1^\perp$,
 $F_2^\perp$ respectively, the following identity holds,
 \begin{align}\label{1.51}
\left\langle\widehat{A}(F)p\left(F_1^\perp\right)e\left(F_2^\perp\right)p'\left(F_2^\perp\right),[M]\right\rangle
=0.
 \end{align}
 \end{prop}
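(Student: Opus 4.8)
The plan is to deduce \eqref{1.51} from the vanishing of $\mathrm{ind}\bigl(D_+^{F,\phi_1(F^\perp_1)\otimes\phi_2(F^\perp_2),\beta,\varepsilon}\bigr)$ for suitable choices of the integral polynomials $\phi_1$, $\phi_2$, using the index formula \eqref{1.48}, the Lichnerowicz-type estimate \eqref{1.50}, and the compactness of $M$. First I would observe that since $D^{F,\phi_1(F^\perp_1)\otimes\phi_2(F^\perp_2),\beta,\varepsilon}$ is a first order formally self-adjoint elliptic operator on the closed manifold $M$, a nonzero index forces a nonzero kernel, hence a nonzero element $\sigma$ on which $\bigl(D^{F,\phi_1\otimes\phi_2,\beta,\varepsilon}\bigr)^2\sigma=0$, and therefore $\langle(D^{F,\phi_1\otimes\phi_2,\beta,\varepsilon})^2\sigma,\sigma\rangle_{L^2}=0$. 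On the other hand, the Bochner term $-\Delta^{F,\phi_1\otimes\phi_2,\beta,\varepsilon}$ in \eqref{1.50} is nonnegative in the $L^2$ pairing, while by hypothesis $k^F>0$ on the compact manifold $M$, so $k^F\geq c>0$ for some constant $c$; thus $\frac{k^F}{4\beta^2}\geq\frac{c}{4\beta^2}$, which dominates the error term $O\bigl(\frac1\beta+\frac{\varepsilon^2}{\beta^2}\bigr)$ once $\beta$ and then $\varepsilon/\beta$ are taken small enough. Concretely, fix $\beta$ small so that the $\frac1\beta$ part is absorbed (after dividing through by $\frac1{\beta^2}$, this is the statement that $c$ beats a multiple of $\beta$), and then shrink $\varepsilon$ so the $\frac{\varepsilon^2}{\beta^2}$ part is likewise absorbed; the operator on the right of \eqref{1.50} is then strictly positive, contradicting the existence of a nonzero kernel element. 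Hence $\mathrm{ind}\bigl(D_+^{F,\phi_1(F^\perp_1)\otimes\phi_2(F^\perp_2),\beta,\varepsilon}\bigr)=0$ for all such $\phi_1,\phi_2$.

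Next I would feed this vanishing into \eqref{1.48}. That formula gives
\begin{align}\label{proofplan1}
2^{q_1/2}\left\langle\widehat{A}(F)\widehat{L}\left(F_1^\perp\right)e\left(F_2^\perp\right)\mathrm{ch}\left(\phi_1\left(F_1^\perp\right)\right)\mathrm{ch}\left(\phi_2\left(F_2^\perp\right)\right),[M]\right\rangle=0
\end{align}
for every pair of integral polynomials $\phi_1$, $\phi_2$. Now $\widehat{L}(F_1^\perp)$ is an invertible characteristic class (it has leading term $1$), so $\widehat{A}(F)\widehat{L}(F_1^\perp)e(F_2^\perp)$ is a fixed class and the freedom lies entirely in the factor $\mathrm{ch}(\phi_1(F_1^\perp))\mathrm{ch}(\phi_2(F_2^\perp))$. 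The point is a standard one going back to the splitting-principle manipulations in the work of Atiyah--Hirzebruch and its descendants (and used in \cite{LZ01}): as $\phi_1$ runs over integral polynomials of $F_1^\perp$ the classes $\mathrm{ch}(\phi_1(F_1^\perp))$ span, over $\Q$ in the appropriate degrees, the full ring of Pontrjagin classes of $F_1^\perp$ (symmetric functions in the Pontrjagin roots), and similarly for $\phi_2$; more precisely one can, by taking suitable integer linear combinations of the $\phi_i$, isolate an arbitrary monomial in the Pontrjagin classes. Absorbing the fixed invertible factor $\widehat{L}(F_1^\perp)$ and writing $p(F_1^\perp)$, $p'(F_2^\perp)$ for arbitrary Pontrjagin classes, \eqref{proofplan1} then yields \eqref{1.51}.

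The main obstacle is the analytic step, namely making the positivity argument rigorous on the \emph{compact} manifold $M$: one must be sure that the $O(\cdot)$ error in \eqref{1.50} is a genuine bounded operator (a zeroth-order term, i.e.\ a bundle endomorphism whose pointwise norm is $O(\frac1\beta+\frac{\varepsilon^2}{\beta^2})$ uniformly on $M$), so that it can be dominated in the $L^2$ inner product by the scalar $\frac{k^F}{4\beta^2}\geq\frac{c}{4\beta^2}$. This is exactly what the curvature estimates \eqref{1.31}--\eqref{1.108} and Lemma \ref{tf} provide: every term in \eqref{1.49} other than $-\Delta^{F,\phi_1\otimes\phi_2,\beta,\varepsilon}+\frac{k^{TM,\beta,\varepsilon}}{4}$ is a Clifford-algebra-valued contraction of curvature components whose size is controlled by those estimates, and combined with Proposition \ref{t12.1} one gets precisely \eqref{1.50}. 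Once this is in hand, the choice ``$\beta$ small, then $\varepsilon/\beta$ small'' makes the right side of \eqref{1.50} positive-definite, and the rest is the formal index-theory/characteristic-class bookkeeping described above. The only other mild subtlety is that the twisting bundles $\phi_i(F_i^\perp)$ and their connections contribute curvature which, being independent of the rescaling, is $O(1)$ — hence already absorbed in the error — so the estimate \eqref{1.50} holds uniformly in the choice of $\phi_1,\phi_2$ and the two-parameter limit may be taken once and for all.
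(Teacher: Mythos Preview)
Your proposal is correct and follows essentially the same approach as the paper: use compactness to get $k^F\geq c>0$, choose $\beta$ and then $\varepsilon$ small so that the zeroth-order part of \eqref{1.50} is strictly positive, conclude $\mathrm{ind}(D_+^{F,\phi_1\otimes\phi_2,\beta,\varepsilon})=0$, plug into \eqref{1.48}, and then use that the classes $\widehat{L}(F_1^\perp)\mathrm{ch}(\phi_1(F_1^\perp))$ and $\mathrm{ch}(\phi_2(F_2^\perp))$ rationally span the Pontrjagin rings. One small inaccuracy: the twisting curvatures $R^{\phi_i(F_i^\perp),\beta,\varepsilon}$ are \emph{not} independent of the rescaling (they are built from $\nabla^{F_i^\perp,\beta,\varepsilon}$), but the estimates \eqref{1.31}--\eqref{1.108} that you already cite control them so that, together with the Clifford prefactors in \eqref{1.49}, they still contribute only to the $O(\frac{1}{\beta}+\frac{\varepsilon^2}{\beta^2})$ term; so your conclusion stands.
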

 \begin{proof}Since $k^F>0$ over $M$, one can take $\beta>0$, $\varepsilon>0$
 small enough so that the corresponding terms in the right hand side of
 (\ref{1.50}) verifies that
 \begin{align}\label{1.52}
 \frac{k^{F}}{4\beta^2} +O\left( \frac{1 }{\beta}+\frac{\varepsilon^2 }{\beta^2}\right)>0
 \end{align}
 over $M$.
Since  $-\Delta^{F,\phi_1(F^\perp_1)\otimes
\phi_2(F^\perp_2),\beta,\varepsilon}$ is   nonnegative, by
(\ref{1.44}), (\ref{1.50}) and (\ref{1.52}), one gets
\begin{align}\label{1.53}
{\rm ind}\left( D_+^{F,\phi_1(F^\perp_1)\otimes
\phi_2(F^\perp_2),\beta,\varepsilon}\right)= 0.
\end{align}

From (\ref{1.48}) and (\ref{1.53}), we get
\begin{align}\label{1.54}\left\langle
\widehat{A}(F)\widehat{L}\left(F_1^\perp\right){\rm
ch}\left(\phi_1\left(F_1^\perp\right)\right)
e\left(F_2^\perp\right){\rm
ch}\left(\phi_2\left(F_2^\perp\right)\right),[M]\right\rangle=0.
\end{align}

Now as it is standard that any Pontrjagin class of $F_1^\perp$
(resp. $F_2^\perp$)  can be expressed as a rational linear
combination of   classes of the form $\widehat{L} (F_1^\perp
){\rm ch} (\phi_1 (F_1^\perp ) )$ (resp. $ {\rm ch} (\phi_2 (F_2^\perp ) )$), one gets (\ref{1.51}) from
(\ref{1.54}).
 \end{proof}

 \begin{rem}\label{t1.7} Recall that $F^\perp=F^\perp_1\oplus F^\perp_2$. It is proved in \cite[Theorem 2.6]{LZ01}
 that if the conditions in Proposition \ref{t1.6} hold, then
 $
 \langle\widehat{A}(F)p (F^\perp   ),[M] \rangle
=0.
 $ Here if one changes the ${\bf Z}_2$-grading in the definition of the sub-Dirac operator by replacing
 $\widetilde{\tau}(F_2^\perp)$ in (\ref{1.39}) by $\tau(F_2^\perp,
 g^{F_2^\perp})$, then one can prove that under the same condition
 as in Proposition \ref{t1.6}, 
\begin{align}\label{1.54a}\left\langle
\widehat{A}(F)
p\left(F_1^\perp\right)p'\left(F_2^\perp\right),[M]\right\rangle=0
\end{align}
for any Pontrjagin classes $p(F_1^\perp)$, $p'(F_2^\perp)$ of
$F_1^\perp$, $F_2^\perp$.
 \end{rem}

\begin{rem}\label{t1.7x}  Formulas (\ref{1.51}) and (\ref{1.54a}) hold indeed without   Condition (C) in Definition \ref{t1.1c}. This can be checked if we set $\varepsilon=\sqrt{\beta}$. 
 \end{rem}

\section{Connes  fibration and the Dirac operator on foliations}\label{s2}

In this Section we prove Theorem \ref{t0.2}. We will make use of the Connes fibration
which has indeed played an essential role in Connes' original
proof of Theorem \ref{t0.1} given in \cite{Co86}.

This Section is organized as follows. In Section \ref{s2.1}, we recall the construction of the  Connes  fibration
over a foliation. 
 In Section
\ref{s2.2a}, we introduce a coordinate system near the embedded 
submanifold from the original foliation into the Connes  foliation.  In Section \ref{s2.2}, we give an adiabatic limit
estimate of the sub-Dirac operator on the Connes    fibration.  In
Section \ref{s2.3}, we  embed the smooth sections over the
embedded submanifold to the space of smooth sections, having
compact support near the embedded submanifold,  on the Connes  
fibration. In Section \ref{s2.4},  we state a key estimate result which will be proved in   Sections
\ref{s2.5}-\ref{s2.8}.   In Sections
\ref{s2.9}, we complete  the proof of Theorem
\ref{t0.2}.

\subsection{The Connes   fibration
}\label{s2.1}

We  start by recalling the original construction in \cite{Co86}. 

Let $(M,F)$ be a compact foliation, where $F$ is an
integrable subbundle of the tangent vector bundle $TM$ of a closed
manifold $M$. For simplicity, we  make the assumption that $TM, \, F$ are oriented,
then $TM/F$ is also oriented. We further assume that $TM$ is spin and
carries a fixed spin structure.

For any oriented vector space $E$ of rank $n$, let $\mE$ be the
set of all Euclidean metrics on $E$. It is well known that $\mE$
is the homogeneous space $GL(n,{\bf R})^+/SO(n)$ (with $\dim
\mE=\frac{n(n+1) }{2}$), which carries a
natural Riemannian metric of nonpositive sectional curvature (cf.
\cite{He}). In particular, any two points of $\mE$ can be joined
by a unique geodesic.

Following  \cite[Section 5]{Co86}, let $\pi:\mM\rightarrow M$ be the
fibration over $M$ such that for any $x\in M$, $\mM_x=\pi^{-1}(x)$
is the space of Euclidean metrics on the linear space $T_xM/F_x$.
Clearly, $\mM$ is noncompact.

Let  $T^V\mM$ denote the vertical tangent bundle of the fibration
$\pi:\mM\rightarrow M$. Then it carries a natural metric
$g^{T^V\mM}$   such that   any two points $p,\,
q\in\mM_x$, with $x\in M$,  can be joined by a unique geodesic in
$\mM_x$.

By using the Bott connection
\cite{Bo70} on $TM/F$, one can lift $F$ to an integrable subbundle
$\mF$ of $T\mM$.\footnote{Indeed,
the Bott connection on $TM/F$ determines an integrable lift
$\widetilde{\mF}$ of $F$ in $T\widetilde{\mM}$, where
$\widetilde{\mM}=GL(TM/F)^+$ is the $GL(q_1,{\bf R})^+$ (with
$q_1={\rm rk}(TM/F)$) principal bundle of oriented frames  over
$M$. Now as $\widetilde{\mM}$ is a principal $SO(q_1)$ bundle over
$\mM$, $\widetilde{\mF}$ determines an integrable subbundle $\mF$
of $T\mM$.}

For any $v\in\mM$, $T_v\mM/(\mF_v\oplus T^V_v\mM)$ identifies with
$T_{\pi(v)}M/F_{\pi(v)}$ under the projection $\pi:\mM\rightarrow
M$. By definition, $v$ determines a metric on
$T_{\pi(v)}M/F_{\pi(v)}$, thus it also determines a metric on
$T_v\mM/(\mF_v\oplus T^V_v\mM)$. In this way, $T\mM/(\mF \oplus T^V
\mM)$ carries a canonically induced metric.

Let $\mF_1^\perp$ be a subbundle of $T\mM$, which is  transversal  to $\mF \oplus T^V \mM$, such that we have a
splitting $T\mM=(\mF \oplus T^V \mM)\oplus\mF_1^\perp$. Then
$\mF_1^\perp$ can be identified with $T\mM/(\mF \oplus T^V \mM)$
and carries a canonically induced metric $g^{\mF_1^\perp}$.
We also  denote $T^V\mM$ by $\mF_2^\perp$.

Let $g^F$ be a Euclidean metric on $F$, then it lifts to a Euclidean metric $g^{\mF}$ on $\mF$. 
Let $g^{T\mM}$ be the Riemannian metric on $T\mM$ defined by the
following orthogonal splitting,
\begin{align}\label{2.1}\begin{split}
       T\mM =   \mF\oplus \mF^\perp_1\oplus \mF^\perp_2,\ \ \ \ \ \
g^{T\mM }=   g^{\mF}\oplus g^{\mF^\perp_1}\oplus
g^{\mF^\perp_2}.\end{split}
\end{align}

 By   \cite[Lemma 5.2]{Co86}, $(\mM,\mF)$
admits  an almost isometric structure in the sense of Definition
\ref{t1.1}, with the metrics given in (\ref{1.4}) and/or
(\ref{2.1}).\footnote{We will use notations  similar to those in Section \ref{s1}, with the only difference  that
when  dealing with the Connes   fibration, we use caligraphic letters.
}  
 In particular, (\ref{1.5}) holds.\footnote{In fact, for
any  $X\in \Gamma(F) $, let  ${\mathcal X} \in\Gamma(\mF)$ denote
the lift of ${  X} $. Let $\varphi_t$ (with $t$ close to zero) be
the one parameter family of diffeomorphisms on $\mM$ generated  by
${\mathcal X} $. Then each $\varphi_t$  acts on the complete
transversal to $\mF$ in $\mM$. The differential  of $\varphi_t$,
when acting on the complete transversal,  maps each
$(\mF_1^\perp+\mF_2^\perp)_x$ ($x\in \mM$) to
$(\mF_1^\perp+\mF_2^\perp)_{\varphi_t(x)}$ and verifies \cite[Lemma
5.2]{Co86}. By taking derivative at $t=0$, one gets (\ref{1.5}).}

\comment{

\begin{defn}\label{t12.2} 
By a Connes   fibration over $(M,F)$ we mean a fibration $\pi:\mM\rightarrow M$ such that (i) there exists a splitting 
$T\mM=T^V\mM\oplus T^H\mM,$
where $T^V\mM$ is the vertical tangent bundle of the fibration, such that $F$ lifts to an integrable  subbundle $\mF\subset T^H\mM$;
 (ii) if we denote $T^V\mM=\mF_2^\perp$, then there exists a splitting 
$T^H\mM=\mF\oplus \mF_1^\perp$
as well as  Euclidean metrics $g^{\mF_1^\perp}$,  $g^{\mF_2^\perp}$ on $\mF_1^\perp$, $\mF_2^\perp$ such that  the foliation $(\mM,\mF)$ carries an associated  almost isometric foliation structure in the sense of Section \ref{s1.1};
(iii) there exists a smooth (embedded)  section $s:M\hookrightarrow \mM$. 
\end{defn}

}

One of the specific features of the Connes  fibration is that since
$\mF_2^\perp=T^V\mM$ is the vertical tangent bundle of a fibration,
  the following identity holds:
\begin{align}\label{2.2} [U,V]\in \Gamma\left(\mF_2^\perp\right)\ \
\ {\rm for}\ \ \ U,  \ V\in\Gamma\left(\mF_2^\perp\right).
\end{align}
That is,  Condition (C) in Definition \ref{t1.1c} holds for
$(\mM,\mF)$. Combining with (\ref{1.0}) and the second identity in
(\ref{1.5}), one sees that $\mF\oplus \mF_2^\perp$ is also an integrable
subbundle of $T\mM$.

For any $\beta>0$, $\varepsilon>0$, let $g_{\beta,
\varepsilon}^{T\mM}$ be the Riemannian metric on $T\mM$ defined as
in (\ref{1.8}). By (\ref{1.7}), (\ref{1.8}) and (\ref{2.2}), the
following identity holds   for the Connes   fibration,
\begin{align}\label{2.3}  \nabla^{\mF_2^\perp,\beta,\varepsilon}= {\nabla}^{\mF_2^\perp
}.
\end{align}
Equivalently, for any $X\in T\mM$ and $U,\
V\in\Gamma(\mF_2^\perp)$, one has
$ \langle\nabla_X^{\mF_2^\perp,\beta,\varepsilon}U,V \rangle
= \langle\nabla_X^{\mF_2^\perp }U,V \rangle.
$


Take a metric on ${TM/F}$. This is
equivalent to taking an embedded section $s:M\hookrightarrow \mM$
of the Connes fibration $\pi:\mM\rightarrow M$.

\subsection{A coordinate system near $s(M)$ }\label{s2.2a}

Let $s(M)\subset \mM$ be the image of the embedded  section $s:M\hookrightarrow \mM$. 
Consider the induced  fibration $s\circ  {\pi}:\mM\rightarrow s(M)$. In
what follows, for any $x\in s(M)$, we will denote the fiber
$\mM_{\pi(x)}$ simply by $\mM_x$.

 For any $x\in s(M)$, $Z\in T_x\mM_x=\mF_2^\perp|_x$ with $|Z|$ sufficiently small, let
 $\exp^{\mM_x}(tZ)$ be the geodesic in
 $\mM_x$ such that $\exp^{\mM_x}(0)=x$,
 $\frac{d \exp^{\mM_x}(tZ)}{dt}|_{t=0}=Z$.


 For any  $\alpha>0$,
 let $\psi:U_\alpha(\mF_2^\perp)=\{ (x,Z):x\in s(M),\, Z\in \mF_{2}^\perp|_x,\, |Z|<\alpha
 \}\rightarrow \mM$ be defined such that for any $x\in s(M)$, $Z\in T_x\mM_x$ with $|Z|<\alpha$, 
\begin{align}\label{1311}
 \psi (x,Z)\mapsto \exp^{\mM_x}(Z).
 \end{align}
Clearly, $\psi$ is a diffeomorphism from $U_\alpha(\mF_2^\perp)$ to its image, when $\alpha $ is sufficiently small, which we fix it now. In case of no confusion, we will also use the notation $(x,Z) $ to denote  its image $\psi(x,Z)$. 
In particular, $ (x,0)=x$.
We also denote the geodesic $\exp^{\mM_x}(tZ)$   
 by $tZ$.

 On $\psi(U_{\alpha}(\mF_2^\perp))\simeq U_{\alpha}(\mF_2^\perp)$, the volume form $dv_{\mM}$ can be
 written as
\begin{align}\label{111}
 dv_{\mM}(x,Z)=k(x,Z)dv_{\mF^\perp_{2,x}}(Z)dv_{s(M)}(x),
 \end{align}
 where $dv_{\mF^\perp_{2,x}}$ is the volume form on $\mF^\perp_{2,x}=\mF^\perp_2|_x$ which in turn determines
 the corresponding volume form on $\mM_x\cap \psi(U_{\alpha}(\mF_2^\perp))$, $dv_{s(M)}$ is
 the volume form on $s(M)$ with respect to the restricted metric,
 and $k(x,Z)>0$ is the  function determined by (\ref{1311}) and
 (\ref{111}).\footnote{As $\mF_2^\perp|_{s(M)}$ need
 not be orthogonal to $Ts(M)$, $k(x,0) $ need not be constant on $s(M)$   (compare with \cite[(8.22)]{BL91}).  }

 In what follows, we will also denote $dv_{\mF^\perp_{2,x}}$ by
 $dv_{\mM_x}$.

\subsection{Adiabatic limit near     $s(M)$
}\label{s2.2} 

Recall that for $\beta>0$ and $\varepsilon>0$, $g_{\beta,\varepsilon}^{T\mM}$ is the
Riemannian  metric on $T\mM$ defined by
\begin{align}\label{2.7}
g^{T\mM}_{\beta,\varepsilon }= \beta^2 g^{\mF}\oplus \frac{1}{
\varepsilon^{2}} g^{\mF^\perp_1}\oplus g^{\mF^\perp_2}. 
\end{align}

Since we assume $TM$ is spin, $\mF\oplus \mF_1^\perp=\pi^*(TM)$ is spin, and we take 
  $D^{\mF, \beta,\varepsilon}$ to be the sub-Dirac operator
constructed in (\ref{1.43}) with respect to $g_{\beta,\varepsilon}^{T\mM}$, but with $S(\mF)\widehat\otimes \Lambda^*(\mF^\perp_1)$ being  replaced by $S(\mF\oplus\mF^\perp_1)$.\footnote{In this section, for simplicity, we will not consider the twisted bundles  $\phi_1(\mF_1^\perp)$ and  $\phi_2(\mF_2^\perp)$.}

By (\ref{2.7}) one has
\begin{align}\label{m56}
dv_{(T\mM,g_{\beta,\varepsilon}^{T\mM})}=\frac{\beta^q}{\varepsilon^{q_1}}dv_{(T\mM,g^{T\mM})}.
\end{align}

For simplicity, from now on, by   $L^2$-norms we will mean the
$L^2$-norms with respect to the volume form
$dv_{(T\mM,g^{T\mM})}$, i.e., for any $s\in\Gamma(W
(\mF,\mF_1^\perp,\mF_2^\perp )   )$ with compact support, one has
 \begin{align}\label{k1}
 \|s\|_0^2:=\int_{\mM}\langle s, s\rangle_{\beta,\varepsilon}
 dv_{(T\mM,g^{T\mM})},
 \end{align}
 where the subscripts ``$\beta,\ \varepsilon$'' indicate that the pointwise
 inner product  is induced from $g_{\beta,\varepsilon}^{T\mM}$.

 From (\ref{m56}) and (\ref{k1}), one sees that the operators
 which are formally self-adjoint with respect to the usual
 $L^2$-norm, which is associated with the volume form
 $dv_{(T\mM,g_{\beta,\varepsilon}^{T\mM})}$, is still formally
 self-adjoint with respect to the $L^2$-norm defined in
 (\ref{k1}).

By (\ref{1.50}), one knows that when $\beta$, $\varepsilon>0$ are
sufficiently small,   the following identity holds on $U_{\alpha}(\mF_2^\perp)$:
 \begin{align}\label{m2}
\left(D^{\mF, \beta,\varepsilon}\right)^2=-\Delta^{\mF, \beta,\varepsilon}+\frac{k^{\mF}}{4\beta^2}
+O\left(\frac{1}{\beta}+\frac{\varepsilon^2}{\beta^2}  \right).
 \end{align}

 Let $h_1,\, \cdots,\, h_{\dim \mM}$ be an oriented orthonormal basis of
$(T\mM,g_{\beta,\varepsilon}^{T\mM})$.  Then
for any $s\in\Gamma(W (\mF,\mF_1^\perp,\mF_2^\perp )  )$ having compact support, the following identity holds,\footnote{From now on,   $\nabla^{\mF, \beta,\varepsilon}$ will denote the canonical connection   on $ W (\mF,\mF_1^\perp,\mF_2^\perp  )$. This should not be confused with the connection on $\mF$ as in (\ref{1.9a}), which will not appear in the rest of this section.}
 \begin{align}\label{3.1}
 \left\langle -\Delta^{\mF, \beta,\varepsilon} s,s\right\rangle
=\sum_{i=1}^{\dim \mM}\left\|
\nabla^{\mF, \beta,\varepsilon}_{h_i}s \right\|_0^2.
 \end{align}

On the other hand, for any $\sigma\in\Gamma((S(\mF\oplus\mF^\perp_1 ) )|_{s(M)})$,
 similarly as in (\ref{k1}), we define its $L^2$-norm by
 \begin{align}\label{k2}
 \|\sigma\|_0^2:=\int_{s(M)}\langle \sigma, \sigma\rangle_{\beta,\varepsilon}
 dv_{ s(M)},
 \end{align}
where, as in (\ref{111}), $dv_{ s(M)}$ is the volume form on $s(M)$ associated to the restricted metric from $g^{T\mM}|_{s(M)}$. 

In what follows, we will also denote $dv_{(T\mM,g^{T\mM})}$ by $dv_{\mM}$ as in  (\ref{111}). 

\subsection{An embedding from sections on $s(M)$ to sections on
$\mM$ }\label{s2.3}

Recall that $\Lambda ^*(\mF^\perp_2)=\oplus_{i=0}^{{\rm
rk}(\mF_2^\perp)}\Lambda^i(\mF^\perp_2)$, with
$\Lambda^0(\mF^\perp_2)={\bf C}$ (or ${\bf R}$ in the case where  we consider real operators). Let
 \begin{align}\label{3.2}
 Q:\Lambda^* (\mF^\perp_2) \rightarrow
\Lambda^0(\mF^\perp_2)={\bf C}
 \end{align}
 denote the corresponding orthogonal projection. Let
 \begin{align}\label{3.3}
 i_Q:\Lambda^0(\mF^\perp_2) \hookrightarrow
\Lambda ^*(\mF^\perp_2)
 \end{align}
 denote the canonical inclusion. In view of (\ref{1.38}) and (\ref{1.41}), the projection $Q$ and the
 embedding $i_Q$ induce the following canonical orthogonal projection and
 embedding, which we will denote by the same notations,
 \begin{align}\label{3.4}
 Q:W \left(\mF,\mF_1^\perp,\mF_2^\perp \right) 
\rightarrow S\left(\mF\oplus\mF^\perp_1\right) ,
 \end{align}
   \begin{align}\label{3.5}
 i_Q:S\left(\mF\oplus\mF_1^\perp\right) \hookrightarrow
W\left(\mF,\mF_1^\perp,\mF_2^\perp \right) .
 \end{align}


 Let $^Q
\nabla^{\mF, \beta,\varepsilon}$ be the induced  connection on
$S(\mF\oplus\mF^\perp_1) $ defined by
 \begin{align}\label{3.6}
^Q \nabla^{\mF, \beta,\varepsilon}=Q\nabla^{\mF, \beta,\varepsilon}i_Q.
 \end{align}
Clearly, $^Q
\nabla^{\mF, \beta,\varepsilon}$ is a Euclidean connection.

Let $\sigma\in \Gamma((S(\mF\oplus\mF^\perp_1) )|_{s(M)})$. For
 any $(x,Z)\in U_\alpha(\mF_2^\perp)$, let $\tau\sigma (x,Z)\in (S(\mF\oplus\mF^\perp_1)  )|_{\psi(x,Z)}$ be the parallel transport of
 $\sigma(x)$
 along the geodesic $(x,tZ),\ 0\leq t\leq 1$, with respect to the connection $^Q \nabla^{\mF, \beta,\varepsilon}$.

 Let $\gamma $ be a smooth function on ${\bf R}$ such that
$\gamma(b)=1 $ if $b\leq \frac{\alpha}{3}$, while  $\gamma(b)=0 $
if $b\geq \frac{2\alpha}{3}$.

 For $T>0$,  $x\in s(M)$, set
\begin{align}\label{m21}
\alpha_{T }(x)=\int_{\mM_x}\exp\left(- {T|
 Z|^2} \right) \gamma^2\left( {|Z|} \right)dv_{\mM_x}(Z).
\end{align}
Clearly, $\alpha_T (x)$ is constant on $s(M)$, which
we will denote by $\alpha_{T }$.

Inspired by \cite[Definition 9.4]{BL91}, for $T>0$,
  let
$$J_{T,\beta,\varepsilon }:\Gamma\left((S(\mF\oplus\mF^\perp_1) )|_{s(M)}\right) \longrightarrow \Gamma\left(W
(\mF,\mF_1^\perp,\mF_2^\perp ) \right)$$ be the embedding defined by
\begin{align}\label{m22}
 J_{T,\beta,\varepsilon }:\sigma \mapsto \left.\left(J_{T,\beta,\varepsilon}\sigma\right)\right|_{\psi(x,  Z)}=
\left( k(x,  Z) \alpha_{T }
\right)^{-\frac{1}{2}}\gamma\left( {|Z|} \right)\exp\left(- \frac{T|Z|^2}{2 }\right)i_Q
({\tau\sigma}(x,  Z)) .
\end{align}

By the definition of $\gamma$, one sees that $J_{T ,\beta,\varepsilon}$
is well-defined. Moreover, in view of (\ref{111}), (\ref{k1}),
(\ref{k2}), (\ref{m21}) and (\ref{m22}), one sees that   $ J_{T ,\beta,\varepsilon}$ is an isometric embedding.

Clearly, any $J_{T,\beta,\varepsilon } \sigma$ has compact support in
$\mM_{2 \alpha/3} $.  Let $E_{T,\beta,\varepsilon }'$
denote the image of $\Gamma ((S(\mF\oplus\mF^\perp_1)  )|_{s(M)})$ under
 $J_{T,\beta,\varepsilon }$. Let $p_{T ,\beta,\varepsilon}$ denote the orthogonal
 projection from the $L^2$-completion of $\Gamma(W (\mF,\mF_1^\perp,\mF_2^\perp  )  )$
to the $L^2$-completion of $E_{T,\beta,\varepsilon }'$, which we denote by   $ {E}_{T,\beta,\varepsilon }$.

\subsection{An estimate  for    $\|p_{T,\beta,\varepsilon }D^{\mF, \beta,\varepsilon} p_{T,\beta,\varepsilon }\|^2_0$}\label{s2.4}

Let $f_1,\,\cdots,\, f_{q+q_1}$ be an   orthonormal basis
of $(\mF\oplus\mF^\perp_1)|_{s(M)}$ with respect to $(g^\mF\oplus
g^{\mF_1^\perp})|_{s(M)}$, where $f_1,\,\cdots,\, f_q$ is an  
orthonormal basis of $\mF|_{s(M)}$ and thus $f_{q+1},\,\cdots,\,
f_{q+q_1}$ is an   orthonormal basis of
$\mF_1^\perp|_{s(M)}$. Let $e_1,\,\cdots,\, e_{q_2}$ be an
  orthonormal basis of $\mF^\perp_2|_{s(M)}$ with respect
to $g^{\mF_2^\perp}|_{s(M)}$.

For any $f\in (\mF \oplus \mF_1^\perp)|_{s(M)}$ (resp.
$e\in\mF^\perp_2|_{s(M)}$), let $\tau f\in\Gamma(\mF\oplus
\mF_1^\perp)$ (resp. $\tau e\in\Gamma(\mF^\perp_2)$) be  such that for any $(x,Z)\in U_{\alpha}(\mF_2^\perp)$, $\tau f
|_{\psi(x,Z)}$ (resp.   $\tau e|_{\psi(x,Z)}$) is the parallel transport of $f_x$
(resp.
 $e_x$) along the geodesic $ {(x,tZ)},\, 0\leq t\leq 1,$ with respect to the Euclidean 
connection $(p+p^\perp_1)\nabla^{T\mM,\beta,\varepsilon}(p+p^\perp_1)$
(resp. $\nabla^{\mF^\perp_2,\beta,\varepsilon}=\nabla^{\mF^\perp_2 }$).

Clearly, $\beta^{-1}\tau f_i$ ($1\leq i\leq q$),
$\varepsilon \tau f_j$ ($q+1\leq j\leq q+q_1$)
and $ \tau e_k$ ($1\leq k\leq q_2$) form an orthonormal
basis of $(T\mM,g_{\beta,\varepsilon}^{T\mM })$.

Let $\tau Z\in\Gamma(\psi(U_\alpha(\mF_2^\perp)))$ be the tautological section defined by
\begin{align}\label{12.6mb}
(\tau Z)|_{\psi(x,Z)}=\sum_{k=1}^{q_2}z_k\,\tau e_k,
\end{align}
with $Z=\sum_{k=1}^{q_2}z_k\,e_k\in\mF_2^\perp|_x$. 
In case of no confusion, we also denote $\tau Z$ by $Z$.  

Let $c_{\beta,\varepsilon}(\cdot)$ be the Clifford action associated to $g^{T\mM}_{\beta,\varepsilon}$. For any $X,\ Y\in T\mM$, one has
\begin{align}\label{3.8d}
c_{\beta,\varepsilon}(X)c_{\beta,\varepsilon}(Y)+c_{\beta,\varepsilon}(Y)c_{\beta,\varepsilon}(X)=-2\langle X,Y\rangle_{g^{T\mM}_{\beta,\varepsilon}}.
\end{align}

By (\ref{1.43}), one has
\begin{multline}\label{3.8}
D^{\mF, \beta,\varepsilon}=\beta^{-1}\sum_{i=1}^{q}  c_{\beta,\varepsilon}\left(\beta^{-1}\tau
f_i\right)\nabla_{\tau f_i}^{\mF,  \beta,\varepsilon}  +
\varepsilon \sum_{k=q+1}^{q+q_1} c_{\beta,\varepsilon}\left(\varepsilon\tau
f_k\right)\nabla_{\tau f_k}^{\mF,  \beta,\varepsilon} 
\\
+  \sum_{s=1}^{q_2} c_{\beta,\varepsilon}\left(\tau e_s\right)\nabla_{\tau
e_s}^{\mF,  \beta,\varepsilon}.
\end{multline}

We   state a key asymptotic estimate result
 for $\|p_{T,\beta,\varepsilon }D^{\mF, \beta,\varepsilon} p_{T,\beta,\varepsilon }\|^2_0$, when $T\rightarrow +\infty$ and $\beta$, $\varepsilon>0$ being small, as follows.

\begin{prop}\label{t12.4}
There exist $C'>0$,   $0<\delta,\ \beta_0,\ \varepsilon_0<1$ and  $T_0>0$ such that for any $0<\beta\leq\beta_0,\ 
0<\varepsilon\leq \varepsilon_0$, there exists $C_{\beta,\varepsilon}>0$ for which  the following inequality holds for any   $T\geq T_0$ and
$\sigma\in \Gamma ((S(\mF\oplus\mF^\perp_1)  )|_{s(M)})$:
\begin{multline}\label{12.6}
 \left\|p_{T,\beta,\varepsilon}D^{\mF, \beta,\varepsilon}
J_{T,\beta,\varepsilon}\sigma\right\|_0^2
\geq
 \int_{s(M)} \left(\frac{k^\mF }{4\beta^2}-
\frac{1}{4\beta^2}\sum_{i=1}^q \sum_{t=q+1}^{q+q_1} \left|p_1^\perp
\nabla^{ T\mM,\beta,
\varepsilon}_{f_t}\left(\nabla^{\mF_2^\perp}_{f_i}  Z\right) \right|^2\right)|\sigma|^2dv_{s(M)}
\\
- C'\left(\frac{1}{\beta}+\frac{\varepsilon^\delta}{\beta^4} \right)  \int_{s(M)}|\sigma|^2dv_{s(M)}
+  \frac{\varepsilon^{\delta} }{8\beta^2} \sum_{k=1}^q\int_{s(M)}  \left|
\,^Q\nabla^{\mF, \beta,\varepsilon}_{ f_k}(\tau\sigma)\right|^2 dv_{s(M)}
\end{multline} 
$$
+ \frac{\varepsilon^{2+\delta} }{16 } \sum_{k=q+1}^{q+q_1}\int_{s(M)}  \left|
\,^Q\nabla^{\mF, \beta,\varepsilon}_{ f_k}(\tau\sigma)\right|^2 dv_{s(M)}
-\frac{C_{\beta,\varepsilon}}{\sqrt{T}} \int_{s(M)} \left(
|\sigma|^2+\sum_{k=1}^{q+q_1}\left|
\,^Q\nabla^{\mF, \beta,\varepsilon}_{
f_k}(\tau\sigma)\right|^2\right) dv_{s(M)}.
$$
\end{prop}

\begin{rem}\label{t1211}
In the right hand side of (\ref{12.6}), since $Z|_{s(M)}\equiv 0$ and $(\tau f_j)|_{s(M)}=f_j$ for any $1\leq j\leq q+q_1$, one verifies by (\ref{12.6mb}) that for any $1\leq i\leq q$, $q+1\leq t\leq q+q_1$, the following identity holds on $s(M)$,
\begin{align}\label{12.6m}
p_1^\perp
\nabla^{ T\mM,\beta,
\varepsilon}_{f_t}\left(\nabla^{\mF_2^\perp}_{f_i}  Z\right) :=\left.\left(p_1^\perp
\nabla^{ T\mM,\beta,
\varepsilon}_{\tau f_t}\left(\nabla^{\mF_2^\perp}_{\tau f_i}  Z\right) \right)\right|_{s(M)}=\sum_{k=1}^{q_2}f_i(z_k)\,p_1^\perp\nabla^{ T\mM,\beta,
\varepsilon}_{f_t}  \tau e_k ,
\end{align}
where   $f_i(z_k)$ is the restriction on $s(M)$ of   $\tau f_i(z_k) \in C^\infty(\psi(U_\alpha(\mF_2^\perp)))$.  Also, for any $1\leq j\leq q+q_1$, one denotes on $s(M)$ that 
\begin{align}\label{12.6ma}
^Q\nabla^{\mF, \beta,\varepsilon}_{
f_j}(\tau\sigma)=\left.\left(^Q\nabla^{\mF, \beta,\varepsilon}_{\tau
f_j}(\tau\sigma)\right)\right|_{s(M)} .
\end{align}
\end{rem}

\comment{

\begin{rem}\label{t12.5}
Proposition \ref{t12.4} holds on any Connes type fibration over $(M,F)$. However, the second term in the right hand 
side of (\ref{12.6}) becomes an obstruction to get the positivity of the term in the left hand side of (\ref{12.6}), which is our ultimate goal. 
Indeed, this obstruction comes from the fact that $\mF|_{s(M)}$ need not be included in $Ts(M)$. Thus 
if one writes $Z=\sum_{i=1}^{q_2}z_i\tau e_i$ near $s(M)$, then one need not have $f_i(z_j)=0$ on $s(M)$.\footnote{Indeed, one has on $s(M)$ that $p_1^\perp \nabla^{ T\mM,\beta, \varepsilon}_{f_t}(\nabla^{\mF_2^\perp}_{f_i}Z )= \sum_{j=1}^{q_2}f_i(z_j)p_1^\perp\nabla^{T\mM,\beta,\varepsilon}_{f_t}\tau e_j$.} This is the  key difference with respect to the situation considered in \cite[Chapters 8 and 9]{BL91}.\footnote{One may also want to consider the splitting $T\mM|_{s(M)}=Ts(M)\oplus N$ on $s(M)$, similarly as in \cite{BL91}, where $N$ is the  normal bundle orthogonal to $Ts(M)$ in  $T\mM|_{s(M)}$. However, then this splitting would depend on $\beta$ and $\varepsilon$ which would cause  other kinds of troubles.}
\end{rem}

}

The basic idea of the proof of Proposition \ref{t12.4} is very natural. 
Indeed, since $p_{T,\beta,\varepsilon }:L^2(W (\mF,\mF_1^\perp,\mF_2^\perp
)   )\rightarrow  {E}_{T,\beta,\varepsilon }$ is an orthogonal
projection, for any $\sigma\in\Gamma ((S(\mF\oplus\mF^\perp_1) )|_{s(M)})$, one has
\begin{align}\label{m55}
 \left\|p_{T,\beta,\varepsilon}D^{\mF, \beta,\varepsilon}
J_{T,\beta,\varepsilon}\sigma\right\|_0^2=\left\|
D^{\mF, \beta,\varepsilon}
 J_{T,\beta,\varepsilon}\sigma\right\|_0^2
 -\left\|\left(1-p_{T,\beta,\varepsilon}\right)D^{\mF, \beta,\varepsilon}
J_{T,\beta,\varepsilon}\sigma\right\|_0^2.
\end{align}


In view of (\ref{m56}) and (\ref{k1}), the operator
$D^{\mF, \beta,\varepsilon}$ is formally
self-adjoint with respect to the $L^2$-norm in (\ref{k1}).  Thus,
the first term in the right hand side of (\ref{m55})  can be
estimated by using (\ref{m2}) and (\ref{3.1}). So we need to estimate
the second term in the right hand side of (\ref{m55}), to make it as small as possible.

Set 
\begin{align}\label{i1}
I_1=\sum_{i\neq j,\ 1\leq
i,\, j\leq q}\left\langle \left(1-p_{T,\beta,\varepsilon}\right)c_{\beta,\varepsilon}\left(\beta^{-1}\tau
f_i\right) {\nabla}^{\mF,\beta,\varepsilon}_{\beta^{-1}\tau f_i}
J_{T,\beta,\varepsilon}\sigma, c_{\beta,\varepsilon}\left(\beta^{-1}\tau
f_j\right) {\nabla}^{\mF,\beta,\varepsilon}_{\beta^{-1}\tau f_j}
J_{T,\beta,\varepsilon}\sigma\right\rangle,
\end{align}
\begin{align}\label{i2}
I_2=\sum_{i\neq j,\ q+1\leq i,\, j\leq
q+q_1}\left\langle \left(1-p_{T,\beta,\varepsilon}\right)c_{\beta,\varepsilon}(\varepsilon\tau
f_i) {\nabla}^{\mF,\beta,\varepsilon}_{\varepsilon\tau f_i}
J_{T,\beta,\varepsilon}\sigma, c_{\beta,\varepsilon}(\varepsilon\tau
f_j) {\nabla}^{\mF,\beta,\varepsilon}_{\varepsilon\tau f_j}
J_{T,\beta,\varepsilon}\sigma\right\rangle,
\end{align}
\begin{align}\label{i3}
I_3=\sum_{i\neq j,\ 1\leq i,\, j\leq q_2}\left\langle
\left(1-p_{T,\beta,\varepsilon}\right)c_{\beta,\varepsilon}(\tau
e_i) {\nabla}^{\mF,\beta,\varepsilon}_{\tau e_i}
J_{T,\beta,\varepsilon}\sigma, c_{\beta,\varepsilon}(\tau
e_j) {\nabla}^{\mF,\beta,\varepsilon}_{\tau e_j}
J_{T,\beta,\varepsilon}\sigma\right\rangle,
\end{align}
\begin{align}\label{i4}
I_4=2 \sum_{i=1}^q\sum_{j=q+1}^{q+q_1} {\rm Re}\left( \left\langle \left(1-p_{T,\beta,\varepsilon}\right)c_{\beta,\varepsilon}\left(\beta^{-1}\tau
f_i\right) {\nabla}^{\mF,\beta,\varepsilon}_{\beta^{-1}\tau f_i}
J_{T,\beta,\varepsilon}\sigma, c_{\beta,\varepsilon}(\varepsilon\tau
f_j) {\nabla}^{\mF,\beta,\varepsilon}_{\varepsilon\tau f_j}
J_{T,\beta,\varepsilon}\sigma\right\rangle\right),
\end{align}
\begin{align}\label{i5}
I_5=2
\sum_{i=1}^q\sum_{j=1}^{q_2} {\rm Re}\left( \left\langle
\left(1-p_{T,\beta,\varepsilon}\right)c_{\beta,\varepsilon}\left(\beta^{-1}\tau
f_i\right) {\nabla}^{\mF,\beta,\varepsilon}_{\beta^{-1}\tau f_i}
J_{T,\beta,\varepsilon}\sigma, c_{\beta,\varepsilon}(\tau
e_j) {\nabla}^{\mF,\beta,\varepsilon}_{\tau e_j}
J_{T,\beta,\varepsilon}\sigma\right\rangle\right),
\end{align}
\begin{align}\label{i6}
I_6=2
\sum_{i=q+1}^{q+q_1}\sum_{j=1}^{q_2} {\rm Re}\left( \left\langle
\left(1-p_{T,\beta,\varepsilon}\right)c_{\beta,\varepsilon}(\varepsilon\tau
f_i) {\nabla}^{\mF,\beta,\varepsilon}_{\varepsilon\tau
f_i} J_{T,\beta,\varepsilon}\sigma, c_{\beta,\varepsilon}(\tau
e_j) {\nabla}^{\mF,\beta,\varepsilon}_{\tau e_j}
J_{T,\beta,\varepsilon}\sigma\right\rangle\right) .
\end{align}

By (\ref{3.8}) and (\ref{i1})-(\ref{i6}), one has
\begin{multline}\label{3.8a}
\left\|\left(1-p_{T,\beta,\varepsilon}\right)D^{\mF , \beta,\varepsilon}
J_{T, \beta,\varepsilon}
 \sigma\right\|_0^2
=\sum_{k=1}^6 I_k
+\sum_{i=1}^q
\left\|\left(1-p_{T,\beta,\varepsilon}\right)c_{\beta,\varepsilon}\left(\beta^{-1}\tau
f_i\right) {\nabla}^{\mF, \beta,\varepsilon}_{\beta^{-1}\tau f_i}
J_{T,\beta,\varepsilon}\sigma\right\|_0^2
\\
+\sum_{i=q+1}^{q+q_1}
\left\|\left(1-p_{T,\beta,\varepsilon}\right)c_{\beta,\varepsilon}(\varepsilon\tau
f_i) {\nabla}^{\mF, \beta,\varepsilon}_{\varepsilon\tau f_i}
J_{T,\beta,\varepsilon}\sigma\right\|_0^2
+ \sum_{i=1}^{q_2}
\left\|\left(1-p_{T,\beta,\varepsilon}\right)c_{\beta,\varepsilon}(\tau
e_i) {\nabla}^{\mF, \beta,\varepsilon}_{\tau e_i}
J_{T,\beta,\varepsilon}\sigma\right\|_0^2
.
\end{multline}

Naturally, we need to study the behaviour when $T\rightarrow +\infty$ of each term in the right hand side of (\ref{3.8a}).  
Due to the Gaussian factor $\exp(-T|Z|^2/2)$  in (\ref{m22}), one sees as in \cite[Chapters 8 and 9]{BL91} that when $T\rightarrow +\infty$, all terms in (\ref{3.8a}) localize onto $s(M)$.  All one need is to choose   the rescaling factors $\beta$, $\varepsilon$ conveniently such that   the estimate goes as desired.  For this   the geometric nature of the Connes  fibration plays an essential role.

  The fact that the right hand side of (\ref{3.8a}) has nine terms, with each term  further  splits   into   four or even more terms in the process of estimation,  partly explains the length of the computations, which are purely routine and elementary.  

\comment{

Proposition \ref{t12.4} will be applied to a specific Connes type fibration in Section \ref{s2.8a} so that one can go further to get a positivity result for the left hand side of (\ref{12.6}). 
The reader who wishes to see directly that positivity can skip temporarily the next three subsections where a detailed proof of Proposition \ref{t12.4} is given.

}

\subsection{Estimates of  the terms $I_k$, $1\leq k\leq 6$, Part I}\label{s2.5}

Before going on, we set a notational  convention: in what follows,
by $O(|Z|^2)$ and $O(\frac{1}{\sqrt{T}})$, we will mean
$O_{\beta,\varepsilon} (|Z|^2)$ and $O_{\beta,\varepsilon}(\frac{1}{\sqrt{T}})$,
i.e., the associated estimating constants may depend on $\beta>0$ and
$\varepsilon>0$. While for other $O(\cdots)$ terms, the
corresponding estimating constants will not depend on $\beta>0$ and
$\varepsilon>0$, unless there appear the  subscripts ``$\beta$'' and/or ``$\varepsilon$''
which will indicate that the corresponding estimating coefficient   will depend  on $\beta$ and/or $\varepsilon$.

For brevity, let $f_{T }$ be the smooth function on
$\mM$ defined by that on any $(x,Z)\simeq \psi(x,Z)$, one has, 
  \begin{align}\label{3.9}
 f_{T}(x,  Z)=\left( k(x,  Z)
\alpha_{T }
\right)^{-\frac{1}{2}}\gamma\left( {|Z|} \right)\exp\left(-\frac{T|Z|^2}{2 }\right).
 \end{align}
 Then one can rewrite $J_{T,\beta,\varepsilon}\sigma$ in (\ref{m22}) as
\begin{align}\label{3.10}
\left(J_{T,\beta,\varepsilon} \sigma\right)(x, Z)=f_{T }(x,
Z) i_Q ({\tau\sigma}(x,  Z )).
 \end{align}

 From now on, in case of no confusion, we will omit
 $i_Q$.


 \begin{lemma}\label{t11.1} (i) For any $\sigma\in \Gamma ((S(\mF\oplus\mF^\perp_1)  )|_{s(M)})$ and any $f\in C^\infty(\mM)$ with
${\rm Supp}(f)\subset \psi(U_\alpha(\mF_2^\perp))$, one has
\begin{align}\label{11.1}
\left( p_{T,\beta,\varepsilon}(f\,\tau\sigma )\right)(x,Z) = \left(
\int_{\mM_x} f_{T }(x,  Z') f(x,Z'){k(x,  Z')}
 dv_{\mM_x}(  Z')\right)\left( J_{T,\beta,\varepsilon} \sigma\right)(x,
Z);
\end{align}
(ii) For any $u\in\Gamma (W (\mF,\mF_1^\perp,\mF_2^\perp ) )$ with
${\rm Supp}(u)\subset \psi(U_\alpha(\mF_2^\perp))$, one has
\begin{align}\label{11.2}
  p_{T,\beta,\varepsilon}\left(f_{T } u\right)
  =J_{T,\beta,\varepsilon} \left((Qu)|_{s(M)}\right) +p_{T,\beta,\varepsilon}\left(O_{\beta,\varepsilon}(|Z|)\right).
\end{align}
 \end{lemma}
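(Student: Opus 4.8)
The plan is to exploit that $J_{T,\beta,\varepsilon}$ is an isometric embedding whose image has $L^2$-closure $E_{T,\beta,\varepsilon}$, so that the orthogonal projection factors as $p_{T,\beta,\varepsilon}=J_{T,\beta,\varepsilon}\,J_{T,\beta,\varepsilon}^{*}$. Both assertions then reduce to identifying $J_{T,\beta,\varepsilon}^{*}$ on the section in question, which I would do by pairing with an arbitrary $J_{T,\beta,\varepsilon}\rho$, $\rho\in\Gamma((S(\mF)\widehat{\otimes}\Lambda^{*}(\mF_1^\perp)\otimes\phi_1(\mF_1^\perp))|_{s(M)})$, and invoking three elementary facts: (a) $i_Q$ is an isometric inclusion and $Q=i_Q^{*}$; (b) the connection ${}^Q\nabla^{\mF,\phi_1(\mF_1^\perp),\beta,\varepsilon}$ is Euclidean, so its parallel transport $\tau$ along the geodesics $(x,tZ)$ is a fibrewise isometry for $\langle\cdot,\cdot\rangle_{\beta,\varepsilon}$, giving $\langle\tau\rho(x,Z),\tau\sigma(x,Z)\rangle_{\beta,\varepsilon}=\langle\rho(x),\sigma(x)\rangle_{\beta,\varepsilon}$; (c) the disintegration $dv_{\mM}(x,Z)=k_c(x,Z)\,dv_{\mM_x}(Z)\,dv_{s(M)}(x)$ of (\ref{111}) together with the normalization
\[
\int_{\mM_x}f_T(x,Z)^{2}k_c(x,Z)\,dv_{\mM_x}(Z)=\alpha_T^{-1}\int_{\mM_x}\gamma^{2}(|Z|)\,\exp(-T|Z|^{2})\,dv_{\mM_x}(Z)=1,
\]
which follows from (\ref{m21}) and (\ref{3.9}) and is exactly why $J_{T,\beta,\varepsilon}$ is isometric.

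For (i): writing $J_{T,\beta,\varepsilon}\rho=f_T\,i_Q\tau\rho$ and, recalling that $i_Q$ is suppressed, $f\,\tau\sigma$ for $f\,i_Q\tau\sigma$, the pointwise pairing is $f_T(x,Z)f(x,Z)\langle\tau\rho(x,Z),\tau\sigma(x,Z)\rangle_{\beta,\varepsilon}$, which by (a) and (b) equals $f_T(x,Z)f(x,Z)\langle\rho(x),\sigma(x)\rangle_{\beta,\varepsilon}$. Integrating with (c) gives $\langle J_{T,\beta,\varepsilon}\rho,f\,\tau\sigma\rangle_0=\int_{s(M)}g(x)\langle\rho(x),\sigma(x)\rangle_{\beta,\varepsilon}\,dv_{s(M)}$ with $g(x):=\int_{\mM_x}f_T(x,Z')f(x,Z')k_c(x,Z')\,dv_{\mM_x}(Z')$, hence $J_{T,\beta,\varepsilon}^{*}(f\,\tau\sigma)=g\,\sigma$. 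Applying $J_{T,\beta,\varepsilon}$ and using that it is $C^\infty(s(M))$-linear — the scalar $g(x)$ commutes with the parallel transport $\tau$ — yields $p_{T,\beta,\varepsilon}(f\,\tau\sigma)(x,Z)=g(x)\,(J_{T,\beta,\varepsilon}\sigma)(x,Z)$, which is (\ref{11.1}).

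For (ii): set $v:=(Qu)|_{s(M)}\in\Gamma((S(\mF)\widehat{\otimes}\Lambda^{*}(\mF_1^\perp)\otimes\phi_1(\mF_1^\perp))|_{s(M)})$ and decompose, on $\psi_c(U_\alpha(\mF_2^\perp))$, $Qu(x,Z)=\tau v(x,Z)+w(x,Z)$. Since $Qu|_{s(M)}=v=\tau v|_{Z=0}$, the smooth compactly supported section $w$ vanishes on $s(M)$, hence $w=O_{\beta,\varepsilon}(|Z|)$ by Taylor's formula. Using $Q=i_Q^{*}$,
\[
\langle J_{T,\beta,\varepsilon}\rho,\,f_T\,u\rangle_0=\int_{\mM}f_T^{2}\,\langle\tau\rho,\tau v\rangle_{\beta,\varepsilon}\,dv_{\mM}+\int_{\mM}f_T^{2}\,\langle\tau\rho,w\rangle_{\beta,\varepsilon}\,dv_{\mM};
\]
by (b) and the normalization in (c) the first term equals $\int_{s(M)}\langle\rho,v\rangle_{\beta,\varepsilon}\,dv_{s(M)}=\langle J_{T,\beta,\varepsilon}\rho,J_{T,\beta,\varepsilon}v\rangle_0$, and the second equals $\langle J_{T,\beta,\varepsilon}\rho,\,f_T\,i_Q w\rangle_0$. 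Since $\rho$ is arbitrary, $f_T u-J_{T,\beta,\varepsilon}v-f_T\,i_Q w$ is orthogonal to $E_{T,\beta,\varepsilon}$, i.e. $p_{T,\beta,\varepsilon}(f_T u)=p_{T,\beta,\varepsilon}(J_{T,\beta,\varepsilon}v)+p_{T,\beta,\varepsilon}(f_T\,i_Q w)=J_{T,\beta,\varepsilon}v+p_{T,\beta,\varepsilon}(f_T\,i_Q w)$, and $f_T\,i_Q w=O_{\beta,\varepsilon}(|Z|)$; this is (\ref{11.2}).

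I do not expect a genuine analytic obstacle here: the content is essentially bookkeeping around the factorization $p=JJ^{*}$. The two points needing care are that the fibre coordinate $Z$ is centred so that $Z=0$ corresponds to $s(M)$ — which is what makes $Qu-\tau((Qu)|_{s(M)})$ vanish to first order off $s(M)$ — and that all the structure maps $i_Q$, $Q$ and the transport $\tau$ are isometric, respectively adjoint, for the rescaled inner product $\langle\cdot,\cdot\rangle_{\beta,\varepsilon}$, so that the Gaussian weight $f_T$ is carried through the computation unchanged; the identity $\int_{\mM_x}f_T^{2}k_c\,dv_{\mM_x}=1$ is precisely what makes the leading terms come out with coefficient exactly $1$.
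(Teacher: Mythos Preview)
Your proof is correct and essentially the same as the paper's. The paper condenses both parts into a single explicit formula (equation (\ref{11.2c})) for $p_{T,\beta,\varepsilon}u$ valid for arbitrary $u$---namely $(p_{T,\beta,\varepsilon}u)(x,Z)=f_T(x,Z)\,\tau\!\int_{\mM_x}f_T(x,Z')k_c(x,Z')\,\tau^{-1}((Qu)|_{(x,Z')})\,dv_{\mM_x}(Z')$---which is exactly your $J_{T,\beta,\varepsilon}J_{T,\beta,\varepsilon}^{*}$ written out, and then reads off (i) and (ii); your pairing computation with an arbitrary $J_{T,\beta,\varepsilon}\rho$ is the same calculation unpacked. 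One small remark: in your last line for (ii) you write $f_T\,i_Qw=O_{\beta,\varepsilon}(|Z|)$, but $f_T$ carries the $T$-dependent factor $\alpha_T^{-1/2}$; the paper's notation $p_{T,\beta,\varepsilon}(O_{\beta,\varepsilon}(|Z|))$ is to be read as ``$p_{T,\beta,\varepsilon}$ applied to $f_T$ times a section that is $O_{\beta,\varepsilon}(|Z|)$'', which is precisely what you obtain and what is used downstream (the pairing against another $f_T$ then produces the $O(1/\sqrt{T})$ terms).
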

 \begin{proof} Take any $u\in\Gamma (W (\mF,\mF_1^\perp,\mF_2^\perp ) )$.
Then for any $(x,Z)\in U_\alpha(\mF_2^\perp)$, $(Qu)|_{\psi(x,Z)}$ determines a unique
element $u'\in(S(\mF\oplus\mF^\perp_1) )|_{x}$ such that
 $(\tau u')|_{\psi(x,Z)}=(Qu)|_{\psi(x,Z)}$. We denote this element by
 $\tau^{-1}((Qu)|_{(x,Z)})$.

 Then one verifies easily that (compare with \cite[(9.6) and
(9.13)]{BL91})
\begin{align}\label{11.2c}
 \left( p_{T,\beta,\varepsilon}  u\right)(x,Z)
  =f_{T }(x,Z)\left( \tau \int_{\mM_x}f_{T }(x,Z')k(x,Z')
 \, \tau^{-1}\left((Qu)|_{(x,Z')}\right)dv_{\mM_x}(Z')\right)(x,Z).
\end{align}

Formulas (\ref{11.1}) and (\ref{11.2}) follow  from (\ref{11.2c})
easily.
 \end{proof}

 \begin{lemma}\label{t11.2} For any $X\in
 \Gamma((\mF\oplus\mF_1^\perp)|_{s(M)})$, one has
\begin{align}\label{11.3}
  p_{T,\beta,\varepsilon} c_{\beta,\varepsilon}(\tau X)=c_{\beta,\varepsilon}(\tau X) p_{T,\beta,\varepsilon}.
\end{align}
 \end{lemma}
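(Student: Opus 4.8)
The plan is to show that the fibrewise Clifford action $c_{\beta,\varepsilon}(\tau X)$ leaves the subspace $E_{T,\beta,\varepsilon}$ invariant, and then to deduce (\ref{11.3}) from the skew-adjointness of $c_{\beta,\varepsilon}(\tau X)$. First I would record two elementary facts about $c_{\beta,\varepsilon}(\tau X)$. Since $X$ is a section of $(\mF\oplus\mF_1^\perp)|_{s(M)}$, its parallel transport $\tau X$ is a section of $\mF\oplus\mF_1^\perp$ over $\psi_c(U_\alpha(\mF_2^\perp))$, so it has no component along $\mF_2^\perp=T^V\mM$; consequently $c_{\beta,\varepsilon}(\tau X)$ acts only on the $S(\mF)\widehat{\otimes}\Lambda^*(\mF_1^\perp)\otimes\phi_1(\mF_1^\perp)$-factors of $W(\mF,\mF_1^\perp,\mF_2^\perp)\otimes\phi_1(\mF_1^\perp)$ and therefore commutes with the algebraic maps $Q$ and $i_Q$ of (\ref{3.4})--(\ref{3.5}), which affect only the $\Lambda^*(\mF_2^\perp)$-factor. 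It also commutes with multiplication by the scalar function $f_T$ of (\ref{3.9}). In view of the expression (\ref{3.10}) for $J_{T,\beta,\varepsilon}\sigma$, invariance of $E_{T,\beta,\varepsilon}$ will follow once we know that
\[
c_{\beta,\varepsilon}(\tau X)\,(\tau\sigma)=\tau\bigl(c_{\beta,\varepsilon}(X|_{s(M)})\,\sigma\bigr),
\]
for then $c_{\beta,\varepsilon}(\tau X)\,J_{T,\beta,\varepsilon}\sigma=J_{T,\beta,\varepsilon}\bigl(c_{\beta,\varepsilon}(X|_{s(M)})\,\sigma\bigr)\in E_{T,\beta,\varepsilon}'$.

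To establish this intertwining identity I would fix $(x,Z)\in U_\alpha(\mF_2^\perp)$ and compare the two sides along the geodesic $t\mapsto(x,tZ)$, $0\le t\le c$. By Remark \ref{t1.4}, $\nabla^{\mF,\phi_1(\mF^\perp_1),\beta,\varepsilon}$ is the connection of a twisted Dirac operator, hence compatible with the full Clifford action: $\nabla^{\mF,\phi_1(\mF^\perp_1),\beta,\varepsilon}_Y(c_{\beta,\varepsilon}(W)w)=c_{\beta,\varepsilon}\bigl(\nabla^{T\mM,\beta,\varepsilon}_YW\bigr)w+c_{\beta,\varepsilon}(W)\,\nabla^{\mF,\phi_1(\mF^\perp_1),\beta,\varepsilon}_Yw$ for $Y,W\in\Gamma(T\mM)$. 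By definition $\tau X$ is parallel along $(x,tZ)$ for the Euclidean connection $(p+p_1^\perp)\nabla^{T\mM,\beta,\varepsilon}(p+p_1^\perp)$, and the velocity of that geodesic is vertical, so the $\nabla^{T\mM,\beta,\varepsilon}$-covariant derivative of $\tau X$ along $(x,tZ)$ is a section of $\mF_2^\perp$. The key observation is that $Q\,c_{\beta,\varepsilon}(V)\,i_Q=0$ for every $V\in\Gamma(\mF_2^\perp)$, since $c_{\beta,\varepsilon}(V)=V^*\wedge-i_V$ shifts the $\Lambda^*(\mF_2^\perp)$-degree by $\pm1$ and hence sends $\Lambda^0(\mF_2^\perp)$ into its orthogonal complement. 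Combining these with $^Q\nabla^{\mF,\phi_1(\mF^\perp_1),\beta,\varepsilon}=Q\,\nabla^{\mF,\phi_1(\mF^\perp_1),\beta,\varepsilon}\,i_Q$ and with the fact that $c_{\beta,\varepsilon}(\tau X)$ commutes with $Q$ and $i_Q$, one finds that along $(x,tZ)$ the section $c_{\beta,\varepsilon}(\tau X)\,\tau\sigma$ has vanishing $^Q\nabla^{\mF,\phi_1(\mF^\perp_1),\beta,\varepsilon}$-covariant derivative, i.e. it is $^Q\nabla^{\mF,\phi_1(\mF^\perp_1),\beta,\varepsilon}$-parallel along the geodesic. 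Since at $t=0$ it equals $c_{\beta,\varepsilon}(X_x)\sigma(x)$, the intertwining identity follows.

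It then remains to conclude. The action $c_{\beta,\varepsilon}(\tau X)$ is skew-adjoint for the pointwise inner product $\langle\cdot,\cdot\rangle_{\beta,\varepsilon}$, hence for the $L^2$-inner product of (\ref{k1}); having just shown that it preserves $E_{T,\beta,\varepsilon}'$, it preserves its $L^2$-closure $E_{T,\beta,\varepsilon}$, and skew-adjointness then forces it to preserve $E_{T,\beta,\varepsilon}^{\perp}$ as well. Therefore $c_{\beta,\varepsilon}(\tau X)$ commutes with the orthogonal projection $p_{T,\beta,\varepsilon}$, which is (\ref{11.3}). I expect the only non-formal point to be the compatibility step in the middle paragraph: one must check that the a priori obstructing $\mF_2^\perp$-valued term coming from $\nabla^{T\mM,\beta,\varepsilon}\tau X$ is killed by the compression $Q(\cdot)i_Q$; everything else is bookkeeping with the ${\bf Z}_2$-graded tensor structure of $W(\mF,\mF_1^\perp,\mF_2^\perp)\otimes\phi_1(\mF_1^\perp)$ and the elementary relation between skew-adjoint operators and orthogonal projections.
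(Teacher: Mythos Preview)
Your proof is correct and follows essentially the same approach as the paper's. Both arguments hinge on the intertwining identity $c_{\beta,\varepsilon}(\tau X)\,\tau\sigma=\tau\bigl(c_{\beta,\varepsilon}(X)\sigma\bigr)$, established by showing that the left-hand side is $^Q\nabla^{\mF,\phi_1(\mF_1^\perp),\beta,\varepsilon}$-parallel along the vertical geodesic (using Clifford compatibility and the vanishing of $Q\,c_{\beta,\varepsilon}(V)\,i_Q$ for $V\in\mF_2^\perp$); the only stylistic difference is that the paper derives commutation with $p_{T,\beta,\varepsilon}$ via an explicit chain of inner-product identities, whereas you phrase the same conclusion as the general fact that a skew-adjoint operator preserving a closed subspace commutes with the orthogonal projection onto it.
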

 \begin{proof} For any $\sigma\in\Gamma ((S(\mF \oplus\mF_1^\perp) )|_{s(M)})$ and
 $X\in\Gamma ((\mF\oplus \mF_1^\perp)|_{s(M)})$, we claim that
\begin{align}\label{11.3a}
    c_{\beta,\varepsilon}(\tau X)\tau \sigma= \tau\left (c_{\beta,\varepsilon}(X)\sigma\right).
\end{align}
Indeed, it is easy to verify that
\begin{multline}\label{11.3b}
  ^Q  {\nabla}^{\mF,\beta,
 \varepsilon}_Z\left( c_{\beta,\varepsilon}(\tau
 X)\tau \sigma\right)=Q\left(c_{\beta,\varepsilon}\left( {\nabla}^{T\mM,\beta,\varepsilon}_Z(\tau X)\right)\tau \sigma\right)+c_{\beta,\varepsilon}(\tau
 X) \, ^Q  {\nabla}^{\mF,\beta,
 \varepsilon}_Z(\tau\sigma)\\
=c_{\beta,\varepsilon}\left(\left(p+p_1^\perp\right)\nabla^{T\mM,\beta,\varepsilon}_Z(\tau
X)\right)\tau \sigma =0.
\end{multline}
From (\ref{11.3b}), one sees that $c_{\beta,\varepsilon}(\tau X)\tau \sigma$ is the
parallel transport of $(c_{\beta,\varepsilon}(\tau X)\tau \sigma)|_{s(M)}=c_{\beta,\varepsilon}(  X)
\sigma$, from which (\ref{11.3a}) follows.

 Now for any $\sigma\in\Gamma ((S(\mF\oplus\mF_1^\perp)  )|_{s(M)})$ and $u\in\Gamma (W
(\mF,\mF_1^\perp,\mF_2^\perp ) 
 )$ with ${\rm Supp}(u)\subset
\psi(U_\alpha(\mF_2^\perp))$,  one verifies via (\ref{11.3a}) that
\begin{multline}\label{11.4}
 \left\langle p_{T,\beta,\varepsilon} c_{\beta,\varepsilon}(\tau X)u,J_{T,\beta,\varepsilon}
 \sigma\right\rangle= \left\langle   c_{\beta,\varepsilon}(\tau X)u,J_{T,\beta,\varepsilon}
 \sigma\right\rangle= -\left\langle   u,c_{\beta,\varepsilon}(\tau X)J_{T,\beta,\varepsilon}
 \sigma\right\rangle
\\
= -\left\langle   u,J_{T,\beta,\varepsilon}(c_{\beta,\varepsilon}(  X)
 \sigma)\right\rangle
 =-\left\langle p_{T,\beta,\varepsilon}  u,J_{T,\beta,\varepsilon}(c_{\beta,\varepsilon}(  X)
 \sigma)\right\rangle=-\left\langle p_{T,\beta,\varepsilon}  u,c_{\beta,\varepsilon}(\tau  X)J_{T,\beta,\varepsilon}
 \sigma\right\rangle
\\
=\left\langle c_{\beta,\varepsilon}(\tau  X)p_{T,\beta,\varepsilon}  u,J_{T,\beta,\varepsilon}
 \sigma\right\rangle,
\end{multline}
from which (\ref{11.3}) follows.
 \end{proof}

 For any $X\in
 \Gamma((\mF\oplus\mF_1^\perp)|_{s(M)})$,
 by  (\ref{11.3}), one finds
\begin{align}\label{3.12}
\left(1-p_{T,\beta,\varepsilon}\right) c_{\beta,\varepsilon}(\tau X)=c_{\beta,\varepsilon}(\tau
X)\left(1-p_{T,\beta,\varepsilon}\right).
 \end{align}

Let $f_i'$, $1\leq i\leq q$ (resp. $f_j'$, $q+1\leq j\leq q+q_1$)
be an orthonormal basis of $(\mF,g^{\mF})$ (resp.
$(\mF_1^\perp,g^{\mF_1^\perp})$) on $U_\alpha(\mF_2^\perp)$, which does not depend
on $\beta$ and $\varepsilon$, and which satisfies $f_i'|_{s(M)}=f_i$ (resp.
$f_j'|_{s(M)}=f_j$).

Without loss of generality, we assume that $f_1',\,\cdots,\,f_q'$ are lifted from corresponding elements on $M$. That is, there is an orthonormal basis $\widehat{f}_1,\,\cdots,\,\widehat{f}_q$ of $(F, g^F) $ such that
\begin{align}\label{3.12a}
 f'_i=\pi^*\widehat{f}_i,\ \ \ 1\leq i\leq q.
 \end{align}

\begin{lemma}\label{t11.3} The following asymptotic formulas at $(x,
Z)$ (i.e., $\psi(x,Z)$) with $x\in s(M)$, $Z\in \mF_2^\perp|_x$, hold near $s(M)$: (i) if
$1\leq i\leq q$, then
\begin{align}\label{12.1}
 \tau f_i=f_i'+ \sum_{m=q+1}^{q+q_1}O\left( \varepsilon^{2}
 |Z|\right)f_m'+O\left(|Z|^2\right);
 \end{align}
 (ii) if $q+1\leq i\leq q+q_1$, then
 \begin{align}\label{12.2}
 \tau f_i=f_i'+\sum_{j=1}^q
 O\left(\frac{|Z|}{\beta^2}\right)f_j'+\sum_{m=q+1}^{q+q_1}O(
 |Z|)f_m'+O\left(|Z|^2\right).
 \end{align}
\end{lemma}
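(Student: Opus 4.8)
The plan is to realize $\tau f_i$ as the value at $s=c$ of the solution of a linear first--order system along the geodesic $\gamma(s)=\exp^{\mM_x}(sZ)$, $0\le s\le c$, of (\ref{1311}), and to control that system by Lemma \ref{tf}. The velocity $\dot\gamma(s)$ is a section of $\mF_2^\perp$ along $\gamma$, parallel for the fibrewise Levi--Civita connection --- which, by (\ref{2.1}) and (\ref{2.3}), is the restriction of $\nabla^{\mF_2^\perp}=\nabla^{\mF_2^\perp,\beta,\varepsilon}$ --- and has constant length $|\dot\gamma(s)|\equiv|Z|$. Expanding $\tau f_i(\gamma(s))=\sum_{k=1}^{q+q_1}a_{ik}(s)f_k'(\gamma(s))$ with $a_{ik}(0)=\delta_{ik}$, and using that $\tau f_i$ is parallel for $\nabla'=(p+p_1^\perp)\nabla^{T\mM,\beta,\varepsilon}(p+p_1^\perp)$ while $\{f_k'\}$ is $g^{\mF}\oplus g^{\mF_1^\perp}$--orthonormal, one obtains
\begin{align}\label{eqpode}
\dot a_{ik}(s)=-\sum_{l=1}^{q+q_1}a_{il}(s)\,\big\langle\nabla^{T\mM,\beta,\varepsilon}_{\dot\gamma(s)}f_l',f_k'\big\rangle,
\end{align}
all inner products below being the unrescaled ones.

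The crucial input is the size of $\langle\nabla^{T\mM,\beta,\varepsilon}_{U_2}f_l',f_k'\rangle$ for $U_2\in\mF_2^\perp$. The block where both $f_l',f_k'\in\mF$ vanishes: the same Koszul--formula computation that gives (\ref{1.10}) yields $\langle\nabla^{T\mM,\beta,\varepsilon}_{U_2}f_i',f_k'\rangle=\langle\nabla^{TM}_{U_2}f_i',f_k'\rangle$ for $i,k\le q$, and by (\ref{1.7}) this equals $\tfrac12\big(\langle[U_2,f_i'],f_k'\rangle-\langle[U_2,f_k'],f_i'\rangle\big)$, because $\langle f_i',f_k'\rangle\equiv\delta_{ik}$, $\langle U_2,f_i'\rangle=0$, and $[f_i',f_k']\in\Gamma(\mF)$ is orthogonal to $U_2$ by the integrability (\ref{1.0}); moreover, since $f_i'=\pi^*\widehat f_i$ lies in $\mF\subset T^H\mM$ and projects to $\widehat f_i$ (cf. (\ref{3.12a})), it is $\pi$--related to $\widehat f_i$, so $\pi_*[U_2,f_i']=[0,\widehat f_i]=0$, i.e. $[U_2,f_i']\in\Gamma(\mF_2^\perp)$, whence $\langle[U_2,f_i'],f_k'\rangle=0$. (Lemma \ref{tf} only asserts this block to be $O(1)$; the vanishing is a feature of the lifted frame.) The remaining blocks are read off from Lemma \ref{tf} with the derivative in the $\mF_2^\perp$ direction: $\langle\nabla^{T\mM,\beta,\varepsilon}_{U_2}X,U_1\rangle=O(\varepsilon^2)$ by (\ref{f7}); $\langle\nabla^{T\mM,\beta,\varepsilon}_{U_2}U_1,X\rangle=O(\beta^{-2})$ and $\langle\nabla^{T\mM,\beta,\varepsilon}_{U_2}U_1,V_1\rangle=O(1)$ by (\ref{f8}); and the diagonal entries $\langle\nabla^{T\mM,\beta,\varepsilon}_{U_2}f_l',f_l'\rangle$ vanish by metric compatibility. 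Multiplying by $|\dot\gamma(s)|=|Z|$, the estimates being uniform over the compact set $\overline{\psi_c(U_\alpha(\mF_2^\perp))}$, the coefficients in (\ref{eqpode}) are $0$ when $f_l',f_k'\in\mF$, $O(\varepsilon^2|Z|)$ when $f_l'\in\mF$, $f_k'\in\mF_1^\perp$, $O(\beta^{-2}|Z|)$ when $f_l'\in\mF_1^\perp$, $f_k'\in\mF$, $O(|Z|)$ for the off--diagonal entries with $f_l',f_k'\in\mF_1^\perp$, and $0$ on the diagonal.

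Finally I would integrate (\ref{eqpode}) on $[0,c]$, with $c\le1$ fixed, and bootstrap: a first pass together with Gronwall gives $a_{ik}=\delta_{ik}+O_{\beta,\varepsilon}(|Z|)$; substituting this back, each integrated cross--term becomes a product of two $O(|Z|)$--entries and is hence $O_{\beta,\varepsilon}(|Z|^2)$, so $a_{ik}(c)=\delta_{ik}-\int_0^c\langle\nabla^{T\mM,\beta,\varepsilon}_{\dot\gamma(s)}f_i',f_k'\rangle\,ds+O_{\beta,\varepsilon}(|Z|^2)$. In case (i), $i\le q$: the $\mF$--component of $\tau f_i-f_i'$ has vanishing leading term (its coefficients lie in the block where both arguments are in $\mF$), hence is $O_{\beta,\varepsilon}(|Z|^2)$, while its $\mF_1^\perp$--component has leading term of size $O(\varepsilon^2|Z|)$; this is (\ref{12.1}). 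In case (ii), $q<i\le q+q_1$: the $\mF$--component is $O(\beta^{-2}|Z|)$ and the $\mF_1^\perp$--component $O(|Z|)$, both modulo $O_{\beta,\varepsilon}(|Z|^2)$; this is (\ref{12.2}). The one delicate step is the vanishing identity $\langle\nabla^{TM}_{U_2}f_i',f_k'\rangle=0$: without it the $\mF$--part of $\tau f_i$ would only be $O(|Z|)$, and the later estimates of Section \ref{s2.5} would break down --- this is precisely where the structure of the Connes fibration (that $\mF$ is the Bott lift of $F$, so that lifts of a frame of $F$ are genuinely $\pi$--horizontal), the identities (\ref{1.5}), and the integrability of $F$ enter. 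Throughout one must keep track of which $O$--constants are allowed to depend on $\beta,\varepsilon$, following the convention fixed at the beginning of Section \ref{s2.5}.
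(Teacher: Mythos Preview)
Your proposal is correct and follows essentially the same route as the paper. The paper also expands $\tau f_i$ in the frame $f_k'$, uses the parallel transport equation $(p+p_1^\perp)\nabla^{T\mM,\beta,\varepsilon}_Z(\tau f_i)=0$ to identify the first-order term as $c\langle f_i,\nabla^{T\mM,\beta,\varepsilon}_Z f_k'\rangle_x$, and then reads off the block sizes from Lemma~\ref{tf}; the key vanishing $\langle\nabla^{T\mM,\beta,\varepsilon}_{U_2}f_i',f_k'\rangle=0$ for $i,k\le q$ is exactly the paper's (\ref{3.12b})--(\ref{3.12c}), obtained by the same $\pi$-relatedness argument you give. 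Your ODE/Gronwall/bootstrap packaging is slightly more explicit about why the quadratic remainder may depend on $\beta,\varepsilon$, but the content is identical.
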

\begin{proof}
We write
\begin{align}\label{131aa}
 \tau f_i =f_i'+\sum_{k=1}^{q+q_1}\left\langle \tau
 f_i-f_i',f_k'\right\rangle f_k'.
\end{align}

Since
\begin{align}\label{131a}
\left(p+p_1^\perp\right)\nabla^{T\mM,\beta,\varepsilon}_Z(\tau f_i)=0,
\end{align}
  one has for $1\leq i,\, k\leq q$ that
\begin{multline}\label{131b}
\left\langle \tau f_i-f_i',f_k'\right\rangle_{(x,Z)}=Z\left(\left\langle
\tau f_i ,f_k'\right\rangle_{(x,Z)}\right)+O\left(|Z|^2\right)\\
=\left\langle \tau f_i
,\nabla_{Z}^{T\mM,\beta,\varepsilon}f_k'\right\rangle_{(x,Z)}+O\left(|Z|^2\right)
=\left\langle   f_i ,\nabla_Z^{T\mM,\beta,\varepsilon }f_k'\right\rangle_x
+O\left(|Z|^2\right),
\end{multline}
while for $1\leq i\leq q$, $q+1\leq k\leq q+q_1$, one has, by
(\ref{1.5}), (\ref{1.7}),
\begin{multline}\label{131c}
\left\langle \tau f_i-f_i',f_k'\right\rangle_{(x,  Z)}=
Z\left(\left\langle
\tau f_i ,f_k'\right\rangle_{(x,  Z)}\right)+O\left(|Z|^2\right)\\
=\beta^2\varepsilon^{2}\left\langle   f_i
,\nabla_Z^{T\mM,\beta,\varepsilon}f_k'\right\rangle_{x}+O\left(|Z|^2\right)
=O\left( \varepsilon^{2} |Z| \right) +O\left(|Z|^2\right).
\end{multline}

Now by (\ref{3.12a}), one has that for any  $e\in\Gamma(\mF_2^\perp)$ and $1\leq i\leq q$,
\begin{align}\label{3.12b}
  \left[e, f_i'\right]\in\Gamma\left(\mF_2^\perp\right),
\end{align}
 from which one verifies that for any $e\in\Gamma(\mF_2^\perp)$ and $1\leq i,\, k\leq q$,
\begin{align}\label{3.12c}
  \left\langle f_i',\nabla_e^{T\mM,\beta,\varepsilon}f_k'\right\rangle = \left\langle e,\nabla_{ f_i'}^{T\mM,\beta,\varepsilon}f_k'\right\rangle=0.
\end{align}

From  (\ref{131aa}),  (\ref{131b}), (\ref{131c}) and
(\ref{3.12c}), one gets (\ref{12.1}).

By proceeding as in (\ref{131b}), one sees that for $q+1\leq m\leq
q+q_1$, $1\leq k\leq q$,
\begin{multline}\label{11.15}
\left\langle \tau f_m-f_m',f_k'\right\rangle_{(x,Z)}=Z\left(\left\langle
\tau f_m ,f_k'\right\rangle_{(x,Z)}\right)+O\left(|Z|^2\right)\\
=\frac{1}{\beta^2\varepsilon^{2}}\left\langle   f_m
,\nabla_Z^{T\mM,\beta,\varepsilon}f_k'\right\rangle_{x}+O\left(|Z|^2\right)
=O\left(\frac{|Z|}{\beta^2}\right) +O\left(|Z|^2\right),
\end{multline}
while for $q+1\leq m,\,k\leq q+q_1$, one has
\begin{multline}\label{11.16}
\left\langle \tau f_m-f_m',f_k'\right\rangle_{(x,Z)}=Z\left(\left\langle
\tau f_m ,f_k'\right\rangle_{(x,Z)}\right)+O\left(|Z|^2\right)\\
= \left\langle   f_m
,\nabla_Z^{T\mM,\beta,\varepsilon}f_k'\right\rangle_{x}+O\left(|Z|^2\right)
=O\left( {|Z|} \right) +O\left(|Z|^2\right).
\end{multline}

From (\ref{131aa}),  (\ref{11.15}) and (\ref{11.16}), one gets (\ref{12.2}).
\end{proof}

\begin{lemma}\label{t11.4}  There exists   
  $C_{\beta,\varepsilon}>0$   such that the
following estimate holds near $s(M)$ for $|Z|\leq 2\alpha/3$: for any
$\sigma \in\Gamma ( (S(\mF\oplus\mF_1^\perp)  )|_{s(M)} )$, one has
\begin{multline}\label{12.3}
   \sum_{i=1}^{q+q_1}\left|\, ^Q\nabla ^{\mF,
  ,\beta,\varepsilon}_{ \tau
f_i}(\tau \sigma)\right|_{\psi(x,Z)}^2+\sum_{j=1}^{q_2}\left|\,
^Q\nabla ^{\mF, \beta,\varepsilon}_{ \tau e_j}(\tau
\sigma)\right|_{\psi(x,Z)}^2\\
\leq C_{\beta,\varepsilon}\left( \sum_{i=1}^{q+q_1}\left|\,
^Q\nabla ^{\mF,  \beta,\varepsilon}_{
f_i}(\tau \sigma)\right|_x^2+ |\sigma|_x^2\right).
 \end{multline}
  \end{lemma}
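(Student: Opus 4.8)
The plan is to reduce the pointwise bound $(\ref{12.3})$ to a Gr\"onwall estimate along the radial geodesics of the fibres $\mM_x$ of the Connes fibration, in the spirit of \cite[Chapters 8 and 9]{BL91}. For brevity write ${}^Q\nabla$ for ${}^Q\nabla^{\mF,\phi_1(\mF_1^\perp),\beta,\varepsilon}$. The structural fact that makes this work is that, by its very definition, $\tau\sigma$ is over $U_\alpha(\mF_2^\perp)$ the ${}^Q\nabla$-parallel extension of $\sigma|_{s(M)}$ along these geodesics. Letting $\rho$ denote the fibrewise unit radial vector field on $U_\alpha(\mF_2^\perp)$ (the unit geodesic spray of the $\mM_x$'s, centred along $s(M)$), this says ${}^Q\nabla_\rho(\tau\sigma)\equiv 0$ on $U_\alpha(\mF_2^\perp)\setminus s(M)$, and hence ${}^Q\nabla_V(\tau\sigma)|_x=0$ for every $x\in s(M)$ and every vertical $V\in\mF_2^\perp|_x$. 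Since ${}^Q\nabla$ is Euclidean, parallel transport is isometric, so $|\tau\sigma|\equiv|\sigma|$ as well.

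Fix $x\in s(M)$ and $Z\in\mF_2^\perp|_x$ with $0<|Z|\le 2\alpha/3$, let $\bar\gamma:[0,c|Z|]\to\mM_x$ be the unit-speed geodesic from $x$ to $\psi_c(x,Z)$ (so $\dot{\bar\gamma}=\rho|_{\bar\gamma}$ and $c|Z|\le 2\alpha/3$ since $c\le 1$), and set $A_i(s)={}^Q\nabla_{\tau f_i}(\tau\sigma)|_{\bar\gamma(s)}$ for $1\le i\le q+q_1$ and $B_j(s)={}^Q\nabla_{\tau e_j}(\tau\sigma)|_{\bar\gamma(s)}$ for $1\le j\le q_2$, using the parallel transports $\tau f_i$, $\tau e_j$ fixed in Section \ref{s2.4}. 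Then $A_i(0)={}^Q\nabla_{f_i}(\tau\sigma)|_x$ --- the quantity on the right of $(\ref{12.3})$ --- while $B_j(0)=0$ by the vanishing recorded above. Covariantly differentiating $A_i$, $B_j$ along $\bar\gamma$, using the torsion-free curvature identity $({}^Q\nabla^2\tau\sigma)(\rho,W)=({}^Q\nabla^2\tau\sigma)(W,\rho)+{}^QR(\rho,W)\tau\sigma$ together with ${}^Q\nabla_\rho(\tau\sigma)\equiv 0$, one gets a first-order linear ODE system of the schematic form
\[
\tfrac{D}{ds}(A,B)=\Theta_{\beta,\varepsilon}(s)\,(A,B)+O_{\beta,\varepsilon}(|\sigma|),
\]
where $D/ds$ is covariant differentiation along $\bar\gamma$ with respect to ${}^Q\nabla$, the matrix $\Theta_{\beta,\varepsilon}$ collects the components of $\nabla^{T\mM,\beta,\varepsilon}_\rho\tau f_i$, $\nabla^{T\mM,\beta,\varepsilon}_\rho\tau e_j$, $\nabla^{T\mM,\beta,\varepsilon}_{\tau f_i}\rho$, $\nabla^{T\mM,\beta,\varepsilon}_{\tau e_j}\rho$ in the frame $\{\tau f_k,\tau e_l\}$, and the inhomogeneity comes from ${}^QR(\rho,\,\cdot\,)\tau\sigma$ (using $|\tau\sigma|=|\sigma|$); all entries of $\Theta_{\beta,\varepsilon}$ and the $O_{\beta,\varepsilon}(|\sigma|)$ term are bounded on the compact set $\{|Z|\le 2\alpha/3\}\subset U_\alpha(\mF_2^\perp)$, with bounds depending on $\beta,\varepsilon$.

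Applying Gr\"onwall's inequality to this system over the interval $[0,c|Z|]\subset[0,2\alpha/3]$ with initial data $(A(0),B(0))=({}^Q\nabla_{f_i}(\tau\sigma)|_x,\,0)$ yields, for all such $s$,
\[
\textstyle\sum_i|A_i(s)|^2+\sum_j|B_j(s)|^2\le C_{\beta,\varepsilon}\bigl(\sum_i|{}^Q\nabla_{f_i}(\tau\sigma)|_x^2+|\sigma|_x^2\bigr),
\]
and evaluating at $s=c|Z|$, i.e.\ at $\psi_c(x,Z)$, is precisely $(\ref{12.3})$; compactness of $s(M)\cong M$ makes $C_{\beta,\varepsilon}$ uniform in $x$. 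The single delicate point --- and the reason the right-hand side of $(\ref{12.3})$ contains no ${}^Q\nabla_{\tau e_j}(\tau\sigma)$-terms --- is the vanishing $B_j(0)=0$, which is exactly the statement that the vertical covariant derivative of the radially parallel section $\tau\sigma$ is zero along $s(M)$; once that is in place, everything else is routine bookkeeping of bounded curvature and connection coefficients over $\{|Z|\le 2\alpha/3\}$.
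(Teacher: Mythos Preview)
Your Gr\"onwall strategy is a reasonable alternative, but the execution has a gap: the claim that ``all entries of $\Theta_{\beta,\varepsilon}$ \dots\ are bounded on the compact set $\{|Z|\le 2\alpha/3\}$'' is false. The unit radial field $\rho$ is singular along $s(M)$, and $\nabla^{T\mM,\beta,\varepsilon}_{\tau e_j}\rho$ blows up like $1/s$ as $s\to 0^+$ --- this is just the $1/r$ behaviour of the shape operator of small geodesic spheres in the fibre $\mM_x$. Hence the $BB$-block of your $\Theta$ carries a genuine $1/s$ singularity and a naive Gr\"onwall on $(A,B)$ does not apply. The repair is not difficult: either replace $\rho$ by the smooth Euler field $s\rho$ (the paper's ``$Z$'' in (\ref{12.25})), or observe that the singular part of $[\rho,\tau e_j]$ is $-\tfrac{1}{s}(\tau e_j-\langle\tau e_j,\rho\rangle\rho)$, so that, using $\,^Q\nabla_\rho\tau\sigma=0$, its contribution to $\tfrac{d}{ds}\sum_j|B_j|^2$ is $-\tfrac{2}{s}\sum_j|B_j|^2\le 0$ and a scalar Gr\"onwall on $\Phi=\sum_i|A_i|^2+\sum_j|B_j|^2$ still closes.

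For comparison, the paper avoids ODEs altogether via a one-line consequence of metric compatibility: for $\sigma,\sigma'$ on $s(M)$,
\[
\bigl\langle\,^Q\nabla_{\tau X}(\tau\sigma),\tau\sigma'\bigr\rangle
=\tau X\langle\tau\sigma,\tau\sigma'\rangle-\bigl\langle\tau\sigma,\,^Q\nabla_{\tau X}(\tau\sigma')\bigr\rangle
=\tau X\langle\sigma,\sigma'\rangle-\bigl\langle\tau\sigma,\,^Q\nabla_{\tau X}(\tau\sigma')\bigr\rangle,
\]
the second equality because parallel transport is isometric, so $\langle\tau\sigma,\tau\sigma'\rangle$ is the fibre-constant pullback of $\langle\sigma,\sigma'\rangle$ from $s(M)$. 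Running $\sigma'$ over an orthonormal basis, the last term is $O_{\beta,\varepsilon}(|\sigma|)$, while $\tau X$ applied to a fibre-constant function is governed by the base derivatives $f_k\langle\sigma,\sigma'\rangle|_x$, which (by the same identity at $Z=0$) are bounded by $|\,^Q\nabla_{f_k}(\tau\sigma)|_x+C|\sigma|_x$. This yields (\ref{12.3}) directly, with no ODE and no singularity to manage.
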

\begin{proof} For any $X\in\Gamma (T\mM)|_{s(M)}$ and $\sigma,\ \sigma' \in\Gamma (
(S(\mF\oplus\mF_1^\perp) )|_{s(M)} )$, one verifies that,
\begin{multline}\label{12.3p}
 \left\langle\, ^Q\nabla ^{\mF,
 \beta,\varepsilon}_{\tau X}(\tau
\sigma),\tau\sigma'\right\rangle_{\beta,\varepsilon}=\tau
X\left\langle
\tau\sigma,\tau\sigma'\right\rangle_{\beta,\varepsilon}-\left\langle\tau\sigma,\,^Q\nabla
^{\mF,  \beta,\varepsilon}_{\tau X}(\tau
\sigma')\right\rangle_{\beta,\varepsilon}\\
=\tau X\left\langle
 \sigma, \sigma'\right\rangle_{\beta,\varepsilon}-\left\langle\tau\sigma,\,^Q\nabla
^{\mF,  \beta,\varepsilon}_{\tau X}(\tau
\sigma')\right\rangle_{\beta,\varepsilon}.
 \end{multline}

 From (\ref{12.3p}) and let $\sigma'$ run through the orthonormal
 basis of $(S(\mF\oplus\mF_1^\perp) )|_{s(M)}$, one obtains (\ref{12.3}) easily.
\end{proof}

 We now start to estimate the  terms $I_k$, $1\leq k\leq 6$. 

For any $1\leq i\leq q+q_1$, we denote by $\widetilde{\tau}f_i$ the unit vector field (with respect to $g^{T\mM}_{\beta,\varepsilon}$) corresponding to $\tau f_i$, that is,
\begin{align}\label{12.3f}
\widetilde{\tau}f_i=\frac{\tau f_i}{|\tau f_i|_{\beta,\varepsilon}}.
\end{align}
Then, one has $\widetilde{\tau}f_i=\beta^{-1}\tau f_i$ if $1\leq i\leq q$, while $\widetilde{\tau}f_i=\varepsilon\tau f_i$ if $q+1\leq i\leq q+q_1$.

 Let $1\leq i,\, j\leq q+q_1$ be such that $i\neq j$. By (\ref{3.12}) one deduces
 that
\begin{multline}\label{3.13}
 \left\langle
\left(1-p_{T,\beta,\varepsilon}\right)c_{\beta,\varepsilon}\left(\widetilde{\tau} f_i\right) {\nabla}_{\widetilde{\tau}
f_i}^{\mF, \beta,\varepsilon}J_{T,\beta,\varepsilon} \sigma,
 \left(1-p_{T,\beta,\varepsilon}\right)c_{\beta,\varepsilon}\left(\widetilde{\tau} f_j\right)\ {\nabla}_{\widetilde{\tau} f_j}^{\mF,
\beta,\varepsilon}J_{T,\beta,\varepsilon} \sigma\right\rangle\\
=\left\langle c_{\beta,\varepsilon}\left(\widetilde{\tau} f_i\right)\left(1-p_{T,\beta,\varepsilon}\right)
\widetilde{\tau} f_i(f_{T }) \tau\sigma, c_{\beta,\varepsilon}\left(\widetilde{\tau}
f_j\right)\left(1-p_{T,\beta,\varepsilon}\right) \widetilde{\tau}
f_j(f_{T })\tau \sigma\right\rangle
\\
+\left\langle c_{\beta,\varepsilon}\left(\widetilde{\tau}
f_i\right)\left(1-p_{T,\beta,\varepsilon}\right) \widetilde{\tau}
f_i(f_{T })\tau
\sigma,\left(1-p_{T,\beta,\varepsilon}\right)c_{\beta,\varepsilon}\left(\widetilde{\tau}
f_j\right)f_{T } {\nabla} ^{\mF,
\beta,\varepsilon}_{\widetilde{\tau}
f_j}(\tau \sigma)\right\rangle
\\
+\left\langle\left(1-p_{T,\beta,\varepsilon}\right)c_{\beta,\varepsilon}\left(\widetilde{\tau}
f_i\right)f_{T }{\nabla} ^{\mF,
\beta,\varepsilon}_{\widetilde{\tau}
f_i}(\tau \sigma),c_{\beta,\varepsilon}\left(\widetilde{\tau}
f_j\right)\left(1-p_{T,\beta,\varepsilon}\right) \widetilde{\tau}
f_j(f_{T })\tau \sigma\right\rangle\\
+\left\langle\left(1-p_{T,\beta,\varepsilon}\right)c_{\beta,\varepsilon}\left(\widetilde{\tau}
f_i\right)f_{T } {\nabla} ^{\mF,
\beta,\varepsilon}_{\widetilde{\tau}
f_i}(\tau \sigma), \left(1-p_{T,\beta,\varepsilon}\right)c_{\beta,\varepsilon}\left(\widetilde{\tau}
f_j\right)f_{T }\ {\nabla} ^{\mF,
\beta,\varepsilon}_{\widetilde{\tau}
f_j} (\tau\sigma)\right\rangle .
\end{multline}

By
(\ref{3.10}) and (\ref{11.1}), one has for any $1\leq i\leq q+q_1$,
\begin{align}\label{113}
\left(1-p_{T,\beta,\varepsilon}\right) \tau
f_i\left(f_{T }\right)\tau \sigma = \left(\tau
f_i\left(f_{T }\right)-
f_{T }\int_{\mM_x}f_{T } \tau
f_i\left(f_{T }\right) k\,dv_{\mM_x}\right)\tau\sigma.
 \end{align}

 For any $1\leq i\leq q+q_1$, set
\begin{align}\label{116}
\rho_{T,\beta,\varepsilon,i}  = \tau f_i\left(f_{T }\right) -
f_{T }\int_{\mM_x}f_{T } \tau
f_i\left(f_{T }\right)k \,dv_{\mM_x} .
 \end{align}
 By (\ref{3.9}), one has
\begin{align}\label{119}
\tau f_i\left(f_{T }\right) (x,  Z) =\left(
-\frac{ \tau f_i\left(k\right)\gamma}{2k^{3/2}\sqrt{\alpha _{T } }}  +
\frac{\tau f_i(\gamma)}{k^{1/2}\sqrt{\alpha _{T } } }
  -\frac{ T\tau
f_i\left(|Z|^2\right)\gamma
}{2k^{1/2}\sqrt{\alpha _{T } } }\right)
\exp\left(-\frac{T|Z|^2}{2 }\right) .
 \end{align}

 Let $Z=\sum_{i=1}^{q_2}z_ie_i\in\mF_2^\perp|_{s(M)}$.
 Let $a_{ik}^j\in C^\infty(s(M))$ be defined by
\begin{align}\label{120}
\tau f_i \left(z_j\right)=\left.\tau f_i
\left(z_j\right)\right|_{s(M)}+\sum_{k=1}^{q_2}a_{ij}^kz_k+O\left(|Z|^2\right).
 \end{align}
By (\ref{3.9}), (\ref{116})-(\ref{120}) and Lemma \ref{t11.3},
when $T>0$ is large enough, if $1\leq i\leq q$,
\begin{multline}\label{121}
\rho_{T,\beta,\varepsilon,i}(x,  Z) =
  -\frac{ T\tau
f_i\left(|Z|^2\right)  }{2  } f_{T }(x,
Z)+
\frac{\tau f_i(\gamma)}{k^{1/2}\sqrt{\alpha _{T } } }(1-\gamma)\exp\left(-\frac{T|Z|^2}{2 }\right)
\\
+\frac{1}{2}\left(\sum_{j=1}^{q_2} a_{ij}^j+ O(
|Z|)+O\left(|Z|^2\right)+O \left(\frac{1}{\sqrt{T}}\right)\right)
f_{T }(x,  Z) ,
 \end{multline}
 while for $q+1\leq i\leq q+q_1$, one has
\begin{multline}\label{121e}
\rho_{T,\beta,\varepsilon,i}(x,  Z) =
  -\frac{ T\tau
f_i\left(|Z|^2\right)  }{2  } f_{T }(x,
Z)+
\frac{\tau f_i(\gamma)}{k^{1/2}\sqrt{\alpha _{T } } }(1-\gamma)\exp\left(-\frac{T|Z|^2}{2 }\right)
\\
+\frac{1}{2}\left(\sum_{j=1}^{q_2} a_{ij}^j+ O\left(
\frac{|Z|}{\beta^2}\right)+O\left(|Z|^2\right)+O
\left(\frac{1}{\sqrt{T}}\right)\right)
f_{T }(x,  Z) .
 \end{multline}

We now start to estimate (\ref{3.13}).

For the first term in the right hand side of (\ref{3.13}), by
(\ref{113}) and (\ref{116}), for $i\neq j$,
\begin{multline}\label{114}
{\rm Re}\left(\left\langle c_{\beta,\varepsilon}\left(\widetilde{\tau} f_i\right)\left(1-p_{T,\beta,\varepsilon}\right)
\tau f_i\left(f_{T }\right)\tau \sigma,
c_{\beta,\varepsilon}\left(\widetilde{\tau} f_j\right)\left(1-p_{T,\beta,\varepsilon}\right) \tau f_j\left(f_{T }\right) \tau\sigma\right\rangle\right)
\\
 =
{\rm Re}\left(\left\langle c_{\beta,\varepsilon}\left(\widetilde{\tau} f_i\right)c_{\beta,\varepsilon}\left(\widetilde{\tau} f_j\right) \rho_{T,\beta,\varepsilon,i}\rho_{T,\beta,\varepsilon,j}\tau \sigma,
\tau \sigma\right\rangle\right)=0,
 \end{multline}
 as $c_{\beta,\varepsilon}( \widetilde{\tau} f_i) c_{\beta,\varepsilon} (\widetilde{\tau} f_j ) $ is skew-adjoint.

For   the second and the third terms in the right hand side of (\ref{3.13}),  by (\ref{3.12}),   one finds that for $i\neq j$,
\begin{multline}\label{117}
 \left\langle c_{\beta,\varepsilon}\left(\widetilde{\tau}
f_i\right)\left(1-p_{T,\beta,\varepsilon}\right) \widetilde{\tau}
f_i\left(f_{T }\right)
\tau\sigma,\left(1-p_{T,\beta,\varepsilon}\right)c_{\beta,\varepsilon}\left(\widetilde{\tau}
f_j\right)f_{T } {\nabla} ^{\mF, \beta,\varepsilon}_{\widetilde{\tau}
f_j}(\tau \sigma)\right\rangle   \\
=  \left\langle c_{\beta,\varepsilon}\left(\widetilde{\tau} f_i\right) c_{\beta,\varepsilon}\left(\widetilde{\tau}
f_j\right) \widetilde{\tau}
f_i\left(f_{T }\right) \tau\sigma,\left(1-p_{T,\beta,\varepsilon}\right)
 f_T {\nabla} ^{\mF, \beta,\varepsilon}_{\widetilde{\tau} f_j}(\tau
\sigma)\right\rangle \\
= \left\langle c_{\beta,\varepsilon}\left(\widetilde{\tau} f_i\right) c_{\beta,\varepsilon}\left(\widetilde{\tau}
f_j\right) \widetilde{\tau}
f_i\left(f_{T }\right) f_T\tau\sigma, \,^Q {\nabla} ^{\mF, \beta,\varepsilon}_{\widetilde{\tau} f_j}(\tau
\sigma)-\tau\left(\left.\,^Q {\nabla} ^{\mF, \beta,\varepsilon}_{\widetilde{\tau} f_j}(\tau
\sigma)\right|_{s(M)}\right)
\right\rangle\\
- \left\langle c_{\beta,\varepsilon}\left(\widetilde{\tau} f_i\right) c_{\beta,\varepsilon}\left(\widetilde{\tau}
f_j\right) f_T  p_{T,\beta,\varepsilon}\left( \widetilde{\tau}
f_i\left(f_{T }\right) \tau\sigma\right), \,^Q {\nabla} ^{\mF, \beta,\varepsilon}_{\widetilde{\tau} f_j}(\tau
\sigma)-\tau\left(\left.\,^Q {\nabla} ^{\mF, \beta,\varepsilon}_{\widetilde{\tau} f_j}(\tau
\sigma)\right|_{s(M)}\right)
\right\rangle.
  \end{multline}

Since this term is more delicate to deal with than the other
terms, we postpone it's analysis  to the next subsection.

 For the fourth  term in the right hand side of (\ref{3.13}), one
 first sees easily via   (\ref{11.2}) and (\ref{12.3})    that when $T>0$ is large enough, for any $x\in
 s(M)$,
\begin{multline}\label{126}
\int_{\mM_x}\left\langle\left(1-p_{T,\beta,\varepsilon}\right)c_{\beta,\varepsilon}\left(\widetilde{\tau}
f_i\right)f_{T } {\nabla} ^{\mF, \beta,\varepsilon}_{\widetilde{\tau}  f_i}(\tau \sigma),  \left(1-p_{T,\beta,\varepsilon}\right)c_{\beta,\varepsilon}\left(\widetilde{\tau}
f_j\right)f_{T } {\nabla} ^{\mF, \beta,\varepsilon}_{\widetilde{\tau}
f_j}(\tau \sigma)\right\rangle k\, d{v}_{\mM_x}\\
= \left\langle c_{\beta,\varepsilon}\left(\widetilde{\tau} f_i\right) (1-Q) {\nabla} ^{\mF,
\beta,\varepsilon}_{\widetilde{\tau}
f_i}(\tau \sigma),  c_{\beta,\varepsilon}\left(\widetilde{\tau} f_j\right) (1-Q) {\nabla }^{\mF,
\beta,\varepsilon}_{\widetilde{\tau}
f_j} (\tau\sigma)\right\rangle_x
\\
   +O\left(\frac{1}{\sqrt{T}}\right)
   |\sigma|^2_x   +O\left(\frac{1}{\sqrt{T}}\right)
 \sum_{j=1}^{q+q_1}  \left|
^Q {\nabla} ^{\mF, \beta ,\varepsilon}_{ {f_j} }(\tau\sigma)\right|^2_x  .
\end{multline}

 By definition (cf. (\ref{1.42}) and (\ref{1.42a})), one has on $s(M)$ that
\begin{multline}\label{127}
  (1-Q)\left( {\nabla }^{\mF,
\beta,\varepsilon}_{
f_i}\right)Q
 = \frac{\beta}{2} \sum_{  k=1}^{q }\sum_{j=1}^{ q_2}\left\langle
 \nabla_{f_i}^{T\mM,\beta,\varepsilon}e_j,f_k\right\rangle c_{\beta,\varepsilon}\left(e_j\right)c_{\beta,\varepsilon}\left(\beta^{-1}f_k\right)
 \\
 + \frac{\varepsilon^{-1}}{2} \sum_{  k=q+1}^{q+q_1 }\sum_{j=1}^{ q_2}
 \left\langle \nabla_{f_i}^{T\mM,\beta,\varepsilon}  e_j,  f_k\right\rangle c_{\beta,\varepsilon}\left(e_j\right)c_{\beta,\varepsilon}\left(\varepsilon f_k\right)
  .
 \end{multline}

 By (\ref{3.12c}), one has for $1\leq i,\ k\leq q$ that
\begin{align}\label{127a}
    \left\langle
 \nabla_{f_i}^{T\mM,\beta,\varepsilon}e_j,f_k\right\rangle=0.
 \end{align}

 Also, by (\ref{1.5}) and (\ref{1.7}), one finds that when $1\leq i\leq
 q$,
 $q+1\leq k\leq q+q_1$,
\begin{align}\label{127b}
  \varepsilon^{-1} \left\langle
 \nabla_{f_i}^{T\mM,\beta,\varepsilon}e_j,f_k\right\rangle= O\left(\varepsilon \right).
 \end{align}

 From (\ref{12.3f}) and (\ref{126})-(\ref{127b}), one gets that if $1\leq i,\, j\leq q$ with $i\neq j$,
 then
\begin{multline}\label{127c}
\int_{\mM_x}
\left\langle\left(1-p_{T,\beta,\varepsilon}\right)c_{\beta,\varepsilon}\left(\widetilde{\tau}
f_i\right)f_{T } {\nabla} ^{\mF,
\beta,\varepsilon}_{\widetilde{\tau}
f_i}(\tau \sigma),
\left(1-p_{T,\beta,\varepsilon}\right)c_{\beta,\varepsilon}\left(\widetilde{\tau}
f_j\right)f_{T } {\nabla} ^{\mF,
\beta,\varepsilon}_{\widetilde{\tau}
f_j}(\tau \sigma)\right\rangle k\, dv_{\mM_x}
\\
=\left( O\left(  {\frac{\varepsilon^2}{\beta^2}} \right) +O\left(\frac{1}{\sqrt{T}}\right)\right)
   |\sigma|^2_x   +O\left(\frac{1}{\sqrt{T}}\right)
 \sum_{j=1}^{q+q_1}  \left|
^Q {\nabla} ^{\mF, \beta,\varepsilon}_{ {f_j} }(\tau\sigma)\right|^2_x .
 \end{multline}

 If $q+1\leq i \leq q+q_1$, $1\leq k\leq q$,   then one has by (\ref{f5})  that 
 \begin{align}\label{127d}
  \beta \left\langle
 \nabla_{f_i}^{T\mM,\beta,\varepsilon}e_j,f_k\right\rangle=O\left(\frac{1}{\beta}\right),
 \end{align}
while if $q+1\leq i,\, k\leq q+q_1$, one has
 \begin{align}\label{127e}
  \varepsilon^{-1} \left\langle
 \nabla_{f_i}^{T\mM,\beta,\varepsilon}e_j,f_k\right\rangle= O\left(\varepsilon^{-1}\right).
 \end{align}
 Combining with (\ref{126})-(\ref{127b}), one gets that if $q+1\leq i\leq
 q+q_1$, $1\leq j\leq q$,
 then
\begin{multline}\label{127f}
\int_{\mM_x}\left\langle\left(1-p_{T,\beta,\varepsilon}\right)c_{\beta,\varepsilon}\left(\widetilde{\tau}
f_i\right)f_{T } {\nabla} ^{\mF, \beta,\varepsilon}_{\widetilde{\tau}  f_i}(\tau \sigma), \left(1-p_{T,\beta,\varepsilon}\right)c_{\beta,\varepsilon}\left(\widetilde{\tau}
f_j\right)f_{T } {\nabla} ^{\mF, \beta,\varepsilon}_{\widetilde{\tau}
f_j}(\tau \sigma)\right\rangle k\, d{v}_{\mM_x}\\
=\left(
O\left(\frac{\varepsilon(\beta+\varepsilon)}{\beta^2}\right)+O\left(\frac{1}{\sqrt{T}}\right)\right)
   |\sigma|^2_x   +O\left(\frac{1}{\sqrt{T}}\right)
 \sum_{j=1}^{q+q_1}  \left|
^Q {\nabla} ^{\mF, \beta,\varepsilon}_{ {f_j} }(\tau\sigma)\right|^2_x .
 \end{multline}
Also, when $q+1\leq i,\, j\leq
 q+q_1$  with $i\neq j$, one gets
\begin{multline}\label{127g}
\int_{\mM_x}\left\langle\left(1-p_{T,\beta,\varepsilon}\right)c_{\beta,\varepsilon}\left(\widetilde{\tau}
f_i\right)f_{T } {\nabla }^{\mF, \beta,\varepsilon}_{\widetilde{\tau}  f_i} (\tau\sigma),  \left(1-p_{T,\beta,\varepsilon}\right)c_{\beta,\varepsilon}\left(\widetilde{\tau}
f_j\right)f_{T } {\nabla} ^{\mF, \beta,\varepsilon}_{\widetilde{\tau}
f_j} (\tau\sigma)\right\rangle k\, d{v}_{\mM_x}\\
= \left(O\left( \frac{(\beta+\varepsilon)^2}{\beta^2}\right) +O\left(\frac{1}{\sqrt{T}}\right)\right)
   |\sigma|^2_x   +O\left(\frac{1}{\sqrt{T}}\right)
 \sum_{j=1}^{q+q_1}  \left|
^Q {\nabla} ^{\mF, \beta,\varepsilon}_{ {f_j} }(\tau\sigma)\right|^2_x .
 \end{multline}

 Now we consider the terms $I_5$ and $I_6$.   By (\ref{i5}) and (\ref{i6}), we need to consider the following term for $1\leq j\leq q+q_1$ and $1\leq k\leq q_2$:
\begin{multline}\label{128}
 \left\langle
\left(1-p_{T,\beta,\varepsilon}\right)c_{\beta,\varepsilon}\left(\widetilde{\tau}  f_i\right) {\nabla}_{\widetilde{\tau}
f_i}^{\mF, \beta,\varepsilon}J_{T,\beta,\varepsilon}  \sigma,
\left(1-p_{T,\beta,\varepsilon}\right)c_{\beta,\varepsilon}\left(\tau e_k\right) {\nabla}_{\tau
e_k}^{\mF, \beta,\varepsilon}J_{T,\beta,\varepsilon} \sigma\right\rangle\\
=\left\langle c_{\beta,\varepsilon}\left(\widetilde{\tau}  f_i\right)\left(1-p_{T,\beta,\varepsilon}\right)
\widetilde{\tau}  f_i\left(f_{T }\right)\tau \sigma,
 c_{\beta,\varepsilon}\left(\tau e_k\right) \tau
e_k\left(f_{T }\right)\tau \sigma\right\rangle
\\
+\left\langle\left(1-p_{T,\beta,\varepsilon}\right)c_{\beta,\varepsilon}\left(\widetilde{\tau}
f_i\right)f_{T }\nabla ^{\mF, \beta,\varepsilon}_{\widetilde{\tau}  f_i}{\tau \sigma},
 c_{\beta,\varepsilon}\left(\tau e_k\right)f_{T } {\nabla}
^{\mF, \beta,\varepsilon}_{\tau e_k}(\tau
\sigma)\right\rangle
\\
+\left\langle\left(1-p_{T,\beta,\varepsilon}\right)c_{\beta,\varepsilon}\left(\widetilde{\tau}
f_i\right)f_{T } {\nabla} ^{\mF, \beta,\varepsilon}_{\widetilde{\tau}  f_i}(\tau \sigma),
c_{\beta,\varepsilon}\left(\tau e_k\right) \tau e_k\left(f_{T }\right)
\tau\sigma\right\rangle
\\
+\left\langle c_{\beta,\varepsilon}\left(\widetilde{\tau}  f_i\right)\left(1-p_{T,\beta,\varepsilon}\right)
\widetilde{\tau} f_i\left(f_{T }\right)\tau \sigma, c_{\beta,\varepsilon}\left(\tau
e_k\right)f_{T } {\nabla} ^{\mF, \beta,\varepsilon}_{\tau e_k}(\tau
\sigma)\right\rangle.
\end{multline}

First, by (\ref{113}) and the obvious parity consideration,\footnote{By the``parity consideration" here we mean that if a term $A$  involves an odd   number of Clifford actions $c(U)$ with $U\in \mF_2^\perp$, then one has the obvious fact that $QAQ=0$, etc. The ``degree consideration" appears in the later text is based on the same reasoning.} we
have
\begin{align}\label{129}
 \left\langle
  c_{\beta,\varepsilon}\left(\widetilde{\tau}  f_i\right)\left(1-p_{T,\beta,\varepsilon}\right)
\widetilde{\tau}  f_i\left(f_{T }\right)\tau \sigma,
 c_{\beta,\varepsilon}\left(\tau e_k\right) \tau
e_k\left(f_{T }\right)\tau \sigma\right\rangle
 =0.
\end{align}

\begin{lemma}\label{t11.5} For any $U\in\Gamma(\mF_2^\perp|_{s(M)})$, the following identity holds on
$s(M)$,
\begin{align}\label{12.24}
\left.\left(^Q {\nabla}^{\mF, \beta,\varepsilon}_U(\tau\sigma)\right)\right|_{s(M)}=0.
\end{align}
\end{lemma}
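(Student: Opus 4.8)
The plan is to read the identity off directly from the way $\tau\sigma$ was constructed, namely by radial parallel transport inside the fibers of $\pi:\mM\rightarrow M$; no estimates are involved. First I would fix a point $x\in s(M)$ and a vector $U=U_x\in\mF_2^\perp|_x=T_x\mM_x$. Recall that, by definition, the restriction of $\tau\sigma$ to the fiber $\mM_x$ is the $^Q\widetilde{\nabla}^{\mF,\beta,\varepsilon}$-parallel transport of $\sigma(x)$ along the geodesics of $\mM_x$ emanating from $x$. In particular, along the curve $\gamma$ lying in $\mM_x\cap\psi_c(U_\alpha(\mF_2^\perp))$ which traces out $t\mapsto\exp^{\mM_x}(tU)$ with $\gamma(0)=x$, the section $t\mapsto(\tau\sigma)(\gamma(t))$ is precisely this parallel transport, hence $^Q\widetilde{\nabla}^{\mF,\beta,\varepsilon}_{\dot\gamma(t)}(\tau\sigma)=0$ for every $t$ in the domain of $\gamma$.

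The next (and essentially last) step is to evaluate this relation at $t=0$. Since $\dot\gamma(0)$ is a positive multiple of $U$ — it equals $U$ after the harmless reparametrization that absorbs the scaling constant $c$ built into $\psi_c$ — linearity of the connection gives $^Q\widetilde{\nabla}^{\mF,\beta,\varepsilon}_{U}(\tau\sigma)|_x=0$. As $x\in s(M)$ and $U\in\mF_2^\perp|_x$ were arbitrary, and as the $U$ in the statement is a section of $\mF_2^\perp$ over $s(M)$, this is exactly $\left.\bigl(^Q\widetilde{\nabla}^{\mF,\beta,\varepsilon}_U(\tau\sigma)\bigr)\right|_{s(M)}=0$, i.e. (\ref{12.24}).

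The one point that calls for a little care — and the closest thing here to an obstacle — is the bookkeeping with parametrizations: one has to make sure that ``parallel along the curves $(x,tZ)$, $0\le t\le c$'', which is how $\tau\sigma$ was defined through $\psi_c$, really does force the vanishing of the covariant derivative in the genuine vertical direction $Z$ at $x$, i.e. that the curve $t\mapsto(x,tZ)$ has initial velocity a positive multiple of $Z$. Once that is granted, the argument is just the familiar fact that a section that is synchronous (radially parallel) along the geodesics of a submanifold issuing from one of its points has vanishing covariant derivative at that point in every direction tangent to the submanifold, applied fiberwise to each $\mM_x$; in particular neither the almost isometric structure nor Condition (C) enters. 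This is the same localization mechanism exploited in \cite[Chapters 8 and 9]{BL91}.
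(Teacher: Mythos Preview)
Your argument is correct and is essentially the same as the paper's: both read the identity off from the radial parallelism of $\tau\sigma$ inside each fiber $\mM_x$. The paper phrases it by writing the Euler field $Z=\sum_k z_k\tau e_k$, using $^Q\nabla^{\mF,\phi_1(\mF_1^\perp),\beta,\varepsilon}_Z(\tau\sigma)=0$ and then differentiating in $z_i$ at $Z=0$, whereas you take the specific geodesic in direction $U$ and evaluate at $t=0$; these are two renderings of the same elementary fact.
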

\begin{proof} By construction, one has
\begin{align}\label{12.25}
 ^Q\nabla^{\mF,  \beta,\varepsilon}_Z(\tau\sigma)  =0.
\end{align}

 Taking the derivative with respect to $z_i$, one gets
\begin{align}\label{12.26}
\left.\left(^Q\nabla^{\mF, \beta,\varepsilon}_{e_i}(\tau\sigma)\right)\right|_{s(M)}=0.
\end{align}

Formula (\ref{12.24}) follows from (\ref{12.26}).
\end{proof}

For the second term in the right hand side of (\ref{128}),   one obtains by
(\ref{11.2}), (\ref{12.3}) and  Lemma \ref{t11.5} that  for any $x\in s(M)$, one has
\begin{multline}\label{130}
\int_{\mM_x} \left\langle\left(1-p_{T,\beta,\varepsilon}\right)c_{\beta,\varepsilon}\left(\widetilde{\tau}
f_i\right)f_{T } {\nabla }^{\mF, \beta,\varepsilon}_{\widetilde{\tau}  f_i} (\tau\sigma),
 c_{\beta,\varepsilon}\left(\tau e_k\right)f_{T } {\nabla}
^{\mF, \beta,\varepsilon}_{\tau e_k}
(\tau\sigma)\right\rangle_{(x,Z)} k\,dv_{\mM_x} \\
=
 \left\langle c_{\beta,\varepsilon}\left( \widetilde{\tau} f_i\right)(1-Q)  {\nabla}
^{\mF, \beta,\varepsilon}_{\widetilde{\tau}
f_i} (\tau\sigma),
 c_{\beta,\varepsilon}\left(  e_k\right) (1-Q)  {\nabla}
^{\mF, \beta,\varepsilon}_{
e_k}(\tau  \sigma )\right\rangle_x\\
+O\left( \frac{1}{\sqrt{T}}\right)
   |\sigma|^2_x   +O\left(\frac{1}{\sqrt{T}}\right)
 \sum_{j=1}^{q+q_1}  \left|
^Q {\nabla} ^{\mF, \beta,\varepsilon}_{ {f_j}
}(\tau\sigma)\right|^2_x.\end{multline}

By (\ref{1.6}) and (\ref{2.2}), one knows that for any $U,\,
V\in\Gamma(\mF_2^\perp)$ and $X\in\Gamma(\mF)$, one has
\begin{align}\label{12.27a}
 \left\langle \nabla^{T\mM,\beta,\varepsilon}_UV,X\right\rangle=0.
\end{align}

Similar to (\ref{127}), one has by (\ref{12.27a}) that, on $s(M)$,
\begin{multline}\label{12.30}
  (1-Q)\left( {\nabla} ^{\mF,
\beta,\varepsilon}_{
e_k}\right)Q
 = \frac{\beta}{2} \sum_{  s=1}^{q }\sum_{j=1}^{ q_2}\left\langle
 \nabla_{e_k}^{T\mM,\beta,\varepsilon}e_j,f_s\right\rangle c_{\beta,\varepsilon}\left(e_j\right)c_{\beta,\varepsilon}\left(\beta^{-1}f_s\right)
 \\
 + \frac{\varepsilon^{-1}}{2} \sum_{  s=q+1}^{q+q_1 }\sum_{j=1}^{ q_2}
 \left\langle \nabla_{e_k}^{T\mM,\beta,\varepsilon}  e_j,  f_s\right\rangle c_{\beta,\varepsilon}\left(e_j\right)c_{\beta,\varepsilon}\left(\varepsilon f_s\right)
    \\
 =\frac{\varepsilon^{-1}}{2} \sum_{  s=q+1}^{q+q_1 }\sum_{j=1}^{ q_2}
 \left\langle \nabla_{e_k}^{T\mM,\beta,\varepsilon}  e_j,  f_s\right\rangle c_{\beta,\varepsilon}\left(e_j\right)c_{\beta,\varepsilon}\left(\varepsilon f_s\right)
   .
 \end{multline}

From (\ref{127}),  (\ref{130}), (\ref{12.30}) and  the easy parity
consideration,  one gets that for $ 1\leq i\leq q+q_1$, $1\leq
k\leq q_2$,
\begin{multline}\label{130t}
\int_{\mM_x}
\left\langle\left(1-p_{T,\beta,\varepsilon}\right)c_{\beta,\varepsilon}\left(\widetilde{\tau}
f_i\right)f_{T } {\nabla} ^{\mF,
\beta,\varepsilon}_{\widetilde{\tau}
f_i} (\tau\sigma),
 c_{\beta,\varepsilon}\left(\tau e_k\right)f_{T } {\nabla}
^{\mF, \beta,\varepsilon}_{\tau e_k}
(\tau\sigma)\right\rangle_{(x,Z)} k\,dv_{\mM_x} \\
=O\left(   \frac{1}{\sqrt{T}}\right)
   |\sigma|^2_x   +O\left(\frac{1}{\sqrt{T}}\right)
 \sum_{j=1}^{q+q_1}  \left|
^Q\nabla ^{\mF,  \beta,\varepsilon}_{ {f_j} }(\tau\sigma)\right|^2_x.
\end{multline}

For the third term in the right hand side of (\ref{128}), if $1\leq i\leq q+q_1$,  one has by  an easy degree consideration,
\begin{multline}\label{131}
  \left\langle\left(1-p_{T,\beta,\varepsilon}\right)c_{\beta,\varepsilon}\left(\widetilde{\tau}  f_i \right)f_{T }{\nabla}
^{\mF, \beta,\varepsilon}_{\widetilde{\tau}  f_i }(\tau \sigma),
c_{\beta,\varepsilon}\left(\tau e_k\right) \tau e_k\left(f_{T }\right)
\tau\sigma\right\rangle\\
 =\left\langle c_{\beta,\varepsilon}\left(\widetilde{\tau}  f_i \right)f_{T } {\nabla}
^{\mF, \beta,\varepsilon}_{\widetilde{\tau}  f_i }{(\tau \sigma)},
c_{\beta,\varepsilon}\left(\tau e_k\right) \tau e_k\left(f_{T }\right)
\tau\sigma\right\rangle
\\
=\left\langle c_{\beta,\varepsilon}\left(\widetilde{\tau}  f_i \right)f_{T }(1-Q) {\nabla}
^{\mF, \beta,\varepsilon}_{\widetilde{\tau}  f_i }{(\tau \sigma)},
c_{\beta,\varepsilon}\left(\tau e_k\right) \tau e_k\left(f_{T }\right)
\tau\sigma\right\rangle .
\end{multline}

As in (\ref{127}), one has
\begin{multline}\label{11.7}
  (1-Q)\left( {\nabla} ^{\mF,
\beta,\varepsilon}_{\tau
f_i}\right)Q
 = \frac{1}{2\beta} \sum_{  k=1}^{q }\sum_{j=1}^{ q_2}\left\langle
 \nabla_{\tau f_i}^{T\mM,\beta,\varepsilon}(\tau e_j),\tau f_k\right\rangle_{\beta,\varepsilon}
 c_{\beta,\varepsilon}\left(\tau e_j\right)c_{\beta,\varepsilon}\left(\beta^{-1}\tau f_k\right)
 \\
 + \frac{\varepsilon }{2} \sum_{  k=q+1}^{q+q_1 }\sum_{j=1}^{ q_2}
 \left\langle \nabla_{\tau f_i}^{T\mM,\beta,\varepsilon} (\tau e_j), \tau f_k\right\rangle_{\beta,\varepsilon}
  c_{\beta,\varepsilon}\left(\tau e_j\right)c_{\beta,\varepsilon}\left(\varepsilon\tau
 f_k\right),
 \end{multline}
where the subscripts ``$\beta$'', ``$\varepsilon$'' are to emphasize that the pointwise
inner product is the one with respect to $g^{T\mM}_{\beta,\varepsilon}$.

From (\ref{11.7}), one finds
\begin{multline}\label{11.8}
\left\langle c_{\beta,\varepsilon}\left(\tau f_i\right)f_{T }(1-Q) {\nabla}
^{\mF, \beta,\varepsilon}_{\tau f_i}{(\tau \sigma)},
c_{\beta,\varepsilon}\left(\tau e_k\right) \tau e_k\left(f_{T }\right)
\tau\sigma\right\rangle
\\
=\frac{1}{2\beta} \sum_{  m=1}^{q }\sum_{j=1}^{ q_2}\left(
\int_{s(M)}\left\langle c_{\beta,\varepsilon}\left(  f_i\right)c_{\beta,\varepsilon}\left(
e_j\right)c_{\beta,\varepsilon}\left(\beta^{-1} f_m\right)\sigma,c_{\beta,\varepsilon}\left(  e_k\right)
\sigma\right\rangle dv_{s(M)}\right.
 \end{multline}
$$
\left.\cdot\int_{\mM_x}\left\langle { \nabla}_{\tau
f_i}^{T\mM,\beta,\varepsilon}(\tau e_j),\tau
f_m\right\rangle_{\beta,\varepsilon} f_{T }\, \tau
e_k\left(f_{T }\right)k\, dv_{\mM_x}(Z)\right)
$$
$$
+\frac{\varepsilon }{2} \sum_{  m=q+1}^{q+q_1
}\sum_{j=1}^{ q_2}\left(\int_{s(M)}\left\langle c_{\beta,\varepsilon}\left(
f_i\right)c_{\beta,\varepsilon}\left( e_j\right)c_{\beta,\varepsilon}\left(\varepsilon  f_m\right)\sigma,c_{\beta,\varepsilon}\left(
e_k\right) \sigma\right\rangle dv_{s(M)}\right.
$$
$$
\left.\cdot \int_{\mM_x}\left\langle  \nabla_{\tau
f_i}^{T\mM,\beta,\varepsilon}(\tau e_j),\tau
f_m\right\rangle_{\beta,\varepsilon} f_{T }\, \tau
e_k\left(f_{T }\right)k\, dv_{\mM_x}(Z) \right)
$$
$$
=-\frac{1}{2\beta} \sum_{  m=1}^{q } \int_{s(M)}\left\langle
c_{\beta,\varepsilon}\left(  f_i\right)c_{\beta,\varepsilon} \left(\beta^{-1} f_m\right)\sigma, \sigma\right\rangle
dv_{s(M)} 
$$
$$
\cdot\int_{\mM_x}\left\langle \nabla_{\tau
f_i}^{T\mM,\beta,\varepsilon}(\tau e_k),\tau
f_m\right\rangle_{\beta,\varepsilon} f_{T }\, \tau
e_k\left(f_{T }\right)k\, dv_{\mM_x}(Z)
$$
$$
-\frac{\varepsilon }{2} \sum_{  m=q+1}^{q+q_1
} \int_{s(M)}\left\langle c_{\beta,\varepsilon}\left( f_i\right) c_{\beta,\varepsilon}\left(\varepsilon
f_m\right)\sigma, \sigma\right\rangle
dv_{s(M)}
$$
$$
\cdot\int_{\mM_x}\left\langle \nabla_{\tau
f_i}^{T\mM,\beta,\varepsilon}(\tau e_k),\tau
f_m\right\rangle_{\beta,\varepsilon} f_{T }\, \tau
e_k\left(f_{T }\right)k\, dv_{\mM_x}(Z).
$$

Clearly,  when $i\neq m$, $c_{\beta,\varepsilon}(f_i)c_{\beta,\varepsilon}(f_m)$ is skew-adjoint, thus
\begin{align}\label{11.9a}
{\rm Re}\left( \left\langle c_{\beta,\varepsilon}\left( f_i\right) c_{\beta,\varepsilon}\left(
f_m\right)\sigma, \sigma\right\rangle\right) =0.
 \end{align}

By (\ref{3.9}), one has
\begin{align}\label{11.9}
\tau e_k\left(f_{T }\right) (x,  Z) =\left(
-\frac{ \tau e_k\left(k\right)\gamma}{2k^{3/2}\sqrt{\alpha_{T }} }  +
\frac{\tau e_k(\gamma)}{k^{1/2}\sqrt{\alpha_{T }} }
  -\frac{ T\tau
e_k\left(|Z|^2\right)\gamma
}{2k^{1/2}\sqrt{\alpha_{T }} }\right)
\exp\left(-\frac{T|Z|^2}{2 }\right) .
 \end{align}

 By (\ref{2.3}), one knows that $\tau e_k$ does not depend on $\beta$ and
$\varepsilon$.

From   Lemma \ref{t11.3} and (\ref{127a}), one gets that for $1\leq i,\, m\leq q$,
$1\leq j\leq q_2$,
\begin{multline}\label{11.11c}
 \left.\left\langle \nabla_{\tau f_i}^{T\mM,\beta,\varepsilon}(\tau e_j),\tau
f_m\right\rangle_{\beta,\varepsilon}\right|_{(x,  Z)} =
   \left\langle
\nabla_{f_i'+  \sum_{k=q+1}^{q+q_1}O(\varepsilon^{2}
|Z|)f_k'}^{T\mM,\beta,\varepsilon}(\tau e_j),  f_m' +\sum_{k=q+1}^{q+q_1}O\left(\varepsilon^{2}
|Z|\right)f_k'\right\rangle_{\beta,\varepsilon} \\ +O\left(|Z|^2\right)
=  O\left(\varepsilon^2{|Z|} \right)+O\left(|Z|^2\right).
 \end{multline}

From (\ref{11.9}) and (\ref{11.11c}), one gets
\begin{align}\label{11.12}
\frac{1}{\beta} \int_{\mM_x}\left\langle \nabla_{\tau
f_i}^{T\mM,\beta,\varepsilon}(\tau e_j),\tau
f_m\right\rangle_{\beta,\varepsilon} f_{T }\, \tau
e_k\left(f_{T }\right)k\,
dv_{\mM_x}(Z)=O\left( \frac{\varepsilon^2}{\beta}\right)+O\left(\frac{1}{\sqrt{T}}\right).
\end{align}

 From  (\ref{12.3f}), (\ref{131}), (\ref{11.8}), (\ref{11.9a}) and (\ref{11.12}), one finds that when $1\leq i\leq
 q$, $1\leq k\leq q_2$,
\begin{multline}\label{132}
 \left\langle\left(1-p_{T,\beta,\varepsilon}\right)c_{\beta,\varepsilon}\left(\widetilde{\tau}
f_i\right)f_{T } {\nabla} ^{\mF,
\beta, \varepsilon}_{\widetilde{\tau}
f_i}  (\tau \sigma),  c_{\beta,\varepsilon}\left(\tau e_k\right) \tau
e_k\left(f_{T }\right)
\tau\sigma\right\rangle \\
 =\left( O\left( \frac{\varepsilon^2}{\beta^2} \right)+O\left(\frac{1}{\sqrt{T}}\right)\right)
\int_{s(M)}|\sigma|^2dv_{s(M)}.
\end{multline}

Now for $q+1\leq i,\, m\leq q+q_1$ and $1\leq j\leq q_2$, one has
\begin{multline}\label{11.18b}
 \left.\left\langle \nabla_{\tau f_i}^{T\mM,\beta,\varepsilon}(\tau e_j),\tau
f_m\right\rangle_{\beta,\varepsilon}\right|_{(x,Z)}  =
   \left\langle
\nabla_{f_i'+\sum_{j=1}^q O\left(\frac{|Z|}{\beta^2}\right)
f_j'+\sum_{k=q+1}^{q+q_1}O( |Z|)f_k'}^{T\mM,\beta,\varepsilon}(\tau e_j),\right.\\
\left.f_m'+\sum_{j=1}^q O\left(\frac{|Z|}{\beta^2}\right)
f_j'+\sum_{k=q+1}^{q+q_1}O( |Z|)f_k' \right\rangle_{\beta,\varepsilon}  +O\left(|Z|^2\right)\\
=
O\left(\frac{1}{\varepsilon^{2}}\right)+O\left(\left(\frac{1}{\beta^2}+\frac{1}{\varepsilon^2} \right)|Z|\right)+O\left(|Z|^2\right).
 \end{multline}

By using (\ref{12.3f}),  (\ref{131}), (\ref{11.8})-(\ref{11.9}) and
(\ref{11.18b}), one finds that when $q+1\leq i\leq q+q_1$, $1\leq
k\leq q_2$,
 \begin{multline}\label{132a}
\left\langle\left(1-p_{T,\beta,\varepsilon}\right)c_{\beta,\varepsilon}\left(\widetilde{\tau}
f_i\right)f_{T } {\nabla }^{\mF,
\beta,\varepsilon}_{\widetilde{\tau}
f_i}{\tau \sigma}, c_{\beta,\varepsilon}\left(\tau e_k\right) \tau
e_k\left(f_{T }\right)
\tau\sigma\right\rangle 
\\
 =
 \left(O\left(1+\frac{\varepsilon^2}{\beta^2}\right)+O\left(\frac{1}{\sqrt{T}}\right)\right)
\int_{s(M)}|\sigma|^2dv_{s(M)}.
\end{multline}

For the fourth term in the right hand side of (\ref{128}), one verifies easily by (\ref{113}) and (\ref{116})  that
\begin{multline}\label{11.19a}
\left\langle c_{\beta,\varepsilon}\left( {\tau}  f_i\right)\left(1-p_{T,\beta,\varepsilon}\right)
 {\tau}  f_i\left(f_{T }\right)\tau \sigma, c_{\beta,\varepsilon}\left(\tau
e_k\right)f_{T } {\nabla} ^{\mF, \beta,\varepsilon}_{\tau e_k}(\tau
\sigma)\right\rangle\\
= \left\langle c_{\beta,\varepsilon}\left( {\tau}
f_i\right)\left(1-p_{T,\beta,\varepsilon}\right)  {\tau}
f_i\left(f_{T }\right)\tau \sigma, c_{\beta,\varepsilon}\left(\tau
e_k\right)f_{T }(1-Q) {\nabla} ^{\mF,
\beta,\varepsilon}_{\tau
e_k}(\tau \sigma)\right\rangle \\
=\left\langle c_{\beta,\varepsilon}\left( {\tau}  f_i\right)\rho_{T,\beta,\varepsilon,i}\tau
\sigma, c_{\beta,\varepsilon}\left(\tau e_k\right)f_{T }(1-Q) {\nabla} ^{\mF,
\beta,\varepsilon}_{\tau
e_k}(\tau \sigma)\right\rangle .
\end{multline}

As in (\ref{11.7}), one has
\begin{multline}\label{134a}
(1-Q)  { \nabla} ^{\mF, \beta,\varepsilon}_{\tau e_k} (\tau\sigma )=
\frac{1}{2\beta}\sum_{j=1}^{q_2}\sum_{m=1}^q\left\langle
\nabla^{T\mM,\beta,\varepsilon}_{\tau e_k}(\tau e_j),\tau
f_m\right\rangle_{\beta,\varepsilon}
c_{\beta,\varepsilon}(\tau e_j)c_{\beta,\varepsilon}\left(\beta^{-1}\tau f_m\right)\tau\sigma\\
+\frac{\varepsilon }{2}\sum_{j=1}^{q_2}\sum_{m=q+1}^{q+q_1}\left\langle
\nabla^{T\mM,\beta,\varepsilon}_{\tau e_k}(\tau e_j),\tau
f_m\right\rangle_{\beta,\varepsilon} c_{\beta,\varepsilon}(\tau e_j)c_{\beta,\varepsilon}(\varepsilon\tau f_m)\tau\sigma .
\end{multline}

By Lemma \ref{t11.3},  (\ref{2.2}) and (\ref{12.27a}), one
verifies that for $1\leq m\leq q$, one has
\begin{multline}\label{11.20s}
 \left.\left\langle \nabla_{\tau e_i}^{T\mM,\beta,\varepsilon}(\tau e_j),\tau
f_m\right\rangle_{\beta,\varepsilon}\right|_{(x,Z)}  =
   \left\langle
\nabla^{T\mM,\beta,\varepsilon}_{\tau e_i} \tau e_j, f_m'+ \sum_{k=q+1}^{q+q_1}O\left(\varepsilon^{2} |Z|\right)f_k' \right\rangle_{\beta,\varepsilon } +O\left(|Z|^2\right)\\
=
 O\left({\varepsilon^{2}} {|Z|}\right)+O\left(|Z|^2\right),
 \end{multline}
while for $q+1\leq m\leq q+q_1$, one has,
\begin{multline}\label{11.20}
 \left.\left\langle \nabla_{\tau e_i}^{T\mM,\beta,\varepsilon}(\tau e_j),\tau
f_m\right\rangle_{\beta,\varepsilon}\right|_{(x,Z)}  =
   \left\langle
\nabla^{T\mM,\beta,\varepsilon}_{\tau e_i} \tau e_j, f_m'+\sum_{j=1}^q
O\left(\frac{|Z|}{\beta^2}\right)
f_j'+\sum_{k=q+1}^{q+q_1}O( |Z|)f_k' \right\rangle_{\beta,\varepsilon }
\\
 +O\left(|Z|^2\right)
=
O\left(1\right)+O\left( {|Z|}{ }\right)+O\left(|Z|^2\right).
 \end{multline}

From Lemma \ref{t11.3},  (\ref{121}), (\ref{121e}) and (\ref{11.19a})-(\ref{11.20}),
one gets that for $1\leq i\leq q $ and $1\leq k\leq q_2$,   and also using the parity consideration,
\begin{multline}\label{11.19d}
 \frac{1}{\beta}\left\langle c_{\beta,\varepsilon}\left(\beta^{-1}\tau
f_i\right)\left(1-p_{T,\beta,\varepsilon}\right) \tau
f_i\left(f_{T }\right)\tau \sigma, c_{\beta,\varepsilon}\left(\tau
e_k\right)f_{T } {\nabla} ^{\mF,
\beta,\varepsilon}_{\tau
e_k}(\tau \sigma)\right\rangle 
 \\
=\left(O\left( \frac{\varepsilon^2}{\beta^2}\right)+O\left(\frac{1}{\sqrt{T}}\right)\right)\int_{s(M)}|\sigma|^2dv_{s(M)},
\end{multline}
while for $q+1\leq i\leq q+q_1 $ and $1\leq k\leq q_2$,   one has
\begin{multline}\label{11.19f}
 \varepsilon\left\langle c_{\beta,\varepsilon}\left(\varepsilon\tau
f_i\right)\left(1-p_{T,\beta,\varepsilon}\right) \tau
f_i\left(f_{T }\right)\tau \sigma, c_{\beta,\varepsilon}\left(\tau
e_k\right)f_{T } {\nabla} ^{\mF,
\beta,\varepsilon}_{\tau
e_k}(\tau \sigma)\right\rangle 
 \\
=\left(O\left(  \frac{\varepsilon ^2}{\beta^2} \right)+O\left( \frac{1}{\sqrt{T}}\right)\right)\int_{s(M)}|\sigma|^2dv_{s(M)}.
\end{multline}

Now we consider the term for $1\leq i,\, k\leq q_2$ with $i\neq
k$,
\begin{multline}\label{136}
 \left\langle
\left(1-p_{T,\beta,\varepsilon}\right)c_{\beta,\varepsilon}\left(\tau e_i\right) {\nabla}_{\tau
e_i}^{\mF,\beta,\varepsilon}J_{T,\beta,\varepsilon} \sigma,
 c_{\beta,\varepsilon}\left(\tau e_k\right) {\nabla}_{\tau
e_k}^{\mF, \beta,\varepsilon}J_{T,\beta,\varepsilon} \sigma\right\rangle\\
=\left\langle\left(1-p_{T,\beta,\varepsilon}\right) c_{\beta,\varepsilon}\left(\tau e_i\right)
\tau e_i\left(f_{T }\right)\tau \sigma,
 c_{\beta,\varepsilon}\left(\tau e_k\right) \tau
e_k\left(f_{T }\right) \tau\sigma\right\rangle
\\
+\left\langle\left(1-p_{T,\beta,\varepsilon}\right)c_{\beta,\varepsilon}\left(\tau
e_i\right)f_{T } {\nabla} ^{\mF, \beta,\varepsilon}_{\tau e_i} (\tau \sigma),
 c_{\beta,\varepsilon}\left(\tau e_k\right)f_{T } {\nabla}
^{\mF, \beta,\varepsilon}_{\tau e_k}(\tau
\sigma)\right\rangle
\\
+\left\langle\left(1-p_{T,\beta,\varepsilon}\right)c_{\beta,\varepsilon}\left(\tau
e_i\right)f_{T } {\nabla }^{\mF, \beta,\varepsilon}_{\tau e_i}(\tau \sigma),
c_{\beta,\varepsilon}\left(\tau e_k\right) \tau e_k\left(f_{T }\right)
\tau\sigma\right\rangle
\\
+\left\langle \left(1-p_{T,\beta,\varepsilon}\right)c_{\beta,\varepsilon}\left(\tau e_i\right)
\tau e_i\left(f_{T }\right)\tau \sigma,
c_{\beta,\varepsilon}\left(e_k\right)f_{T } {\nabla }^{\mF,
\beta,\varepsilon}_{\tau
e_k} (\tau\sigma)\right\rangle.
\end{multline}

For the first term in the right hand side of (\ref{136}), one has,
as $i\neq k$,
\begin{multline}\label{137}
\left\langle \left(1-p_{T,\beta,\varepsilon}\right)c_{\beta,\varepsilon}\left(\tau e_i\right)
\tau e_i\left(f_{T }\right)\tau \sigma,
 c_{\beta,\varepsilon}\left(\tau e_k\right) \tau
e_k\left(f_{T }\right)\tau \sigma\right\rangle\\
=-\left\langle  \tau e_k\left(f_{T }\right)
   \tau e_i\left(f_{T }\right)\tau \sigma,
 c_{\beta,\varepsilon}\left(\tau e_i\right)c_{\beta,\varepsilon}\left(\tau e_k\right) \tau \sigma\right\rangle=0.
\end{multline}

For the second term in the right hand side of (\ref{136}), one has
by   (\ref{11.2}) and Lemma \ref{t11.5} that for any $x\in s(M)$,
\begin{multline}\label{138}
\int_{\mM_x}\left\langle\left(1-p_{T,\beta,\varepsilon}\right)c_{\beta,\varepsilon}\left(\tau
e_i\right)f_{T }{\nabla} ^{\mF, \beta,\varepsilon}_{\tau e_i} (\tau \sigma),
 c_{\beta,\varepsilon}\left(\tau e_k\right)f_{T } {\nabla}
^{\mF,\beta,\varepsilon}_{\tau e_k}(\tau \sigma)\right\rangle_{(x,Z)}k\,dv_{\mM_x}\\
= \int_{\mM_x}f_{T }^2
\left\langle\left(1-Q\right)c_{\beta,\varepsilon}\left(\tau
e_i\right)(1-Q) {\nabla }^{\mF,
\beta,\varepsilon}_{\tau
e_i} (\tau \sigma),
 c_{\beta,\varepsilon}\left(\tau e_k\right) (1-Q) {\nabla}
^{\mF, \beta,\varepsilon}_{\tau e_k}(\tau
\sigma)\right\rangle_{(x,Z)}k\,dv_{\mM_x}\end{multline}
$$
+O\left(\frac{1}{\sqrt{T}}\right) |\sigma|_x^2
+O\left(\frac{1}{\sqrt{T}}\right) \sum_{i=1}^{q+q_1}\left|\,
^Q\nabla ^{\mF,  \beta,\varepsilon}_{
f_i}(\tau \sigma)\right|_x^2 $$
$$
=
 \left\langle\left(1-Q\right)c_{\beta,\varepsilon}\left(e_i\right)(1-Q) { \nabla} ^{\mF,
\beta,\varepsilon}_{ e_i}
(\tau\sigma),
 c_{\beta,\varepsilon}\left(e_k\right) (1-Q) {\nabla}
^{\mF, \beta,\varepsilon}_{ e_k}
(\tau\sigma)\right\rangle_x$$
$$
+O\left(\frac{1}{\sqrt{T}}\right) |\sigma|_x^2
+O\left(\frac{1}{\sqrt{T}}\right) \sum_{i=1}^{q+q_1}\left|\,
^Q\nabla ^{\mF,  \beta,\varepsilon}_{
f_i}(\tau \sigma)\right|_x^2.$$

Now, one has by (\ref{12.30}) that for any $1\leq i\leq q_2$, at
$x\in s(M)$,
\begin{multline}\label{139}
 (1-Q)c_{\beta,\varepsilon}(e_i)(1-Q) { \nabla} ^{\mF,
\beta,\varepsilon}_{ e_i}
Q\\
=\frac{\varepsilon^{-1}}{2}\sum_{j=1,\, j\neq
i}^{q_2}\sum_{m=q+1}^{q+q_1}\left\langle
\nabla^{T\mM,\beta,\varepsilon}_{e_i}e_j,f_m\right\rangle
c_{\beta,\varepsilon}(e_i)c_{\beta,\varepsilon}(e_j)c_{\beta,\varepsilon}(\varepsilon f_m) .
\end{multline}

For $q+1\leq m\leq q+q_1$, one has, by (\ref{2.2}),
\begin{align}\label{141}
\left\langle \nabla^{T\mM,\beta,\varepsilon}_{e_i}e_j,f_m\right\rangle=
O\left(\varepsilon^{2}\right  ).
\end{align}

From  (\ref{138})-(\ref{141}), one gets that for $x\in s(M)$,
\begin{multline}\label{142}
 \int_{\mM_x}\left\langle\left(1-p_{T,\beta,\varepsilon}\right)c_{\beta,\varepsilon}\left(\tau
e_i\right)f_{T } {\nabla} ^{\mF, \beta,\varepsilon}_{\tau e_i}(\tau \sigma),
 c_{\beta,\varepsilon}\left(\tau e_k\right)f_{T } {\nabla}
^{\mF, \beta,\varepsilon}_{\tau e_k}(\tau \sigma)\right\rangle_{(x,Z)}k\,dv_{\mM_x}\\
= \left(O\left( \varepsilon ^2\right)+O\left({
\frac{1}{\sqrt{T}}}\right)\right)|\sigma|_x^2+O\left(\frac{1}{\sqrt{T}}\right)
\sum_{i=1}^{q+q_1}\left|\, ^Q\nabla ^{\mF,
 \beta,\varepsilon}_{
f_i}(\tau \sigma)\right|_x^2.
\end{multline}

For the third term in the right hand side of (\ref{136}),  since
$i\neq k$,  by (\ref{134a}) and a simple parity consideration, one
has that
\begin{multline}\label{143}
\left\langle\left(1-p_{T,\beta,\varepsilon}\right)c_{\beta,\varepsilon}\left(\tau
e_i\right)f_{T } {\nabla }^{\mF, \beta,\varepsilon}_{\tau e_i}(\tau \sigma),
c_{\beta,\varepsilon}\left(\tau e_k\right) \tau e_k\left(f_{T }\right)
(\tau\sigma)\right\rangle\\
= \left\langle  c_{\beta,\varepsilon}\left(\tau e_i\right)f_{T } {\nabla}
^{\mF, \beta,\varepsilon}_{\tau e_i} (\tau\sigma),
c_{\beta,\varepsilon}\left(\tau e_k\right) \tau e_k\left(f_{T }\right)
\tau\sigma\right\rangle=0.
 \end{multline}

Similarly, for the fourth term in the right hand side of (\ref{136}), one has
\begin{align}\label{144}
\left\langle\left(1-p_{T,\beta,\varepsilon}\right)c_{\beta,\varepsilon}\left(\tau e_i\right)
\tau e_i\left(f_{T }\right) \tau\sigma, c_{\beta,\varepsilon}\left(\tau
e_k\right)f_{T } {\nabla} ^{\mF,
\beta,\varepsilon}_{\tau
e_k}(\tau \sigma) \right\rangle =0.
 \end{align}

By (\ref{i3}),  (\ref{136}), (\ref{137}) and (\ref{142})-(\ref{144}), one gets
\begin{multline}\label{145}
I_3
= \left(O\left(\varepsilon^2\right)+O\left(
\frac{1}{\sqrt{T}}\right)\right)\int_{s(M)}|\sigma|^2dv_{s(M)}
\\
+O\left(\frac{1}{\sqrt{T}}\right)\int_{s(M)}
\sum_{i=1}^{q+q_1}\left|\, ^Q\nabla ^{\mF,
 \beta,\varepsilon}_{
f_i}(\tau \sigma)\right|^2dv_{s(M)}.
\end{multline}

Similarly, by (\ref{i5}), (\ref{128}), (\ref{129}), (\ref{130t}), (\ref{132}) and (\ref{11.19d}), one gets  
\begin{multline}\label{1455}
I_5
= \left(O\left(\frac{\varepsilon^2}{\beta^2}\right)+O\left(
\frac{1}{\sqrt{T}}\right)\right)\int_{s(M)}|\sigma|^2dv_{s(M)}
\\
+O\left(\frac{1}{\sqrt{T}}\right)\int_{s(M)}
\sum_{i=1}^{q+q_1}\left|\, ^Q\nabla ^{\mF,
 \beta,\varepsilon}_{
f_i}(\tau \sigma)\right|^2dv_{s(M)},
\end{multline}
while by  (\ref{i6}), (\ref{128}), (\ref{129}),  (\ref{130t}), (\ref{132a}) and (\ref{11.19f}), one gets 
\begin{multline}\label{1456}
I_6
= \left(O\left(1+\frac{\varepsilon^2}{\beta^2}\right)+O\left(
\frac{1}{\sqrt{T}}\right)\right)\int_{s(M)}|\sigma|^2dv_{s(M)}
\\
+O\left(\frac{1}{\sqrt{T}}\right)\int_{s(M)}
\sum_{i=1}^{q+q_1}\left|\, ^Q\nabla ^{\mF,
 \beta,\varepsilon}_{
f_i}(\tau \sigma)\right|^2dv_{s(M)}.
\end{multline}

\subsection{Estimates of  the terms $I_k$, $1\leq k\leq 6$, Part II}\label{s2.7}

 In this subsection, we  deal with the term left in (\ref{117}). First of all, by Lemma \ref{t11.4}  it is easy to see that the last term in (\ref{117}) verifies the following estimate,
\begin{multline}\label{117f}
\left\langle c_{\beta,\varepsilon}\left(\widetilde{\tau} f_i\right) c_{\beta,\varepsilon}\left(\widetilde{\tau}
f_j\right) f_T  p_{T,\beta,\varepsilon}\left( \widetilde{\tau}
f_i\left(f_{T }\right) \tau\sigma\right), \,^Q {\nabla} ^{\mF, \beta,\varepsilon}_{\widetilde{\tau}  f_j}(\tau
\sigma)-\tau\left(\left.\,^Q {\nabla} ^{\mF, \beta,\varepsilon}_{\widetilde{\tau}  f_j}(\tau
\sigma)\right|_{s(M)}\right)
\right\rangle \\
= O\left(
\frac{1}{\sqrt{T}}\right)\int_{s(M)}|\sigma|^2dv_{s(M)}
+O\left(\frac{1}{\sqrt{T}}\right)\int_{s(M)}
\sum_{i=1}^{q+q_1}\left|\, ^Q\nabla ^{\mF,
 \beta,\varepsilon}_{
f_i}(\tau \sigma)\right|^2dv_{s(M)}.
  \end{multline}
Thus  we need to deal with the following  term:
\begin{multline}\label{117c}
 \left\langle c_{\beta,\varepsilon}\left(\widetilde{\tau}  f_i\right) c_{\beta,\varepsilon}\left(\widetilde{\tau}
f_j\right) \widetilde{\tau}
f_i\left(f_{T }\right) f_T\tau\sigma, \,^Q {\nabla} ^{\mF, \beta,\varepsilon}_{\widetilde{\tau}  f_j}(\tau
\sigma)-\tau\left(\left.\,^Q {\nabla} ^{\mF, \beta,\varepsilon}_{\widetilde{\tau}  f_j}(\tau
\sigma)\right|_{s(M)}\right)
\right\rangle
\\
=\int_\mM\widetilde{\tau}
f_i\left(f_{T }\right) f_T \left\langle c_{\beta,\varepsilon}\left(\widetilde{\tau}  f_i\right) c_{\beta,\varepsilon}\left(\widetilde{\tau}
f_j\right) \tau\sigma, \,^Q {\nabla} ^{\mF, \beta,\varepsilon}_{\widetilde{\tau}  f_j}(\tau
\sigma)-\tau\left(\left.\,^Q {\nabla} ^{\mF, \beta,\varepsilon}_{\widetilde{\tau}  f_j}(\tau
\sigma)\right|_{s(M)}\right)
\right\rangle dv_\mM.
  \end{multline}

In view of (\ref{119}), we need to examine the first order terms (in $Z$) of the inner product  term in the right hand side of 
(\ref{117c}). 

   By  (\ref{12.6mb}) and (\ref{11.3a}), one has   the  following  pointwise formula on $\mM$,
\begin{multline}\label{13.2}
  Z  \left\langle c_{\beta,\varepsilon}\left(\widetilde{\tau}  f_i\right) c_{\beta,\varepsilon}\left(\widetilde{\tau}
f_j\right)\tau\sigma,
 \,  ^Q  {\nabla} ^{\mF, \beta,\varepsilon}_{\widetilde{\tau}  f_j}(\tau \sigma)
\right\rangle\\
 =\left\langle c_{\beta,\varepsilon}\left(\widetilde{\tau}  f_i\right) c_{\beta,\varepsilon}\left(\widetilde{\tau}
f_j\right)\tau\sigma,\, ^Q\nabla^{\mF,  \beta,\varepsilon}_Z
 \,  ^Q \nabla ^{\mF,  \beta,\varepsilon}_{\widetilde{\tau}  f_j}(\tau \sigma)
\right\rangle
\\
 =\left\langle c_{\beta,\varepsilon}\left(\widetilde{\tau}  f_i\right) c_{\beta,\varepsilon}\left(\widetilde{\tau}
f_j\right)\tau\sigma,\left(\, ^QR^{\mF,  \beta,\varepsilon}(Z,\widetilde{\tau}  f_j)+ \,  ^Q \nabla ^{\mF,
 \beta,\varepsilon}_{[Z,\widetilde{\tau}  f_j]}\right)
  \tau \sigma
\right\rangle,
  \end{multline}
  where $\, ^QR^{\mF,  \beta,\varepsilon}$ is the curvature of $\,  ^Q
 {\nabla} ^{\mF,  \beta,\varepsilon}$.

 From  Lemma \ref{t11.4},  (\ref{1311})  and (\ref{13.2}), one has, at $(x,Z)\simeq\psi(x,Z)\in\mM$,
\begin{multline}\label{13.1}
  \left\langle
c_{\beta,\varepsilon}\left(\widetilde{\tau}  f_i\right) c_{\beta,\varepsilon}\left(\widetilde{\tau}  f_j\right)\tau\sigma,
 \,  ^Q {\nabla} ^{\mF, \beta,\varepsilon}_{\widetilde{\tau}  f_j}(\tau \sigma)-\tau\left( \left.\,^Q {\nabla} ^{\mF, \beta,\varepsilon}_{\widetilde{\tau}  f_j}(\tau
\sigma)\right|_{s(M)}\right)
\right\rangle    \\
=\left\langle c_{\beta,\varepsilon}\left(\widetilde{\tau}  f_i\right) c_{\beta,\varepsilon}\left(\widetilde{\tau}
f_j\right)\tau\sigma,\left(\, ^QR^{\mF,  \beta,\varepsilon}(Z,\widetilde{\tau}  f_j)+ \,  ^Q \nabla ^{\mF,
 \beta,\varepsilon}_{[Z,\widetilde{\tau}  f_j]}\right)
  \tau \sigma
\right\rangle\\
 +O\left(|Z|^2\right)\left(
\left|\sigma \right|^2_x+
 \sum_{i=1}^{q+q_1}
\left|^Q\nabla^{\mF, \beta,\varepsilon}_{f_i}(\tau\sigma)\right|^2_x\right).
  \end{multline}

Clearly,
\begin{align}\label{146}
 ^Q R^{\mF, \beta,\varepsilon}=QR^{\mF, \beta,\varepsilon}Q
-Q\nabla^{\mF, \beta,\varepsilon}(1-Q)\nabla^{\mF, \beta,\varepsilon}Q.
 \end{align}

Recall that $f_1',\cdots,f_{q+q_1}'$ is an orthonormal basis of
$\mF\oplus \mF_1^\perp$ with respect to $g^\mF\oplus
g^{\mF_1^\perp}$ not depending on $\beta$ and $\varepsilon$, such that
$f_1',\cdots, f'_q$ is an orthonormal basis of $\mF$ verifying (\ref{3.12a}).

 By definition (cf.  (\ref{1.42a})), one has
\begin{multline}\label{13.3}
\left(QR^{\mF, \beta,\varepsilon}Q\right) (Z,\tau f_j )
=\frac{1}{4\beta^2}\sum_{s,\, t=1}^q \left\langle
R^{T\mM,\beta,\varepsilon}(Z,\tau f_j )\tau f_s,\tau
f_t\right\rangle_{\beta,\varepsilon} c_{\beta,\varepsilon}\left(\beta^{-1}\tau f_s\right)c_{\beta,\varepsilon}\left(\beta^{-1}\tau f_t\right)
\\
+\frac{\varepsilon^{2}}{4}\sum_{s,\, t=q+1}^{q+q_1}
\left\langle R^{T\mM,\beta,\varepsilon}(Z,\tau f_j )\tau f_s,\tau
f_t\right\rangle_{\beta,\varepsilon} c_{\beta,\varepsilon}(\varepsilon\tau f_s)c_{\beta,\varepsilon}(\varepsilon\tau f_t)
 \\
+\frac{\varepsilon }{2\beta}\sum_{s =1}^q\sum_{t
=q+1}^{q+q_1} \left\langle R^{T\mM,\beta,\varepsilon}(Z,\tau f_j )\tau
f_s,\tau f_t\right\rangle_{\beta,\varepsilon} c_{\beta,\varepsilon}\left(\beta^{-1}\tau f_s\right)c_{\beta,\varepsilon}(\varepsilon\tau f_t)
 .
\end{multline}

 If   $1\leq j,\,s,\,t\leq q$, one verifies, by (\ref{1.11a}), (\ref{12.1}),  (\ref{3.12b}) and (\ref{3.12c}) that\footnote{In the  following computations of terms involving curvatures, when the inner product is not indicated with subscripts $\beta$, $\varepsilon$, we view it is associated with $\beta=\varepsilon=1$.}
\begin{multline}\label{147}
\frac{1}{\beta^2}\left \langle R^{T\mM, \beta,\varepsilon}(Z,\tau f_j)\tau f_s,\tau
f_t\right\rangle_{\beta,\varepsilon} = \left \langle R^{T\mM,\beta,
\varepsilon}(f_s',f_t')Z,f_j'\right\rangle+O\left(|Z|^2\right)\\
= \left\langle
\nabla^{T\mM,\beta,\varepsilon}_{f_s'}\nabla^{T\mM,\beta,\varepsilon}_{f_t'}Z,f_j'\right\rangle
-\left\langle
\nabla^{T\mM,\beta,\varepsilon}_{f_t'}\nabla^{T\mM,\beta,\varepsilon}_{f_s'}Z,f_j'\right\rangle
-\left\langle
\nabla^{T\mM,\beta,\varepsilon}_{[f_s',f_t']}Z,f_j'\right\rangle+O\left(|Z|^2\right)\\
=-\left\langle
p\nabla^{T\mM,\beta,\varepsilon}_{f_t'}Z,\nabla^{T\mM,\beta,\varepsilon}_{f_s'}f_j'\right\rangle-\frac{1}{\beta^2\varepsilon^{2}}
\left\langle
p_1^\perp\nabla^{T\mM,\beta,\varepsilon}_{f_t'}Z,\nabla^{T\mM,\beta,\varepsilon}_{f_s'}f_j'\right\rangle
- \frac{1}{\beta^2}\left\langle
p_2^\perp\nabla^{T\mM,\beta,\varepsilon}_{f_t'}Z,\nabla^{T\mM,\beta,\varepsilon}_{f_s'}f_j'\right\rangle\\
+\left\langle
p\nabla^{T\mM,\beta,\varepsilon}_{f_s'}Z,\nabla^{T\mM,\beta,\varepsilon}_{f_t'}f_j'\right\rangle+\frac{1}{\beta^2\varepsilon^{2}}
\left\langle
p_1^\perp\nabla^{T\mM,\beta,\varepsilon}_{f_s'}Z,\nabla^{T\mM,\beta,\varepsilon}_{f_t'}f_j'\right\rangle
+\frac{1}{\beta^2}\left\langle
p_2^\perp\nabla^{T\mM,\beta,\varepsilon}_{f_s'}Z,\nabla^{T\mM,\beta,\varepsilon}_{f_t'}f_j'\right\rangle\\
+f_s'\left(\left\langle\nabla^{T\mM,\beta,\varepsilon}_{f_t'}Z,f_j'\right\rangle\right)
-f_t'\left(\left\langle\nabla^{T\mM,\beta,\varepsilon}_{f_s'}Z,f_j'\right\rangle\right)-\left\langle
\nabla^{T\mM ,\beta,\varepsilon}_{[f_s',f_t']}Z,f_j'\right\rangle
+O\left(|Z|^2\right)\\
=O\left(\varepsilon^2|Z|\right)+O\left(|Z|^2\right).
\end{multline}

 If   $1\leq j\leq q$ and $q+1\leq s,\,t\leq q+q_1$, one has, in view of (\ref{f4}),  
\begin{multline}\label{148}
\varepsilon^{2}\left \langle R^{T\mM, \beta,\varepsilon}(Z,\tau
f_j)\tau f_s,\tau f_t\right\rangle_{\beta,\varepsilon}
=\beta^2\varepsilon^{2}\left \langle R^{T\mM,\beta,
\varepsilon}(f_s',f_t')Z,f_j'\right\rangle+O\left(|Z|^2\right)\\
=\beta^2\varepsilon^{2}\left\langle
\nabla^{T\mM,\beta,\varepsilon}_{f_s'}\nabla^{T\mM,\beta,\varepsilon}_{f_t'}Z,f_j'\right\rangle
-\beta^2\varepsilon^{2}\left\langle
\nabla^{T\mM,\beta,\varepsilon}_{f_t'}\nabla^{T\mM,\beta,\varepsilon}_{f_s'}Z,f_j'\right\rangle
-\beta^2\varepsilon^{2}\left\langle
\nabla^{T\mM,\beta,\varepsilon}_{[f_s',f_t']}Z,f_j'\right\rangle+O\left(|Z|^2\right)
\\
=-\beta^2\varepsilon^{2}\left\langle
p\nabla^{T\mM,\beta,\varepsilon}_{f_t'}Z,\nabla^{T\mM,\beta,\varepsilon}_{f_s'}f_j'\right\rangle-
\left\langle
p_1^\perp\nabla^{T\mM,\beta,\varepsilon}_{f_t'}Z,\nabla^{T\mM,\beta,\varepsilon}_{f_s'}f_j'\right\rangle
- \varepsilon^{2}\left\langle
p_2^\perp\nabla^{T\mM,\beta,\varepsilon}_{f_t'}Z,\nabla^{T\mM,\beta,\varepsilon}_{f_s'}f_j'\right\rangle\\
+\beta^2\varepsilon^{2}\left\langle
p\nabla^{T\mM,\beta,\varepsilon}_{f_s'}Z,\nabla^{T\mM,\beta,\varepsilon}_{f_t'}f_j'\right\rangle+
\left\langle
p_1^\perp\nabla^{T\mM,\beta,\varepsilon}_{f_s'}Z,\nabla^{T\mM,\beta,\varepsilon}_{f_t'}f_j'\right\rangle
+ \varepsilon^{2}\left\langle
p_2^\perp\nabla^{T\mM,\beta,\varepsilon}_{f_s'}Z,\nabla^{T\mM,\beta,\varepsilon}_{f_t'}f_j'\right\rangle\\
+\beta^2\varepsilon^{2}f_s'\left(\left\langle\nabla^{T\mM,\beta,\varepsilon}_{f_t'}Z,f_j'\right\rangle\right)
-\beta^2\varepsilon^{2}f_t'\left(\left\langle
\nabla^{T\mM,\beta,\varepsilon}_{f_s'}Z,f_j'\right\rangle\right)-\beta^2\varepsilon^{2}\left\langle
\nabla^{T\mM,\beta,\varepsilon
}_{[f_s',f_t']}Z,f_j'\right\rangle+O\left(|Z|^2\right)\\
 =O\left(\varepsilon^2|Z|\right)+O\left(|Z|^2\right).
\end{multline}

 If  $1\leq j,\, t\leq q$ and $q+1\leq  s\leq q+q_1$, by Lemma \ref{t11.3} one has
\begin{multline}\label{148a}
\frac{\varepsilon}{\beta} \left \langle R^{T\mM,\beta,
\varepsilon}(Z,\tau f_j) \tau f_s,\tau
f_t\right\rangle_{\beta,\varepsilon}
=\beta\varepsilon \left \langle R^{T\mM,\beta,
\varepsilon}(f_s',f_t')Z,f_j'\right\rangle+O\left(|Z|^2\right)\\
=\beta\varepsilon\left\langle
\nabla^{T\mM,\beta,\varepsilon}_{f_s'}\nabla^{T\mM,\beta,\varepsilon}_{f_t'}Z,f_j'\right\rangle
-\beta\varepsilon\left\langle
\nabla^{T\mM,\beta,\varepsilon}_{f_t'}\nabla^{T\mM,\beta,\varepsilon}_{f_s'}Z,f_j\right\rangle
-\beta\varepsilon\left\langle
\nabla^{T\mM,\beta,\varepsilon}_{[f_s',f_t']}Z,f_j'\right\rangle+O\left(|Z|^2\right)
\\
=-\beta\varepsilon\left\langle
p\nabla^{T\mM,\beta,\varepsilon}_{f_t'}Z,\nabla^{T\mM,\beta,\varepsilon}_{f_s'}f_j'\right\rangle-\frac{1}{\beta\varepsilon }
\left\langle
p_1^\perp\nabla^{T\mM,\beta,\varepsilon}_{f_t'}Z,\nabla^{T\mM,\beta,\varepsilon}_{f_s'}f_j'\right\rangle
-\frac{\varepsilon}{\beta}\left\langle
p_2^\perp\nabla^{T\mM,\beta,\varepsilon}_{f_t'}Z,\nabla^{T\mM,\beta,\varepsilon}_{f_s'}f_j'\right\rangle
\\
+\beta\varepsilon\left\langle
p\nabla^{T\mM,\beta,\varepsilon}_{f_s'}Z,\nabla^{T\mM,\beta,\varepsilon}_{f_t'}f_j'\right\rangle+
\frac{1}{\beta\varepsilon }\left\langle
p_1^\perp\nabla^{T\mM,\beta,\varepsilon}_{f_s'}Z,\nabla^{T\mM,\beta,\varepsilon}_{f_t'}f_j'\right\rangle
+\frac{\varepsilon}{\beta} \left\langle
p_2^\perp\nabla^{T\mM,\beta,\varepsilon}_{f_s'}Z,\nabla^{T\mM,\beta,\varepsilon}_{f_t'}f_j'\right\rangle
\\
+\beta\varepsilon f_s'\left(\left\langle\nabla^{T\mM,\beta,\varepsilon}_{f_t'}Z,f_j'\right\rangle\right)
- \beta\varepsilon f_t'\left(\left\langle\nabla^{T\mM,\beta,\varepsilon}_{f_s'}Z,f_j'\right\rangle\right)- \beta\varepsilon\left\langle
\nabla^{T\mM,\beta,\varepsilon }_{[f_s',f_t']}Z,f_j'\right\rangle
+O\left(|Z|^2\right)
\\
=O\left(\frac{\varepsilon|Z|}{\beta}\right)+O\left(|Z|^2\right).
\end{multline}

 If  $q+1\leq j\leq q+q_1$ and $1\leq s,\,t\leq q$, one has
\begin{multline}\label{149}
\frac{1}{\beta^2}\left \langle R^{T\mM, \beta,\varepsilon}(Z,\tau f_j)\tau f_s,\tau
f_t\right\rangle_{\beta,\varepsilon} =\frac{1}{\beta^2\varepsilon^{2}}
\left \langle R^{T\mM,\beta,
\varepsilon}(f_s',f_t')Z,f_j'\right\rangle+O\left(|Z|^2\right)\\
=\frac{1}{\beta^2\varepsilon^{2}}\left\langle
\nabla^{T\mM,\beta,\varepsilon}_{f_s'}\nabla^{T\mM,\beta,\varepsilon}_{f_t'}Z,f_j'\right\rangle
-\frac{1}{\beta^2\varepsilon^{2}}\left\langle
\nabla^{T\mM,\beta,\varepsilon}_{f_t'}\nabla^{T\mM,\beta,\varepsilon}_{f_s'}Z,f_j'\right\rangle
-\frac{1}{\beta^2\varepsilon^{2}}\left\langle
\nabla^{T\mM,\beta,\varepsilon}_{[f_s',f_t']}Z,f_j'\right\rangle+O\left(|Z|^2\right)\\
=-\left\langle
p\nabla^{T\mM,\beta,\varepsilon}_{f_t'}Z,\nabla^{T\mM,\beta,\varepsilon}_{f_s'}f_j'\right\rangle-\frac{1}{\beta^2\varepsilon^{2}}
\left\langle
p_1^\perp\nabla^{T\mM,\beta,\varepsilon}_{f_t'}Z,\nabla^{T\mM,\beta,\varepsilon}_{f_s'}f_j'\right\rangle
- \frac{1}{\beta^2}\left\langle
p_2^\perp\nabla^{T\mM,\beta,\varepsilon}_{f_t'}Z,\nabla^{T\mM,\beta,\varepsilon}_{f_s'}f_j'\right\rangle\\
+\left\langle
p\nabla^{T\mM,\beta,\varepsilon}_{f_s'}Z,\nabla^{T\mM,\beta,\varepsilon}_{f_t'}f_j'\right\rangle+\frac{1}{\beta^2\varepsilon^{2}}
\left\langle
p_1^\perp\nabla^{T\mM,\beta,\varepsilon}_{f_s'}Z,\nabla^{T\mM,\beta,\varepsilon}_{f_t'}f_j'\right\rangle
+ \frac{1}{\beta^2}\left\langle
p_2^\perp\nabla^{T\mM,\beta,\varepsilon}_{f_s'}Z,\nabla^{T\mM,\beta,\varepsilon}_{f_t'}f_j'\right\rangle\\
+\frac{1}{\beta^2\varepsilon^{2}}f_s'\left(\left\langle\nabla^{T\mM,\beta,\varepsilon}_{f_t'}Z,f_j'\right\rangle\right)
-\frac{1}{\beta^2\varepsilon^{2}}f_t'\left(\left\langle\nabla^{T\mM,\beta,\varepsilon}_{f_s'}Z,f_j'\right\rangle\right)-\frac{1}{\beta^2\varepsilon^{2}}\left\langle
\nabla^{T\mM,\beta,\varepsilon }_{[f_s',f_t']}Z,f_j'\right\rangle
+O\left(|Z|^2\right)\\
=O\left(\frac{|Z|}{\beta^2}\right)+O\left(|Z|^2\right).
\end{multline}

 If  $q+1\leq j,\,s,\,t\leq q+q_1$, one has
\begin{multline}\label{150}
\varepsilon^{2}\left \langle R^{T\mM,\beta, \varepsilon}(Z,\tau
f_j)\tau f_s,\tau f_t\right\rangle_{\beta,\varepsilon} = \left \langle
R^{T\mM,\beta,
\varepsilon}(f_s',f_t')Z,f_j'\right\rangle+O\left(|Z|^2\right)\\
= \left\langle
\nabla^{T\mM,\beta, \varepsilon}_{f_s'}\nabla^{T\mM,\beta, \varepsilon}_{f_t'}Z,f_j'\right\rangle
- \left\langle
\nabla^{T\mM,\beta, \varepsilon}_{f_t'}\nabla^{T\mM,\beta, \varepsilon}_{f_s'}Z,f_j'\right\rangle
- \left\langle
\nabla^{T\mM,\beta, \varepsilon}_{[f_s',f_t']}Z,f_j'\right\rangle+O\left(|Z|^2\right) \\
=-\beta^2\varepsilon^{2}\left\langle
p\nabla^{T\mM,\beta, \varepsilon}_{f_t'}Z,\nabla^{T\mM,\beta, \varepsilon}_{f_s'}f_j'\right\rangle-
\left\langle
p_1^\perp\nabla^{T\mM,\beta, \varepsilon}_{f_t'}Z,\nabla^{T\mM,\beta, \varepsilon}_{f_s'}f_j'\right\rangle
- \varepsilon^{2}\left\langle
p_2^\perp\nabla^{T\mM,\beta, \varepsilon}_{f_t'}Z,\nabla^{T\mM,\beta, \varepsilon}_{f_s'}f_j'\right\rangle\\
+\beta^2\varepsilon^{2}\left\langle
p\nabla^{T\mM,\beta, \varepsilon}_{f_s'}Z,\nabla^{T\mM,\beta,\varepsilon}_{f_t'}f_j'\right\rangle+
\left\langle
p_1^\perp\nabla^{T\mM,\beta, \varepsilon}_{f_s'}Z,\nabla^{T\mM,\beta, \varepsilon}_{f_t'}f_j'\right\rangle
+ \varepsilon^{2}\left\langle
p_2^\perp\nabla^{T\mM,\beta, \varepsilon}_{f_s'}Z,\nabla^{T\mM,\beta, \varepsilon}_{f_t'}f_j'\right\rangle\\
+
f_s'\left(\left\langle\nabla^{T\mM,\beta, \varepsilon}_{f_t'}Z,f_j'\right\rangle\right)
-
f_t'\left(\left\langle\nabla^{T\mM,\beta, \varepsilon}_{f_s'}Z,f_j'\right\rangle\right)-
\left\langle \nabla^{T\mM
,\beta, \varepsilon}_{[f_s',f_t']}Z,f_j'\right\rangle
+O\left(|Z|^2\right)\\
=O\left( {|Z|}\right)+O\left(|Z|^2\right).
\end{multline}

If   $q+1\leq j, \,t\leq q+q_1$ and $1\leq s\leq q$, one has
\begin{multline}\label{150c}
-\frac{\varepsilon}{\beta}\left \langle R^{T\mM,\beta,
\varepsilon}(Z,\tau f_j)\tau f_s,\tau f_t\right\rangle_{\beta,\varepsilon}
= {\beta\varepsilon}  \left \langle R^{T\mM,\beta,
\varepsilon}(Z,  f_j')  f_t',  f_s'\right\rangle+O\left(|Z|^2\right)\\
= {\beta\varepsilon} \left\langle
\nabla^{T\mM,\beta,\varepsilon}_{Z}\nabla^{T\mM,\beta,\varepsilon}_{f_j'}f_t',f_s'\right\rangle
- {\beta\varepsilon} \left\langle
\nabla^{T\mM,\beta,\varepsilon}_{f_j'}\nabla^{T\mM,\beta,\varepsilon}_{Z}f_t',f_s'\right\rangle
-  {\beta\varepsilon} \left\langle
\nabla^{T\mM,\beta,\varepsilon}_{[Z,f_j']}f_t',f_s'\right\rangle+O\left(|Z|^2\right)\end{multline}
$$
=-\beta\varepsilon \left\langle
p\nabla^{T\mM,\beta,\varepsilon}_{f_j'}f_t',\nabla^{T\mM,\beta,\varepsilon}_{Z}f_s'\right\rangle-
\frac{1}{\beta\varepsilon} \left\langle
p_1^\perp\nabla^{T\mM,\beta,\varepsilon}_{f_j'}f_t',\nabla^{T\mM,\beta,\varepsilon}_{Z}f_s'\right\rangle
-\frac{\varepsilon}{\beta}  \left\langle
p_2^\perp\nabla^{T\mM,\beta,\varepsilon}_{f_j'}f_t',\nabla^{T\mM,\beta,\varepsilon}_{Z}f_s'\right\rangle
$$
$$
+\beta\varepsilon \left\langle
p\nabla^{T\mM,\beta,\varepsilon}_{Z}f_t',\nabla^{T\mM,\beta,\varepsilon}_{f_j'}f_s'\right\rangle+
\frac{1}{\beta\varepsilon} \left\langle
p_1^\perp\nabla^{T\mM,\beta,\varepsilon}_{Z}f_t',\nabla^{T\mM,\beta,\varepsilon}_{f_j'}f_s'\right\rangle
+ \frac{\varepsilon}{\beta} \left\langle
p_2^\perp\nabla^{T\mM,\beta,\varepsilon}_{Z}f_t',\nabla^{T\mM,\beta,\varepsilon}_{f_j'}f_s'\right\rangle$$
$$
+ {\beta\varepsilon}
Z\left(\left\langle\nabla^{T\mM,\beta,\varepsilon}_{f_j'}f_t',f_s'\right\rangle\right)
- {\beta\varepsilon}
f_j'\left(\left\langle\nabla^{T\mM,\beta,\varepsilon}_{Z}f_t',f_s'\right\rangle\right)- {\beta\varepsilon}
\left\langle \nabla^{T\mM
,\beta,\varepsilon}_{[Z,f_j']}f_t',f_s'\right\rangle+O\left(|Z|^2\right)$$
$$
=O\left(\frac{\varepsilon|Z|}
{\beta}\right)+O\left(|Z|^2\right).$$

\comment{

If   $1\leq j \leq q+q_1$ and $1\leq s,\,t\leq q_2$, one has
\begin{multline}\label{150d}
  \left \langle R^{\mF_2^\perp,\beta,
\varepsilon}(Z, \tau f_j')  \tau e_t,  \tau e_s\right\rangle \\
= \left\langle
\nabla^{\mF_2^\perp,\beta,\varepsilon}_{Z}\nabla^{\mF_2^\perp,\beta,\varepsilon}_{f_j'}\tau e_t,\tau e_s\right\rangle
-\left\langle
\nabla^{\mF_2^\perp,\beta,\varepsilon}_{f_j'}\nabla^{\mF_2^\perp,\beta,\varepsilon}_{Z}\tau e_t,\tau e_s\right\rangle
-   \left\langle
\nabla^{\mF_2^\perp,\beta,\varepsilon}_{[Z,f_j']}\tau e_t,\tau e_s\right\rangle+O\left(|Z|^2\right)
\\
=
- \left\langle
p_2^\perp\nabla^{T\mM,\beta,\varepsilon}_{f_j'}\tau e_t,\nabla^{T\mM,\beta,\varepsilon}_{Z}\tau e_s\right\rangle
+ \left\langle
p_2^\perp\nabla^{T\mM,\beta,\varepsilon}_{Z}\tau e_t,\nabla^{T\mM,\beta,\varepsilon}_{f_j'}\tau e_s\right\rangle
\\
+
Z\left(\left\langle\nabla^{T\mM,\beta,\varepsilon}_{f_j'}\tau e_t,\tau e_s\right\rangle\right)
-
f_j'\left(\left\langle\nabla^{T\mM,\beta,\varepsilon}_{Z}\tau e_t,\tau e_s\right\rangle\right)-
\left\langle \nabla^{T\mM
,\beta,\varepsilon}_{[Z,f_j']}\tau e_t,\tau e_s\right\rangle+O\left(|Z|^2\right)\\
=O\left( |Z| \right)+O\left(|Z|^2\right).
\end{multline}

If $1\leq j\leq q+q_1$ and    $q+1\leq  s,\,t\leq q+q_1$, one has
\begin{multline}\label{150e}
  \left \langle R^{\mF_1^\perp,\beta,
\varepsilon}(Z, \tau f_j)  f_t',  f_s'\right\rangle \\
=
\left\langle
\nabla^{\mF_1^\perp,\beta,\varepsilon}_{Z}\nabla^{\mF_1^\perp,\beta,\varepsilon}_{f_j'}  f_t', f_s'\right\rangle
-\left\langle
\nabla^{\mF_1^\perp,\beta,\varepsilon}_{f_j'}\nabla^{\mF_1^\perp,\beta,\varepsilon}_{Z}f_t',f_s'\right\rangle
-   \left\langle
\nabla^{\mF_1^\perp,\beta,\varepsilon}_{[Z,f_j']}f'_t,f_s'\right\rangle+O\left(|Z|^2\right)
\\
=   \left \langle R^{T\mM,\beta,
\varepsilon}(Z,  f_j')  f_t',  f_s'\right\rangle
+\beta^2\varepsilon^2\left\langle
p \nabla^{T\mM,\beta,\varepsilon}_{f_j'}f_t',\nabla^{T\mM,\beta,\varepsilon}_{Z}f_s'\right\rangle
+\varepsilon^2 \left\langle
p_2^\perp\nabla^{T\mM,\beta,\varepsilon}_{f_j'}f_t',\nabla^{T\mM,\beta,\varepsilon}_{Z}f_s'\right\rangle
\\
-\beta^2\varepsilon^2\left\langle
p \nabla^{T\mM,\beta,\varepsilon}_Zf_t',\nabla^{T\mM,\beta,\varepsilon}_{f_j'}f_s'\right\rangle
-\varepsilon^2 \left\langle
p_2^\perp\nabla^{T\mM,\beta,\varepsilon}_{Z}f_t',\nabla^{T\mM,\beta,\varepsilon}_{f_j'}f_s'\right\rangle +O\left(|Z|^2\right) .
\end{multline}

By  (\ref{148}) and (\ref{150e}), one sees that when $1\leq j\leq q$, $q+1\leq s,\ t\leq q+q_1$, one has
\begin{align}\label{150f}
  \left \langle R^{\mF_1^\perp,\beta,
\varepsilon}(Z, \tau f_j)  f_t',  f_s'\right\rangle= O\left(\varepsilon^2|Z|\right)+O\left(|Z|^2\right),
 \end{align}
while by (\ref{150}) and (\ref{150e}), one sees that when $q+1\leq j\leq q+q_1$, $q+1\leq s,\ t\leq q+q_1$, one has
\begin{align}\label{150g}
  \left \langle R^{\mF_1^\perp,\beta,
\varepsilon}(Z, \tau f_j)  f_t',  f_s'\right\rangle= O\left( |Z|\right)+O\left(|Z|^2\right).
 \end{align}

}

Now  from (\ref{134a})-(\ref{11.20}), one
verifies easily that
\begin{align}\label{151}
  (1-Q)\nabla^{\mF, \beta,\varepsilon}_{Z}Q=O\left(
{\varepsilon} |Z|\right)+O\left(|Z|^2\right).
 \end{align}
 Similarly, one has
\begin{align}\label{152}
  Q\nabla^{\mF, \beta,\varepsilon}_{Z}(1-Q)=O\left(
{\varepsilon} |Z|\right)+O\left(|Z|^2\right).
 \end{align}

 On the other hand, by (\ref{127})-(\ref{127b}), one finds that
 for $1\leq j\leq q$,
\begin{align}\label{153}
  (1-Q)\nabla^{\mF, \beta,\varepsilon}_{\tau f_j}Q=O\left(
\varepsilon \right)+O_{\beta,\varepsilon}(|Z|).
 \end{align}
 Similarly,
\begin{align}\label{154}
 Q \nabla^{\mF, \beta,\varepsilon}_{\tau f_j}(1-Q)=O\left(
 \varepsilon \right)+O_{\beta,\varepsilon}(|Z|).
 \end{align}

 While for $q+1\leq j\leq q+q_1$, by (\ref{127}), (\ref{127d}) and
 (\ref{127e}), one has
\begin{align}\label{155}
  (1-Q)\nabla^{\mF, \beta,\varepsilon}_{\tau f_j}Q=O\left(\beta^{-1}+
\varepsilon^{-1}\right)+O_{\beta,\varepsilon}(|Z|).
 \end{align}
 Similarly,
\begin{align}\label{156}
 Q \nabla^{\mF, \beta,\varepsilon}_{\tau f_j}(1-Q)=O\left(\beta^{-1}+
\varepsilon^{-1}\right)+O_{\beta,\varepsilon}(|Z|).
 \end{align}

 From   (\ref{146})-(\ref{156}), one
 gets that  if $1\leq i, \  j\leq q+q_1 $,   then the
 following identity holds at $(x,Z)$ near $s(M)$,
\begin{align}\label{157}
 \left\langle c_{\beta,\varepsilon}\left( \widetilde{\tau} f_i\right)c_{\beta,\varepsilon}\left(\widetilde{\tau} f_j\right) \, ^Q
R^{\mF,\beta,\varepsilon}\left(Z,\widetilde{\tau} f_j\right)\tau
\sigma,\tau\sigma\right\rangle
=\left(O\left(  \frac{\varepsilon}{\beta^2} |Z|\right)+O\left(|Z|^2\right)\right)|\sigma|^2.
 \end{align}

Now we examine the   term
$$\left\langle c_{\beta,\varepsilon}\left(\widetilde{\tau} f_i\right) c_{\beta,\varepsilon}\left(\widetilde{\tau}
f_j\right)\tau\sigma,  \,  ^Q \nabla ^{\mF,
 \beta,\varepsilon}_{[Z,\widetilde{\tau} f_j]}
 ( \tau \sigma)
\right\rangle$$ in (\ref{13.1}).

By (\ref{12.6mb}) and
(\ref{131a}), one has
\begin{align}\label{13.5}
\left(p+p_1^\perp\right)[Z,\tau
f_j]=-\left(p+p_1^\perp\right)\nabla^{T\mM,\beta,\varepsilon}_{\tau
f_j}Z
=-\sum_{k=1}^{q_2}z_k\left(p+p_1^\perp\right)\nabla^{T\mM,\beta,\varepsilon}_{\tau
f_j}(\tau e_k) .
 \end{align}

For any $1\leq k\leq q_2$, $1\leq j\leq q$, by (\ref{3.12c}) one verifies easily
that
\begin{multline}\label{159}
 \left(p+p_1^\perp\right)\nabla^{T\mM,\beta,\varepsilon}_{\tau f_j}({\tau e_k})=\sum_{s=1}^q\left\langle
 \nabla^{T\mM,\beta,\varepsilon}_{\tau f_j}(\tau e_k),f_s'\right\rangle
 f_s'+ \sum_{s=q+1}^{q+q_1}\left\langle
 \nabla^{T\mM,\beta,\varepsilon}_{\tau f_j}(\tau e_k),f_s'\right\rangle
 f_s'\\
 =\sum_{s=1}^q   O_{\beta,\varepsilon}\left(
 |Z|\right)
 f_s'+\sum_{s=q+1}^{q+q_1}\left(O\left(\varepsilon^{2} \right)+O_{\beta,\varepsilon}\left(
 |Z|\right)\right)
 f_s'.
 \end{multline}

 By (\ref{13.5}) and (\ref{159}),
   for  $1\leq j\leq q$, one has,
\begin{multline}\label{159a}
\frac{1}{\beta}  \, ^Q\nabla^{\mF, \beta,\varepsilon}_{\left(p+p_1^\perp\right)[Z,\tau
f_j]}(\tau \sigma) =\sum_{i=1}^q O\left( |Z|^2\right)\,
^Q\nabla^{\mF, \beta,\varepsilon}_{  f_i'}(\tau\sigma) \\
+\sum_{i=q+1}^{q+q_1}O\left(\frac{\varepsilon^{2}|Z|}{\beta}+|Z|^2\right)
\, ^Q\nabla^{\mF, \beta,\varepsilon}_{  f_i'}(\tau \sigma) .
 \end{multline}

 Similarly, for $1\leq k\leq q_2$, $q+1\leq j\leq q+q_1$, one has
\begin{align}\label{160}
 p \nabla^{T\mM,\beta,\varepsilon}_{\tau f_j}(\tau e_k)=\sum_{s=1}^q\left\langle
 \nabla^{T\mM,\beta,\varepsilon}_{\tau f_j}(\tau e_k),  f_s'\right\rangle
 f_s'
 =\sum_{s=1}^q
O\left(
\beta^{-2}\right)f_s '+\sum_{s=1}^q
O_{\beta,\varepsilon}\left(
|Z|\right)f_s '.
 \end{align}
 Thus, for   $q+1\leq j\leq q+q_1$, one has,
\begin{align}\label{160a}
\varepsilon   \,
^Q\nabla^{\mF, \beta,\varepsilon}_{ p [Z,\tau
f_j]}(\tau \sigma)
=\sum_{i=1}^q\left(O\left(\frac{\varepsilon|Z|}{\beta^{2}}\right)+O\left(|Z|^2\right)\right)
\, ^Q\nabla^{\mF, \beta,\varepsilon}_{f_i'}(\tau\sigma)    .
 \end{align}

For  $1\leq k\leq q_2$, $q+1\leq j\leq q+q_1$, one has by (\ref{12.2})
\begin{align}\label{160b}
 p_1^\perp \nabla^{T\mM,\beta,\varepsilon}_{\tau f_j}(\tau e_k)=\sum_{s=q+1}^{q+q_1}\left\langle
 \nabla^{T\mM,\beta,\varepsilon}_{  f_j'}(\tau e_k),  f_s'\right\rangle
 f_s' +O_{\beta,\varepsilon}\left(
|Z|\right)
.
 \end{align}
Thus for 
  $q+1\leq j\leq q+q_1$, one has by (\ref{13.5}) and (\ref{160b}), 
\begin{align}\label{160c}
 \varepsilon   \,
^Q\nabla^{\mF, \beta,\varepsilon}_{ p_1^\perp [Z,\tau
f_j]}(\tau \sigma)
=-\varepsilon\sum_{k=1}^{q_2}\sum_{s=q+1}^{q+q_1} \left(z_k\left\langle
 \nabla^{T\mM,\beta,\varepsilon}_{  f_j'}(\tau e_k),  f_s'\right\rangle+O\left(|Z|^2\right)\right)
\, ^Q\nabla^{\mF, \beta,\varepsilon}_{f_s'}(\tau\sigma)  
.
 \end{align}

 Now  for any $1\leq j\leq q +q_1$, one has
\begin{multline}\label{13.6}
  p_2^\perp [Z,\tau
f_j]=p_2^\perp \nabla^{T\mM}_Z(\tau f_j)- \nabla^{ \mF_2^\perp
}_{\tau f_j}Z\\
 =\sum_{k=1}^{q_2}\left\langle \nabla^{T\mM}_Z(\tau
f_j),\tau e_k\right\rangle\tau e_k -\sum_{k=1}^{q_2}\tau
f_j(z_k)\tau e_k-\sum_{k=1}^{q_2}z_k \nabla^{\mF_2^\perp }_{\tau
f_j}(\tau e_k) .
 \end{multline}

 From (\ref{13.6}) and Lemmas \ref{t11.4}, \ref{t11.5}, one finds
\begin{multline}\label{13.7}
  \,
^Q\nabla^{\mF, \beta,\varepsilon}_{ p_2^\perp [Z,\tau f_j]}(\tau
\sigma) = -\sum_{k=1}^{q_2}\tau
f_j(z_k)\,^Q\nabla^{\mF, \beta,\varepsilon}_{\tau
e_k}(\tau\sigma)\\
+O\left(|Z|^2\right)\left(|\sigma|_x+\sum_{k=1}^{q+q_1}\left|
\,^Q\nabla^{\mF, \beta,\varepsilon}_{
f_k}(\tau\sigma)\right|_x\right).
 \end{multline}

For another section $\sigma'$ on $s(M)$,   one has  
\begin{multline}\label{13.8}
Z\left\langle \,^Q\nabla^{\mF, \beta,\varepsilon}_{\tau e_k}(\tau\sigma),\tau
\sigma'\right\rangle=
\left\langle\,^Q\nabla^{\mF, \beta,\varepsilon}_Z
\,^Q\nabla^{\mF, \beta,\varepsilon}_{\tau e_k}(\tau\sigma),\tau
\sigma'\right\rangle \\
=\left\langle \,^QR^{\mF, \beta,\varepsilon}(Z,{\tau e_k})\tau\sigma,\tau
\sigma'\right\rangle +\left\langle
\,^Q\nabla^{\mF, \beta,\varepsilon}_{[Z,\tau e_k]}(\tau\sigma),\tau
\sigma'\right\rangle.
 \end{multline}

As in (\ref{13.6}), one verifies
\begin{align}\label{13.9}
    [Z,\tau
  e_k]=- \nabla^{ \mF_2^\perp }_{\tau e_k}Z
=-\sum_{j=1}^{q_2}\tau e_k(z_j)\tau e_j-\sum_{j=1}^{q_2}z_j
\nabla^{\mF_2^\perp }_{\tau e_k}(\tau e_j) .
 \end{align}

 Clearly,
\begin{align}\label{13.10}
 \tau e_k(z_j)=\delta_{kj}+O(|Z|)   .
 \end{align}

 By Lemma \ref{t11.5} and (\ref{13.8})-(\ref{13.10}), one deduces
 that
\begin{multline}\label{13.11}
 \left\langle \,^Q\nabla^{\mF, \beta,\varepsilon}_{\tau e_k}(\tau\sigma),\tau
\sigma'\right\rangle =\frac{1}{2}\left\langle
\,^QR^{\mF,  \beta,\varepsilon}(Z,{\tau e_k})\tau\sigma,\tau
\sigma'\right\rangle +O\left(|Z|^2\right)\\
  =\frac{1}{2}\sum_{m=1}^{q_2}z_m\left\langle
\,^QR^{\mF,  \beta,\varepsilon}(\tau e_m,{\tau
e_k})\tau\sigma,\tau \sigma'\right\rangle +O\left(|Z|^2\right)  .
 \end{multline}

From (\ref{13.7}) and (\ref{13.11}), one gets
\begin{multline}\label{13.11a}
\left\langle c_{\beta,\varepsilon}(\widetilde{\tau} f_i)c_{\beta,\varepsilon}(\widetilde{\tau} f_j)\tau\sigma, \, ^Q\nabla^{\mF, \beta,\varepsilon}_{p_2^\perp\left[Z,\widetilde{\tau}f_j\right]}(\tau\sigma)\right\rangle _{(x,Z)}
\\
=- \frac{1}{2}\left\langle c_{\beta,\varepsilon}(\widetilde{\tau} f_i)c_{\beta,\varepsilon}(\widetilde{\tau} f_j)\tau\sigma, \,^QR^{\mF,  \beta,\varepsilon}\left(Z,\nabla^{\mF_2^\perp}_{\widetilde{\tau} f_j}Z\right)\tau\sigma\right\rangle _{(x,Z)}+O (|Z|^2) .
 \end{multline}

From (\ref{12.3f}),  (\ref{146}), (\ref{151}), (\ref{152}) and (\ref{13.11a}), one gets that for $1\leq i,\ j\leq q+q_1$, 
\begin{multline}\label{13.11b}
\left\langle c_{\beta,\varepsilon}(\widetilde{\tau} f_i)c_{\beta,\varepsilon}(\widetilde{\tau} f_j)\tau\sigma, \, ^Q\nabla^{\mF, \beta,\varepsilon}_{p_2^\perp\left[Z,\widetilde{\tau}f_j\right]}(\tau\sigma)\right\rangle _{(x,Z)}
\\
=- \frac{1}{2}\left\langle c_{\beta,\varepsilon}(\widetilde{\tau} f_i)c_{\beta,\varepsilon}(\widetilde{\tau} f_j)\tau\sigma,  R^{\mF, \beta,\varepsilon}\left(Z,\nabla^{\mF_2^\perp}_{\widetilde{\tau} f_j}Z\right)\tau\sigma\right\rangle _{(x,Z)}+O \left(\frac{\varepsilon^2|Z|}{|\tau f_j|_{\beta,\varepsilon}}\right) +O\left (|Z|^2\right) .
 \end{multline}

As in (\ref{13.3}), we have
\begin{multline}\label{13.12}
\left(QR^{\mF,  \beta,\varepsilon}Q\right) (\tau e_m,\tau e_k )
\\
=
\frac{1}{4\beta^2}\sum_{s,\, t=1}^q \left\langle
R^{T\mM,\beta,\varepsilon}(\tau e_m,\tau e_k ) \tau f_s,
\tau f_t\right\rangle_{\beta,\varepsilon} c_{\beta,\varepsilon}\left( \beta^{-1}\tau f_s\right)c_{\beta,\varepsilon}\left(\beta^{-1} \tau f_t\right)
 \\
+\frac{\varepsilon ^2}{4}\sum_{s,\, t=q+1}^{q+q_1}
\left\langle R^{T\mM,\beta,\varepsilon}(\tau e_m,\tau e_k ) \tau f_s,
\tau f_t\right\rangle_{\beta,\varepsilon} c_{\beta,\varepsilon}(\varepsilon\tau f_s)c_{\beta,\varepsilon}(\varepsilon \tau f_t)
\\
+\frac{\varepsilon  }{2\beta}\sum_{s =1}^q\sum_{t
=q+1}^{q+q_1} \left\langle R^{T\mM,\beta,\varepsilon}(\tau e_m,\tau e_k
) \tau f_s,\tau f_t\right\rangle_{\beta,\varepsilon} c_{\beta,\varepsilon}\left(\beta^{-1}\tau  f_s\right)c_{\beta,\varepsilon}(\varepsilon\tau
f_t)
 .
 \end{multline}

 If   $1\leq  s,\,t\leq q$, one has, in view of  (\ref{3.12c}) and (\ref{12.27a}), that
\begin{multline}\label{13.13}
\frac{1}{\beta^2}\left \langle R^{T\mM, \beta,\varepsilon}(\tau e_m,\tau e_k)\tau
f_s,\tau f_t\right\rangle_{\beta,\varepsilon} =\left \langle R^{T\mM,\beta,
\varepsilon}(\tau e_m,\tau e_k)f_s',  f_t'\right\rangle+O_{\beta,\varepsilon}\left(|Z|\right)\\
=\left\langle \nabla^{T\mM,\beta,\varepsilon}_{\tau
e_m}\nabla^{T\mM,\beta,\varepsilon}_{\tau e_k}f_s',  f_t'\right\rangle
-\left\langle \nabla^{T\mM,\beta,\varepsilon}_{\tau
e_k}\nabla^{T\mM,\beta,\varepsilon}_{\tau e_m}f_s',  f_t'\right\rangle
-\left\langle
\nabla^{T\mM,\beta,\varepsilon}_{[\tau e_m,\tau e_k]}f_s',  f_t'\right\rangle+O_{\beta,\varepsilon}\left(|Z| \right)\\
=-\left\langle p\nabla^{T\mM,\beta,\varepsilon}_{\tau
e_k}f_s',\nabla^{T\mM,\beta,\varepsilon}_{\tau
e_m}f_t'\right\rangle-\frac{1}{\beta^2\varepsilon^{2}}
\left\langle p_1^\perp\nabla^{T\mM,\beta,\varepsilon}_{\tau
e_k}f_s',\nabla^{T\mM,\beta,\varepsilon}_{\tau e_m}f_t'\right\rangle -
\frac{1}{\beta^2}\left\langle
p_2^\perp\nabla^{T\mM,\beta,\varepsilon}_{\tau e_k}f_s',\nabla^{T\mM,\beta,\varepsilon}_{\tau e_m}f_t'\right\rangle\\
+\left\langle p\nabla^{T\mM,\beta,\varepsilon}_{\tau
e_m}f_s',\nabla^{T\mM,\beta,\varepsilon}_{\tau
e_k}f_t'\right\rangle+\frac{1}{\beta^2\varepsilon^{2}}
\left\langle p_1^\perp\nabla^{T\mM,\beta,\varepsilon}_{\tau
e_m}f_s',\nabla^{T\mM,\beta,\varepsilon}_{\tau e_k}f_t'\right\rangle
+\frac{1}{\beta^2}\left\langle
p_2^\perp\nabla^{T\mM,\beta,\varepsilon}_{\tau e_m}f_s',\nabla^{T\mM,\beta,\varepsilon}_{\tau e_k}f_t'\right\rangle\\
+\tau e_m\left(\left\langle\nabla^{T\mM,\beta,\varepsilon}_{\tau
e_k}f_s',  f_t'\right\rangle\right) -\tau
e_k\left(\left\langle\nabla^{T\mM,\beta,\varepsilon}_{\tau e_m}f_s',
f_t'\right\rangle\right)-\left\langle \nabla^{T\mM,\beta,\varepsilon }_{[\tau
e_m,\tau e_k]}f_s',  f_t'\right\rangle
+O_{\beta,\varepsilon}\left(|Z| \right)\\
=O\left(\frac{\varepsilon^2}{\beta^{2}}\right)
+O_{\beta,\varepsilon}\left(|Z| \right)  .
\end{multline}

 If  $1\leq s\leq q$,  $q+1\leq   t\leq q+q_1$, one has
\begin{multline}\label{13.15}
\frac{\varepsilon}{\beta}\left \langle R^{T\mM,\beta,
\varepsilon}(\tau e_m,\tau e_k)\tau f_s,\tau
f_t\right\rangle_{\beta,\varepsilon} =\frac{1}{\beta\varepsilon }\left \langle R^{T\mM,\beta,
\varepsilon}(\tau e_m,\tau e_k)f_s',  f_t'\right\rangle+O_{\beta,\varepsilon}\left(|Z|\right)
\\
= \frac{1}{\beta\varepsilon }\left\langle
\nabla^{T\mM,\beta,\varepsilon}_{\tau
e_m}\nabla^{T\mM,\beta,\varepsilon}_{\tau e_k}f_s',  f_t'\right\rangle
-\frac{1}{\beta\varepsilon }\left\langle
\nabla^{T\mM,\beta,\varepsilon}_{\tau
e_k}\nabla^{T\mM,\beta,\varepsilon}_{\tau e_m}f_s',  f_t'\right\rangle
-\frac{1}{\beta\varepsilon }\left\langle
\nabla^{T\mM,\beta,\varepsilon}_{[\tau e_m,\tau e_k]}f_s',  f_t'\right\rangle+O_{\beta,\varepsilon}\left(|Z| \right)
\end{multline}
$$
=- {\beta\varepsilon }\left\langle
p\nabla^{T\mM,\beta,\varepsilon}_{\tau
e_k}f_s',\nabla^{T\mM,\beta,\varepsilon}_{\tau e_m}f_t'\right\rangle-
\frac{1}{\beta\varepsilon }\left\langle
p_1^\perp\nabla^{T\mM,\beta,\varepsilon}_{\tau
e_k}f_s',\nabla^{T\mM,\beta,\varepsilon}_{\tau e_m}f_t'\right\rangle -
\frac{\varepsilon}{\beta  }\left\langle
p_2^\perp\nabla^{T\mM,\beta,\varepsilon}_{\tau e_k}f_s',\nabla^{T\mM,\beta,\varepsilon}_{\tau e_m}f_t'\right\rangle
$$
$$
+ {\beta\varepsilon }\left\langle
p\nabla^{T\mM,\beta,\varepsilon}_{\tau
e_m}f_s',\nabla^{T\mM,\beta,\varepsilon}_{\tau
e_k}f_t'\right\rangle+\frac{1}{\beta\varepsilon }
\left\langle p_1^\perp\nabla^{T\mM,\beta,\varepsilon}_{\tau
e_m}f_s',\nabla^{T\mM,\beta,\varepsilon}_{\tau e_k}f_t'\right\rangle +
\frac{\varepsilon}{\beta  }\left\langle
p_2^\perp\nabla^{T\mM,\beta,\varepsilon}_{\tau e_m}f_s',\nabla^{T\mM,\beta,\varepsilon}_{\tau e_k}f_t'\right\rangle
$$
$$
+\frac{1}{\beta\varepsilon }\tau
e_m\left(\left\langle\nabla^{T\mM,\beta,\varepsilon}_{\tau e_k}f_s',
f_t'\right\rangle\right) -\frac{1}{\beta\varepsilon }\tau
e_k\left(\left\langle\nabla^{T\mM,\beta,\varepsilon}_{\tau e_m}f_s',
f_t'\right\rangle\right)-\frac{1}{\beta\varepsilon }\left\langle \nabla^{T\mM,\beta,\varepsilon }_{[\tau
e_m,\tau e_k]}f_s',  f_t'\right\rangle
+O_{\beta,\varepsilon}\left(|Z| \right)
$$
$$
=O\left(\frac{\varepsilon}{\beta}\right)+O_{\beta,\varepsilon}\left(|Z|
\right) .
$$

If   $q+1\leq  s,\,t\leq q+q_1$, one has, in view of (\ref{2.2}) and (\ref{12.2}),
\begin{multline}\label{13.14a}
\varepsilon^{2}\left \langle R^{T\mM,\beta, \varepsilon}(\tau
e_m,\tau e_k)\tau f_s,\tau f_t\right\rangle_{\beta,\varepsilon} =\varepsilon^{2}\left
\langle R^{T\mM,\beta,
\varepsilon}(f_s',  f_t')\tau e_m,\tau e_k\right\rangle+O_{\beta,\varepsilon}\left(|Z|\right)\\
=\varepsilon^{2}\left\langle \nabla^{T\mM,\beta,\varepsilon}_{f_s'}\nabla^{T\mM,\beta,\varepsilon}_{f_t'}\tau e_m,  \tau e_k\right\rangle
-\varepsilon^{2}\left\langle \nabla^{T\mM,\beta,\varepsilon}_{f_t'}\nabla^{T\mM,\beta,\varepsilon}_{f_s'}\tau e_m,  \tau e_k\right\rangle
\\
-\varepsilon^{2}\left\langle
\nabla^{T\mM,\beta,\varepsilon}_{[f_s',  f_t']}\tau e_m,\tau
e_k\right\rangle+O_{\beta,\varepsilon}\left(|Z| \right)
\\
 =-
{\varepsilon^{2}}{\beta^2}\left\langle
p\nabla^{T\mM,\beta,\varepsilon}_{f_t'}\tau
e_m,\nabla^{T\mM,\beta,\varepsilon}_{f_s'}\tau e_k\right\rangle-
\left\langle p_1^\perp\nabla^{T\mM,\beta,\varepsilon}_{f_t'}\tau
e_m,\nabla^{T\mM,\beta,\varepsilon}_{f_s'}\tau e_k\right\rangle
\\
-
 {\varepsilon }^{2}\left\langle
p_2^\perp\nabla^{T\mM,\beta,\varepsilon}_{f_t'}\tau
e_m,\nabla^{T\mM,\beta,\varepsilon}_{f_s'}\tau e_k\right\rangle
\\
+ {\varepsilon^{2}}{\beta^2}\left\langle
p\nabla^{T\mM,\beta,\varepsilon}_{f_s'}\tau
e_m,\nabla^{T\mM,\beta,\varepsilon}_{f_t'}\tau e_k\right\rangle +
\left\langle p_1^\perp\nabla^{T\mM,\beta,\varepsilon}_{f_s'}\tau
e_m,\nabla^{T\mM,\beta,\varepsilon}_{f_t'}\tau e_k\right\rangle
\\
+
 {\varepsilon }^{2}\left\langle
p_2^\perp\nabla^{T\mM,\beta,\varepsilon}_{f_s'}\tau
e_m,\nabla^{T\mM,\beta,\varepsilon}_{f_t'}\tau e_k\right\rangle
\\
+\varepsilon^{2}f_s'\left(\left\langle\nabla^{T\mM,\beta,\varepsilon}_{f_t'}\tau
e_m, \tau e_k\right\rangle\right)
-\varepsilon^{2}f_t'\left(\left\langle\nabla^{T\mM,\beta,\varepsilon}_{f_s'}\tau
e_m, \tau e_k\right\rangle\right)-\varepsilon^{2}\left\langle
\nabla^{T\mM ,\beta,\varepsilon}_{[f_s',f_t']}\tau e_m, \tau
e_k\right\rangle +O_{\beta,\varepsilon}\left(|Z| \right)\\
 =
\left\langle p_1^\perp\nabla^{ T\mM,\beta, \varepsilon}_{f_t'}\tau
e_k, \nabla^{ T\mM,\beta, \varepsilon}_{f_s'}\tau e_m
\right\rangle -\left\langle \nabla^{ T\mM,\beta,
\varepsilon}_{f_s'}\tau e_k,p_1^\perp \nabla^{ T\mM,\beta,
\varepsilon}_{f_t'}\tau e_m\right\rangle +O\left(
\frac{\varepsilon^2}{\beta^2}\right)+O_{\beta,\varepsilon}\left(|Z|
\right) .
\end{multline}

\comment{

From (\ref{13.14a}), one gets that for $q+1\leq  s,\,t\leq q+q_1$, one has
\begin{multline}\label{13.14f}
 \left \langle R^{\mF_1^\perp,\beta, \varepsilon}(\tau
e_m,\tau e_k)  f_s', f_t'\right\rangle = \left
\langle R^{T\mM,\beta,
\varepsilon}(\tau
e_m,\tau e_k)  f_s', f_t'\right\rangle \\
+{\beta^2} {\varepsilon^{2}}\left\langle
p\nabla^{T\mM,\beta,\varepsilon}_{ \tau e_m}f_s',\nabla^{T\mM,\beta,\varepsilon}_{\tau e_k}f_t'\right\rangle
+
 {\varepsilon }^{2}\left\langle
p_2^\perp\nabla^{T\mM,\beta,\varepsilon}_{ \tau e_m}f_s',\nabla^{T\mM,\beta,\varepsilon}_{ \tau e_k}f_t'\right\rangle\\
- {\beta^2}{\varepsilon^{2}}\left\langle
p\nabla^{T\mM,\beta,\varepsilon}_{ \tau e_k}f_s',\nabla^{T\mM,\beta,\varepsilon}_{ \tau e_m}f_t'\right\rangle-
 {\varepsilon }^{2}\left\langle
p_2^\perp\nabla^{T\mM,\beta,\varepsilon}_{ \tau e_k}f_s',\nabla^{T\mM,\beta,\varepsilon}_{ \tau e_m}f_t'\right\rangle\\
=  \left\langle p_1^\perp\nabla^{ T\mM,\beta, \varepsilon}_{f_t'}\tau e_k, \nabla^{ T\mM,\beta, \varepsilon}_{f_s'}\tau e_m \right\rangle -\left\langle \nabla^{ T\mM,\beta, \varepsilon}_{f_s'}\tau e_k,p_1^\perp \nabla^{ T\mM,\beta, \varepsilon}_{f_t'}\tau e_m\right\rangle +O\left( \frac{\varepsilon^2}{\beta^2}\right)+O_{\beta,\varepsilon}\left(|Z|
\right) .
\end{multline}

}

From (\ref{12.6m}), (\ref{12.3f}),  (\ref{13.1}), (\ref{157}), (\ref{159a}), (\ref{13.11b})-(\ref{13.14a}) and the obvious equality $\int_{-\infty}^{+\infty}z^2e^{-z^2}dz=\frac{1}{2}\int_{-\infty}^{+\infty}e^{-z^2}dz$, one gets that for $1\leq i,\,j\leq q$ with $i\neq j$, 
\begin{multline}\label{14.11a}
\int_\mM\widetilde{\tau}
f_i\left(f_{T }\right) f_T \left\langle c_{\beta,\varepsilon}\left(\widetilde{\tau}  f_i\right) c_{\beta,\varepsilon}\left(\widetilde{\tau}
f_j\right) \tau\sigma, \,^Q {\nabla} ^{\mF, \beta,\varepsilon}_{\widetilde{\tau}  f_j}(\tau
\sigma)-\tau\left(\left.\,^Q {\nabla} ^{\mF, \beta,\varepsilon}_{\widetilde{\tau}  f_j}(\tau
\sigma)\right|_{s(M)}\right)
\right\rangle dv_\mM
\\
=\frac{1}{8\beta^2} \sum_{s,\,t=q+1}^{q+q_1}\int_{s(M)}\left\langle c_{\beta,\varepsilon}\left(\beta^{-1}f_i\right)c_{\beta,\varepsilon}\left(\beta^{-1}f_j\right)\sigma,\left(
 \left\langle p_1^\perp\nabla^{ T\mM,\beta, \varepsilon}_{f_t}\left(\nabla^{\mF_2^\perp}_{f_j}Z\right) , \nabla^{ T\mM,\beta, \varepsilon}_{f_s}\left(\nabla^{\mF_2^\perp}_{f_i}Z\right)  \right\rangle \right.\right.
\\
\left.\left.-\left\langle \nabla^{ T\mM,\beta, \varepsilon}_{f_s}\left(\nabla^{\mF^\perp_2}_{f_j}Z\right) ,p_1^\perp \nabla^{ T\mM,\beta, \varepsilon}_{f_t}\left(\nabla^{\mF^\perp_2}_{f_i}Z\right) \right\rangle \right)c_{\beta,\varepsilon}(\varepsilon f_s)c_{\beta,\varepsilon}(\varepsilon f_t)\sigma
 \right\rangle dv_{s(M)}
\\
 + \left( 
O\left(\frac{\varepsilon}{\beta^4}\right)+O\left(
\frac{1}{\sqrt{T}}\right)\right)\int_{s(M)}|\sigma|^2dv_{s(M)}
+ 
\sum_{k=q+1}^{q+q_1}O\left( \frac{\varepsilon^2}{\beta^2}\right)\int_{s(M)} |\sigma|\cdot\left|
\,^Q\nabla^{\mF, \beta,\varepsilon}_{ f_k}(\tau\sigma)\right| dv_{s(M)}
\\
+O\left(\frac{1}{\sqrt{T}}\right)\int_{s(M)}
\sum_{k=1}^{q+q_1}\left|\, ^Q\nabla ^{\mF,
 \beta,\varepsilon}_{
f_k}(\tau \sigma)\right|^2dv_{s(M)}.
\end{multline}

Set
\begin{multline}\label{14.11b}
{\mathcal W}
= \sum_{i,\,j=1}^q\sum_{s,\,t=q+1}^{q+q_1}\int_{s(M)}\left\langle 
 \left(
 \left\langle p_1^\perp\nabla^{ T\mM,\beta, \varepsilon}_{f_t}\left(\nabla^{\mF_2^\perp}_{f_j}Z\right) , \nabla^{ T\mM,\beta, \varepsilon}_{f_s}\left(\nabla^{\mF_2^\perp}_{f_i}Z\right)  \right\rangle \right. \right.
\\
 \left.\left. -\left\langle \nabla^{ T\mM,\beta, \varepsilon}_{f_s}\left(\nabla^{\mF^\perp_2}_{f_j}Z\right) ,p_1^\perp \nabla^{ T\mM,\beta, \varepsilon}_{f_t}\left(\nabla^{\mF^\perp_2}_{f_i}Z\right) \right\rangle \right)\sigma,c_{\beta,\varepsilon}\left(\beta^{-1}f_i\right)c_{\beta,\varepsilon}\left(\beta^{-1}f_j\right)c_{\beta,\varepsilon}(\varepsilon f_s)c_{\beta,\varepsilon}(\varepsilon f_t)\sigma
 \right\rangle dv_{s(M)}.
\end{multline}

From (\ref{i1}), (\ref{3.13}),  (\ref{114}), (\ref{117}),  (\ref{127c}), (\ref{117f}), (\ref{117c}),  (\ref{14.11a}) and (\ref{14.11b}),
one finds, 
\begin{multline}\label{1451}
 I_1 
=-\frac{{\mathcal W}}{8\beta^2}
 + \left( 
O\left(\frac{\varepsilon}{\beta^4}\right)+O\left(
\frac{1}{\sqrt{T}}\right)\right)\int_{s(M)}|\sigma|^2dv_{s(M)}
\\
+ 
O\left( {\varepsilon^3}\right)\int_{s(M)} \sum_{k=q+1}^{q+q_1} \left|
\,^Q\nabla^{\mF, \beta,\varepsilon}_{ f_k}(\tau\sigma)\right|^2 dv_{s(M)}
+O\left(\frac{1}{\sqrt{T}}\right)\int_{s(M)}
\sum_{i=1}^{q+q_1}\left|\, ^Q\nabla ^{\mF,\beta,
 \varepsilon}_{
f_i}(\tau \sigma)\right|^2dv_{s(M)}.
 \end{multline}

\begin{lemma}\label{t3i} There exists $C_2>0$ such that the following formula holds on $s(M)$:
\begin{align}\label{145x}
\sum_{i=1}^q\sum_{t=q+1}^{q+q_1}\left|p_1^\perp \nabla^{ T\mM,\beta, \varepsilon}_{f_t}\left(\nabla^{\mF_2^\perp}_{f_i}Z\right)\right|^2\leq C_2.
\end{align}
\end{lemma}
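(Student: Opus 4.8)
The plan is to compute $\nabla^{\mF_2^\perp}_{f_i}Z$ along $s(M)$ explicitly and then read off the bound from the uniform estimate (\ref{f6}) in Lemma \ref{tf}. By (\ref{2.3}) the parallel transport $\tau e_j$ is taken with respect to $\nabla^{\mF_2^\perp,\beta,\varepsilon}=\nabla^{\mF_2^\perp}$, hence it does not depend on $\beta,\varepsilon$. Writing $Z=\sum_{j=1}^{q_2}z_j\tau e_j$ near $s(M)$ as in (\ref{145y}) and using $z_j|_{s(M)}=0$ together with $\tau e_j|_{s(M)}=e_j$, the terms $z_j\nabla^{\mF_2^\perp}_{f_i}(\tau e_j)$ drop out on $s(M)$, so that $\nabla^{\mF_2^\perp}_{f_i}Z=\sum_{j}(f_iz_j)\,e_j$ there. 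The coefficients $(f_iz_j)|_{s(M)}$ are smooth functions on the compact manifold $s(M)$ belonging to the fixed geometric data, hence bounded, uniformly in $\beta,\varepsilon,T$.

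Next I would apply $\nabla^{T\mM,\beta,\varepsilon}_{f_t}$ with $q+1\le t\le q+q_1$, so that $f_t\in\Gamma(\mF_1^\perp)$, and project by $p_1^\perp$. Since $\nabla^{\mF_2^\perp}_{f_i}Z$ takes values in $\mF_2^\perp$ and $p_1^\perp$ annihilates $\mF_2^\perp$, the derivatives of the coefficient functions disappear and one obtains on $s(M)$ the identity already recorded in (\ref{145y}), namely
\[
p_1^\perp\nabla^{T\mM,\beta,\varepsilon}_{f_t}\bigl(\nabla^{\mF_2^\perp}_{f_i}Z\bigr)=\sum_{j=1}^{q_2}\,(f_iz_j)\bigl|_{s(M)}\;p_1^\perp\nabla^{T\mM,\beta,\varepsilon}_{f_t}(\tau e_j).
\]
Here $f_t\in\Gamma(\mF_1^\perp)$ and $\tau e_j\in\Gamma(\mF_2^\perp)$, so $p_1^\perp\nabla^{T\mM,\beta,\varepsilon}_{f_t}(\tau e_j)$ is controlled precisely by the entry $\langle\nabla^{TM,\beta,\varepsilon}_{U_1}U_2,V_1\rangle=O(1)$ of (\ref{f6}); in particular it is bounded, on a fixed neighborhood of $s(M)$, uniformly in $0<\beta,\varepsilon\le 1$. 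Combining this with the bound on the coefficients and summing over the finitely many pairs $(i,t)$ then yields (\ref{145x}).

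The point worth emphasizing is why $p_1^\perp$ is essential: the full covariant derivative $\nabla^{T\mM,\beta,\varepsilon}_{f_t}(\tau e_j)$ also has a component along $\mF$, whose size by (\ref{f6}) is only $O(1/\beta^2)$, so without the projection the estimate would degenerate as $\beta\to 0$; the role of $p_1^\perp$ is exactly to discard that unbounded component and keep only the $O(1)$ piece living in $\mF_1^\perp$. Apart from this, the argument is entirely routine — the unwinding of definitions, the vanishing of $z_j$ on $s(M)$, and a single entry of Lemma \ref{tf}. The only bookkeeping that needs to be spelled out is that the resulting constant can be taken independent of $c$: since the $z_j$ are defined through $\psi_c$, one must check that the product $\sum_j(f_iz_j)|_{s(M)}\,p_1^\perp\nabla^{T\mM,\beta,\varepsilon}_{f_t}(\tau e_j)$ stays uniformly bounded as $c$ ranges over $(0,1]$, which follows from the explicit description of $\psi_c$ and of the radial parallel transport $\tau e_j$, both built only from the fixed fiberwise exponential maps.
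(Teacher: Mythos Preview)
Your argument is essentially the paper's own proof: it too simply cites (\ref{f6}) and (\ref{145y}) to obtain (\ref{145x}), and your write-up just unpacks these two ingredients explicitly. Your added remark on why the projection $p_1^\perp$ is essential (to discard the $O(1/\beta^2)$ component from (\ref{f6})) and your flagging of the $c$-dependence of $z_j$ are useful elaborations beyond the paper's one-line proof; note however that your justification of the uniformity in $c$ is only sketched and, since $f_i(z_j)|_{s(M)}$ a priori scales like $1/c$ through $\psi_c$, this is precisely the point that would deserve a fully explicit check.
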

\begin{proof}
From (\ref{f6}) and (\ref{12.6m}), one gets (\ref{145x}). 
\end{proof}

From (\ref{12.6m}), (\ref{12.3f}),  (\ref{13.1}), (\ref{157}), (\ref{160a}), (\ref{160c}),  (\ref{13.11b})-(\ref{13.14a}) and (\ref{145x}), one gets that for $q+1\leq i,\,j\leq q+q_1$ with $i\neq j$, 
\begin{multline}\label{14.11c}
\int_\mM\widetilde{\tau}
f_i\left(f_{T }\right) f_T \left\langle c_{\beta,\varepsilon}\left(\widetilde{\tau}  f_i\right) c_{\beta,\varepsilon}\left(\widetilde{\tau}
f_j\right) \tau\sigma, \,^Q {\nabla} ^{\mF, \beta,\varepsilon}_{\widetilde{\tau}  f_j}(\tau
\sigma)-\tau\left(\left.\,^Q {\nabla} ^{\mF, \beta,\varepsilon}_{\widetilde{\tau}  f_j}(\tau
\sigma)\right|_{s(M)}\right)
\right\rangle dv_\mM
\\
=  \left(
O\left(\frac{ \varepsilon}{\beta^2}\right)+O\left(
\frac{1}{\sqrt{T}}\right)\right)\int_{s(M)}|\sigma|^2dv_{s(M)}
+ 
\sum_{k=1}^{q}O\left( \frac{\varepsilon^2}{\beta^2}\right)\int_{s(M)} |\sigma|\cdot\left|
\,^Q\nabla^{\mF, \beta,\varepsilon}_{ f_k}(\tau\sigma)\right| dv_{s(M)}
\\
+ 
\sum_{k=q+1}^{q+q_1}O\left( {\varepsilon^2} \right)\int_{s(M)} |\sigma|\cdot\left|
\,^Q\nabla^{\mF, \beta,\varepsilon}_{ f_k}(\tau\sigma)\right| dv_{s(M)}
+O\left(\frac{1}{\sqrt{T}}\right)\int_{s(M)}
\sum_{k=1}^{q+q_1}\left|\, ^Q\nabla ^{\mF,
 \beta,\varepsilon}_{
f_k}(\tau \sigma)\right|^2dv_{s(M)}.
\end{multline}

From (\ref{i2}), (\ref{3.13}),  (\ref{114}), (\ref{117}),  (\ref{127g}), (\ref{117f}), (\ref{117c})   and  (\ref{14.11c}), one gets
\begin{multline}\label{1452}
I_2
=   \left(
O\left(\frac{\beta+\varepsilon}{\beta^2}\right)+O\left(
\frac{1}{\sqrt{T}}\right)\right)\int_{s(M)}|\sigma|^2dv_{s(M)}
\\
+
\left(O\left( \frac{\varepsilon^2}{\beta^2}\right)+O\left(\frac{1}{\sqrt{T}}\right)\right)  \int_{s(M)} \sum_{i=1}^{q} \left|
\,^Q\nabla^{\mF, \beta,\varepsilon}_{ f_i}(\tau\sigma)\right|^2 dv_{s(M)}
\\
+ 
\left(O\left( {\varepsilon^3} \right) + O\left(\frac{1}{\sqrt{T}}\right)\right)\int_{s(M)} \sum_{k=q+1}^{q+q_1}\left|
\,^Q\nabla^{\mF, \beta,\varepsilon}_{ f_k}(\tau\sigma)\right|^2 dv_{s(M)}
 .
\end{multline}

From (\ref{12.6m}), (\ref{12.3f}),  (\ref{13.1}), (\ref{157}), (\ref{159a}), (\ref{160a}), (\ref{160c}),  (\ref{13.11b})-(\ref{13.14a}), (\ref{145x}) and the   equality $\int_{-\infty}^{+\infty}z^2e^{-z^2}dz=\frac{1}{2}\int_{-\infty}^{+\infty}e^{-z^2}dz$, one gets that for $1\leq i\leq q$ and $q+1\leq j\leq q+q_1$, 
\begin{multline}\label{14.11d}
\int_\mM\widetilde{\tau}
f_i\left(f_{T }\right) f_T \left\langle c_{\beta,\varepsilon}\left(\widetilde{\tau}  f_i\right) c_{\beta,\varepsilon}\left(\widetilde{\tau}
f_j\right) \tau\sigma, \,^Q {\nabla} ^{\mF, \beta,\varepsilon}_{\widetilde{\tau}  f_j}(\tau
\sigma)-\tau\left(\left.\,^Q {\nabla} ^{\mF, \beta,\varepsilon}_{\widetilde{\tau}  f_j}(\tau
\sigma)\right|_{s(M)}\right)
\right\rangle dv_\mM
\\
= \int_{s(M)} \left(
O\left(\frac {\varepsilon}{\beta^3} \right)+O\left(
\frac{1}{\sqrt{T}}\right)\right) |\sigma|^2dv_{s(M)}
+ 
\sum_{k=1}^{q}O\left( \frac{\varepsilon}{\beta^3}\right)\int_{s(M)} |\sigma|\cdot\left|
\,^Q\nabla^{\mF, \beta,\varepsilon}_{ f_k}(\tau\sigma)\right| dv_{s(M)}
\\
+\frac{\varepsilon
}{2\beta} \int_{s(M)} \left\langle
c_{\beta,\varepsilon}\left(\beta^{-1}
f_i\right)c_{\beta,\varepsilon}\left(\varepsilon f_j\right)
\sigma,\,^Q{\nabla} ^{\mF,  \beta,\varepsilon}_{p_1^\perp
\nabla^{ T\mM,\beta,
\varepsilon}_{f_j}\left(\nabla^{\mF_2^\perp}_{f_i}Z\right)}(\tau
\sigma)\right\rangle  dv_{s(M)}
\\
+O\left(\frac{1}{\sqrt{T}}\right)\int_{s(M)}
\sum_{k=1}^{q+q_1}\left|\, ^Q\nabla ^{\mF,
 \beta,\varepsilon}_{
f_k}(\tau \sigma)\right|^2dv_{s(M)},
\end{multline}
while for $q+1\leq i\leq q+q_1$ and $1\leq j\leq q$, one has
\begin{multline}\label{14.11e}
\int_\mM\widetilde{\tau}
f_i\left(f_{T }\right) f_T \left\langle c_{\beta,\varepsilon}\left(\widetilde{\tau}  f_i\right) c_{\beta,\varepsilon}\left(\widetilde{\tau}
f_j\right) \tau\sigma, \,^Q {\nabla} ^{\mF, \beta,\varepsilon}_{\widetilde{\tau}  f_j}(\tau
\sigma)-\tau\left(\left.\,^Q {\nabla} ^{\mF, \beta,\varepsilon}_{\widetilde{\tau}  f_j}(\tau
\sigma)\right|_{s(M)}\right)
\right\rangle dv_\mM
\\
= \int_{s(M)} \left(
O\left(\frac {\varepsilon}{\beta^3} \right)+O\left(
\frac{1}{\sqrt{T}}\right)\right) |\sigma|^2dv_{s(M)}
+ 
\sum_{k=q+1}^{q+q_1}O\left( \frac{\varepsilon^3}{\beta}\right)\int_{s(M)} |\sigma|\cdot\left|
\,^Q\nabla^{\mF, \beta,\varepsilon}_{ f_k}(\tau\sigma)\right| dv_{s(M)}
\\
+O\left(\frac{1}{\sqrt{T}}\right)\int_{s(M)}
\sum_{k=1}^{q+q_1}\left|\, ^Q\nabla ^{\mF,
 \beta,\varepsilon}_{
f_k}(\tau \sigma)\right|^2dv_{s(M)}.
\end{multline}

From (\ref{i4}),  (\ref{3.13}),  (\ref{114}), (\ref{117}), (\ref{127f}),  (\ref{117f}),  (\ref{117c}),  (\ref{14.11d}) and  (\ref{14.11e}), one gets
\begin{multline}\label{1454a}
  I_4  
=
\frac{\varepsilon
}{\beta}\sum_{i=1}^q\sum_{t=q+1}^{q+q_1}\int_{s(M)}{\rm Re}\left(\left\langle
c_{\beta,\varepsilon}\left(\beta^{-1}
f_i\right)c_{\beta,\varepsilon}\left(\varepsilon f_t\right)
\sigma,\,^Q{\nabla} ^{\mF,  \beta,\varepsilon}_{p_1^\perp
\nabla^{ T\mM,\beta,
\varepsilon}_{f_t}\left(\nabla^{\mF_2^\perp}_{f_i}Z\right)}(\tau
\sigma)\right\rangle\right)  dv_{s(M)}
\\
+
O\left(\frac {\varepsilon}{\beta^4}\right)  \int_{s(M)}|\sigma|^2dv_{s(M)}
+ 
O\left( \frac{\varepsilon}{\beta^2}\right)\int_{s(M)} \sum_{i=1}^{q}\left|
\,^Q\nabla^{\mF, \beta,\varepsilon}_{ f_i}(\tau\sigma)\right|^2 dv_{s(M)}
\\
+ 
 O\left( {\varepsilon^3} \right) \int_{s(M)}\sum_{k=q+1}^{q+q_1}\left|
\,^Q\nabla^{\mF, \beta,\varepsilon}_{ f_k}(\tau\sigma)\right|^2 dv_{s(M)}
\\
+O\left(\frac{1}{\sqrt{T}}\right)\int_{s(M)} \left(
|\sigma|^2+\sum_{k=1}^{q+q_1}\left|
\,^Q\nabla^{\mF, \beta,\varepsilon}_{
f_k}(\tau\sigma)\right|^2\right) dv_{s(M)}.
\end{multline}


\subsection{Proof of Proposition \ref{t12.4}}\label{s2.8}

From (\ref{145})-(\ref{1456}), (\ref{1451}), (\ref{1452})  and  (\ref{1454a}), one has
\begin{multline}\label{145z}
\sum_{i=1}^6  I_i  
=-\frac{\mathcal W}{8\beta^2}+ O\left(\frac{1}{\beta}+\frac {\varepsilon}{\beta^4}\right)  \int_{s(M)}|\sigma|^2dv_{s(M)}+
O\left( \frac{\varepsilon}{\beta^2}\right)\int_{s(M)} \sum_{i=1}^{q}\left|
\,^Q\nabla^{\mF, \beta,\varepsilon}_{ f_i}(\tau\sigma)\right|^2 dv_{s(M)}
\\
+\frac{\varepsilon
}{\beta}\sum_{i=1}^q\sum_{t=q+1}^{q+q_1}\int_{s(M)}{\rm Re}\left(\left\langle
c_{\beta,\varepsilon}\left(\beta^{-1}
f_i\right)c_{\beta,\varepsilon}\left(\varepsilon f_t\right)
\sigma,\,^Q{\nabla} ^{\mF,  \beta,\varepsilon}_{p_1^\perp
\nabla^{ T\mM,\beta,
\varepsilon}_{f_t}\left(\nabla^{\mF_2^\perp}_{f_i}Z\right)}(\tau
\sigma)\right\rangle \right) dv_{s(M)}
\\
+ 
  O\left( {\varepsilon^3}  \right)\int_{s(M)}\sum_{k=q+1}^{q+q_1}\left|
\,^Q\nabla^{\mF, \beta,\varepsilon}_{ f_k}(\tau\sigma)\right|^2 dv_{s(M)}
\\
+O\left(\frac{1}{\sqrt{T}}\right)\int_{s(M)} \left(
|\sigma|^2+\sum_{k=1}^{q+q_1}\left|
\,^Q\nabla^{\mF, \beta,\varepsilon}_{
f_k}(\tau\sigma)\right|^2\right) dv_{s(M)}.
\end{multline}

From (\ref{m2}), (\ref{3.1}), (\ref{m55}), (\ref{3.8a}) and
(\ref{3.12}), one deduces that
\begin{multline}\label{167}
 \left\|p_{T,\beta,\varepsilon}D^{\mF, \beta,\varepsilon}
J_{T,\beta,\varepsilon}\sigma\right\|_0^2\geq
\left\langle\left(\frac{k^\mF}{4\beta^2}+O\left(\frac{1}{\beta}+\frac{\varepsilon^2}{\beta^2}\right)\right )J_{T,\beta,\varepsilon}\sigma
,J_{T,\beta,\varepsilon}\sigma\right\rangle
\\
+\frac{1}{\beta^2}\sum_{i=1}^q\left\|p_{T,\beta,\varepsilon}\nabla^{\mF, \beta,\varepsilon}_{\tau
f_i}J_{T,\beta,\varepsilon}\sigma\right\|^2_0
+\varepsilon^{2}\sum_{i=q+1}^{q+q_1}\left\|p_{T,\beta,\varepsilon}\nabla^{\mF, \beta,\varepsilon}_{\tau
f_i}J_{T,\beta,\varepsilon}\sigma\right\|^2_0
\\
+ \sum_{i=1}^{q_2}\left\|
\nabla^{\mF, \beta,\varepsilon}_{\tau
e_i}J_{T,\beta,\varepsilon}\sigma\right\|^2_0- \sum_{i=1}^{q_2}\left\|\left(1-p_{T,\beta,\varepsilon}\right)c_{\beta,\varepsilon}(\tau
e_i) \nabla^{\mF, \beta,\varepsilon}_{\tau
e_i}J_{T,\beta,\varepsilon}\sigma\right\|^2
 -\sum_{k=1}^6I_k.
\end{multline}

Clearly, for any $1\leq i\leq q_2$, one has
\begin{multline}\label{168}
   \left\|
\nabla^{\mF, \beta,\varepsilon}_{\tau
e_i}J_{T,\beta,\varepsilon}\sigma\right\|^2_0-
 \left\|\left(1-p_{T,\beta,\varepsilon}\right)c_{\beta,\varepsilon}(\tau e_i)
\nabla^{\mF, \beta,\varepsilon}_{\tau
e_i}J_{T,\beta,\varepsilon}\sigma\right\|^2_0 \\
\geq  \left\| \nabla^{\mF, \beta,\varepsilon}_{\tau
e_i}J_{T,\beta,\varepsilon}\sigma\right\|^2_0-
 \left\| c_{\beta,\varepsilon}(\tau e_i)
\nabla^{\mF, \beta,\varepsilon}_{\tau
e_i}J_{T,\beta,\varepsilon}\sigma\right\|^2_0=0.
\end{multline}

From (\ref{167}) and (\ref{168}), one gets
\begin{multline}\label{168a}
  \left\|p_{T,\beta,\varepsilon}D^{\mF, \beta,\varepsilon}
J_{T,\beta,\varepsilon}\sigma\right\|_0^2\geq
\left\langle\left(\frac{k^\mF}{4\beta^2}+O\left(\frac{1}{\beta}+\frac{\varepsilon^2}{\beta^2}\right)\right )J_{T,\beta,\varepsilon}\sigma
,J_{T,\beta,\varepsilon}\sigma\right\rangle
\\
+\frac{1}{\beta^2}\sum_{i=1}^q\left\|p_{T,\beta,\varepsilon}\nabla^{\mF, \beta,\varepsilon}_{\tau
f_i}J_{T,\beta,\varepsilon}\sigma\right\|^2_0
+\varepsilon^{2}\sum_{i=q+1}^{q+q_1}\left\|p_{T,\beta,\varepsilon}\nabla^{\mF, \beta,\varepsilon}_{\tau
f_i}J_{T,\beta,\varepsilon}\sigma\right\|^2_0
 -\sum_{k=1}^6I_k.
\end{multline}

Now since for any $U,\, V\in\Gamma(\mF_1^\perp)$, $W\in\Gamma(\mF_2^\perp)$, one has
\begin{align}\label{14.11f}
\left\langle\nabla^{T\mM,\beta,\varepsilon}_UW,V\right\rangle - \left\langle\nabla^{T\mM,\beta,\varepsilon}_VW,U\right\rangle=-\varepsilon^2\left\langle W,\nabla^{T\mM,\beta,\varepsilon}_UV-\nabla^{T\mM,\beta,\varepsilon}_VU\right\rangle  =-\varepsilon^2\left\langle W,[U,V]\right\rangle,
\end{align}
it is easy to verify that for $1\leq i\leq q$,  
\begin{multline}\label{168b}
\sum_{t=q+1}^{q+q_1} \int_{s(M)}{\rm Re}\left(\left\langle
c_{\beta,\varepsilon}\left(\beta^{-1}
f_i\right)c_{\beta,\varepsilon}\left(\varepsilon f_t\right)
\sigma,\,^Q{\nabla} ^{\mF,  \beta,\varepsilon}_{p_1^\perp
\nabla^{ T\mM,\beta,
\varepsilon}_{f_t}\left(\nabla^{\mF_2^\perp}_{f_i}Z\right)}(\tau
\sigma)\right\rangle \right) dv_{s(M)}
\\
=\sum_{t=q+1}^{q+q_1}\int_{s(M)}{\rm Re}\left(\left\langle
c_{\beta,\varepsilon}\left(\beta^{-1}
f_i\right)c_{\beta,\varepsilon}\left(\varepsilon p_1^\perp
\nabla^{ T\mM,\beta,
\varepsilon}_{f_t}\left(\nabla^{\mF_2^\perp}_{f_i}Z\right)\right)
\sigma,\,^Q{\nabla} ^{\mF,  \beta,\varepsilon}_{f_t}(\tau
\sigma)\right\rangle\right)  dv_{s(M)}
\\
+O\left(\varepsilon^2\right)
\sum_{t=q+1}^{q+q_1}
\int_{s(M)}\left|
\sigma\right|\cdot\left|^Q{\nabla} ^{\mF,  \beta,\varepsilon}_{f_t}(\tau
\sigma)\right|  dv_{s(M)}
\\
=\sum_{t=q+1}^{q+q_1}{\rm Re}\left(\left\langle J_{T,\beta,\varepsilon}
c_{\beta,\varepsilon}\left(\beta^{-1}
f_i\right)c_{\beta,\varepsilon}\left(\varepsilon p_1^\perp
\nabla^{ T\mM,\beta,
\varepsilon}_{f_t}\left(\nabla^{\mF_2^\perp}_{f_i}Z\right)\right)
\sigma,\,^Q{\nabla} ^{\mF,  \beta,\varepsilon}_{\tau f_t} J_{T,\beta,\varepsilon}
\sigma\right\rangle \right)
\\
+ O\left( \frac{\varepsilon^2}{\beta}\right)  \int_{s(M)}|\sigma|^2dv_{s(M)}
+ 
  O\left(\beta\, {\varepsilon^2}  \right)\int_{s(M)}\sum_{k=q+1}^{q+q_1}\left|
\,^Q\nabla^{\mF, \beta,\varepsilon}_{ f_k}(\tau\sigma)\right|^2 dv_{s(M)}
\\
+O\left(\frac{1}{\sqrt{T}}\right)\int_{s(M)} \left(
|\sigma|^2+\sum_{k=1}^{q+q_1}\left|
\,^Q\nabla^{\mF, \beta,\varepsilon}_{
f_k}(\tau\sigma)\right|^2\right) dv_{s(M)}.
\end{multline}

Also, by the obvious equality  $|a+b|^2=|a|^2+|b|^2+2\,{\rm Re}(\langle a,b\rangle)$, one has, for any $q+1\leq t\leq q+q_1$,
\begin{multline}\label{168c}
 \left\|\varepsilon \, p_{T,\beta,\varepsilon}\nabla^{\mF, \beta,\varepsilon}_{\tau
f_t}J_{T,\beta,\varepsilon}\sigma   -  \frac{1}{2\beta}\sum_{i=1}^q  J_{T,\beta,\varepsilon}
c_{\beta,\varepsilon}\left(\beta^{-1}
f_i\right)c_{\beta,\varepsilon}\left(\varepsilon p_1^\perp
\nabla^{ T\mM,\beta,
\varepsilon}_{f_t}\left(\nabla^{\mF_2^\perp}_{f_i}Z\right)\right)
\sigma      \right\|^2_0
\\
=\varepsilon^2  \left\|p_{T,\beta,\varepsilon}\nabla^{\mF, \beta,\varepsilon}_{\tau
f_t}J_{T,\beta,\varepsilon}\sigma  \right\|^2_0 +\frac{1}{4\beta^2} \left\|\sum_{i=1}^q c_{\beta,\varepsilon}\left(\beta^{-1}
f_i\right)c_{\beta,\varepsilon}\left(\varepsilon p_1^\perp
\nabla^{ T\mM,\beta,
\varepsilon}_{f_t}\left(\nabla^{\mF_2^\perp}_{f_i}Z\right)\right)
\sigma \right\|^2_0
\end{multline}
$$
-\frac{\varepsilon}{\beta}
\sum_{i=1}^q{\rm Re}\left(\left\langle J_{T,\beta,\varepsilon}
c_{\beta,\varepsilon}\left(\beta^{-1}
f_i\right)c_{\beta,\varepsilon}\left(\varepsilon p_1^\perp
\nabla^{ T\mM,\beta,
\varepsilon}_{f_t}\left(\nabla^{\mF_2^\perp}_{f_i}Z\right)\right)
\sigma,\,^Q{\nabla} ^{\mF,  \beta,\varepsilon}_{\tau f_t} J_{T,\beta,\varepsilon}
\sigma\right\rangle \right)
,
$$
with the following pointwise formula on $s(M)$, where again (\ref{14.11f}) is used,
\begin{multline}\label{168d}
\sum_{t=q+1}^{q+q_1}\left( \sum_{i=1}^q c_{\beta,\varepsilon}\left(\beta^{-1}
f_i\right)c_{\beta,\varepsilon}\left(\varepsilon p_1^\perp
\nabla^{ T\mM,\beta,
\varepsilon}_{f_t}\left(\nabla^{\mF_2^\perp}_{f_i}Z\right)\right)
 \right)^2=-\sum_{i=1}^q \sum_{t=q+1}^{q+q_1}\left|p_1^\perp
\nabla^{ T\mM,\beta,
\varepsilon}_{f_t}\left(\nabla^{\mF_2^\perp}_{f_i}Z\right) \right|^2
\\
-\frac{1}{2}\sum_{i,\,j=1}^q \sum_{t=q+1}^{q+q_1} c_{\beta,\varepsilon}\left(\beta^{-1}
f_i\right)c_{\beta,\varepsilon}\left(\beta^{-1}
f_j\right)\left( c_{\beta,\varepsilon}\left(\varepsilon p_1^\perp
\nabla^{ T\mM,\beta,
\varepsilon}_{f_t}\left(\nabla^{\mF_2^\perp}_{f_i}Z\right)\right)c_{\beta,\varepsilon}\left(\varepsilon p_1^\perp
\nabla^{ T\mM,\beta,
\varepsilon}_{f_t}\left(\nabla^{\mF_2^\perp}_{f_j}Z\right)\right)\right.
\end{multline}
$$
\left. -c_{\beta,\varepsilon}\left(\varepsilon p_1^\perp
\nabla^{ T\mM,\beta,
\varepsilon}_{f_t}\left(\nabla^{\mF_2^\perp}_{f_j}Z\right)\right)c_{\beta,\varepsilon}\left(\varepsilon p_1^\perp
\nabla^{ T\mM,\beta,
\varepsilon}_{f_t}\left(\nabla^{\mF_2^\perp}_{f_i}Z\right)\right)\right)
$$
$$
=-\sum_{i=1}^q \sum_{t=q+1}^{q+q_1}\left|p_1^\perp
\nabla^{ T\mM,\beta,
\varepsilon}_{f_t}\left(\nabla^{\mF_2^\perp}_{f_i}Z\right) \right|^2
- \frac{1}{2}\sum_{i,\,j=1}^q \sum_{s,\,t=q+1}^{q+q_1} c_{\beta,\varepsilon} \left( \beta^{-1}
f_i\right)c_{\beta,\varepsilon} \left( \beta^{-1}
f_j\right) c_{\beta,\varepsilon}(\varepsilon f_s)c_{\beta,\varepsilon}(\varepsilon f_t)
$$
$$
\left(\left\langle p_1^\perp
\nabla^{ T\mM,\beta,
\varepsilon}_{f_s}\left(\nabla^{\mF_2^\perp}_{f_i}Z\right),  
\nabla^{ T\mM,\beta,
\varepsilon}_{f_t}\left(\nabla^{\mF_2^\perp}_{f_j}Z\right)\right\rangle
-\left\langle p_1^\perp
\nabla^{ T\mM,\beta,
\varepsilon}_{f_s}\left(\nabla^{\mF_2^\perp}_{f_j}Z\right),  
\nabla^{ T\mM,\beta,
\varepsilon}_{f_t}\left(\nabla^{\mF_2^\perp}_{f_i}Z\right)\right\rangle\right)
$$
$$
+O\left(\varepsilon^2\right)
.
$$

From (\ref{14.11b}), (\ref{145z}),  (\ref{168a}) and (\ref{168b})-(\ref{168d}), we get 
\begin{multline}\label{168e}
  \left\|p_{T,\beta,\varepsilon}D^{\mF, \beta,\varepsilon}
J_{T,\beta,\varepsilon}\sigma\right\|_0^2\geq
\left\langle\left(\frac{k^\mF}{4\beta^2}+O\left(\frac{1}{\beta}+\frac{\varepsilon^2}{\beta^2}\right)\right )J_{T,\beta,\varepsilon}\sigma
,J_{T,\beta,\varepsilon}\sigma\right\rangle
\\
+\frac{1}{\beta^2}\sum_{i=1}^q\left\|p_{T,\beta,\varepsilon}\nabla^{\mF, \beta,\varepsilon}_{\tau
f_i}J_{T,\beta,\varepsilon}\sigma\right\|^2_0
\end{multline}
$$
+\sum_{t=q+1}^{q+q_1} \left\|\varepsilon\, p_{T,\beta,\varepsilon}\nabla^{\mF, \beta,\varepsilon}_{\tau
f_t}J_{T,\beta,\varepsilon}\sigma   -  \frac{1}{2\beta}\sum_{i=1}^q  J_{T,\beta,\varepsilon}
c_{\beta,\varepsilon}\left(\beta^{-1}
f_i\right)c_{\beta,\varepsilon}\left(\varepsilon p_1^\perp
\nabla^{ T\mM,\beta,
\varepsilon}_{f_t}\left(\nabla^{\mF_2^\perp}_{f_i}Z\right)\right)
\sigma      \right\|^2_0
$$
$$
-\frac{1}{4\beta^2}\sum_{i=1}^q \sum_{t=q+1}^{q+q_1}\int_{s(M)}\left|p_1^\perp
\nabla^{ T\mM,\beta,
\varepsilon}_{f_t}\left(\nabla^{\mF_2^\perp}_{f_i}Z\right) \right|^2\cdot |\sigma|^2dv_{s(M)}
$$
$$
+ O\left(\frac{1}{\beta}+\frac {\varepsilon}{\beta^4}\right)  \int_{s(M)}|\sigma|^2dv_{s(M)}+
O\left( \frac{\varepsilon}{\beta^2}\right)\int_{s(M)} \sum_{i=1}^{q}\left|
\,^Q\nabla^{\mF, \beta,\varepsilon}_{ f_i}(\tau\sigma)\right|^2 dv_{s(M)}
$$
$$
+ 
  O\left( {\varepsilon^3}  \right)\int_{s(M)}\sum_{k=q+1}^{q+q_1}\left|
\,^Q\nabla^{\mF, \beta,\varepsilon}_{ f_k}(\tau\sigma)\right|^2 dv_{s(M)}
$$
$$
+O\left(\frac{1}{\sqrt{T}}\right)\int_{s(M)} \left(
|\sigma|^2+\sum_{k=1}^{q+q_1}\left|
\,^Q\nabla^{\mF, \beta,\varepsilon}_{
f_k}(\tau\sigma)\right|^2\right) dv_{s(M)}.
$$

For $1\leq i\leq q+q_1$, by (\ref{m22}) and
(\ref{3.9})-(\ref{11.1}), one has,
\begin{multline}\label{170}
p_{T,\beta,\varepsilon}\nabla^{\mF, \beta,\varepsilon}_{\tau
f_i}J_{T,\beta,\varepsilon}\sigma=p_{T,\beta,\varepsilon}\left(\tau f_i
\left(f_{T }\right)\tau\sigma
+f_{T }\nabla^{\mF, \beta,\varepsilon}_{\tau f_i}(\tau\sigma)\right)
 \\
= \left(\int_{\mM_x} f_{T }\tau
f_i\left(f_{T }\right)k\,dv_{\mM_x}\right)J_{T,\beta,\varepsilon}\sigma+
 p_{T,\beta,\varepsilon} \left(f_{T }Q \nabla^{\mF, \beta,\varepsilon}_{\tau f_i}(\tau \sigma)\right) .
\end{multline}

From (\ref{11.2}) and Lemma \ref{t11.4}, one deduces that the
following formula holds for any $1\leq i\leq q+q_1$,
\begin{multline}\label{171}
\left\|p_{T,\beta,\varepsilon} \left(f_{T }Q
\nabla^{\mF, \beta,\varepsilon}_{\tau f_i}(\tau
\sigma)\right)\right\|_0^2=\int_{s(M)}\left|\,
^Q\nabla^{\mF, \beta,\varepsilon}_{  f_i}(\tau \sigma
)\right|^2d{v}_{s(M)}\\
+O\left(\frac{1}{\sqrt{T}}\right)\int_{s(M)}|\sigma|^2d{v}_{s(M)} +O\left(\frac{1}{\sqrt{T}}\right)
\sum_{j=1}^{q+q_1}\int_{s(M)}\left|\,
^Q\nabla^{\mF, \beta,\varepsilon}_{  f_j}(\tau \sigma )\right|^2d{v}_{s(M)}.
\end{multline}

If $1\leq i\leq q$, by (\ref{12.1}) and (\ref{119}), one gets
\begin{align}\label{172}
\int_{\mM_x} f_{T }\tau
f_i\left(f_{T }\right)k\,dv_{\mM_x}=
O\left(1\right)+O\left(\frac{1}{\sqrt{T}}\right).
\end{align}

If $q+1\leq i\leq q+q_1$, by (\ref{12.2}) and (\ref{119}),  one
gets
\begin{align}\label{173}
 \int_{\mM_x} f_{T }\tau
f_i\left(f_{T }\right)k\,dv_{\mM_x}=
O\left( {\frac{1 }{\beta^2}}\right)+O\left(\frac{1}{\sqrt{T}}\right).
\end{align}

Recall the following obvious inequality,
\begin{align}\label{173a}
 |a+b|^2\geq \frac{|a|^2}{2}-|b|^2.
\end{align}

 By (\ref{145x}) and
(\ref{170})-(\ref{173a}), one gets that for   $0<\delta\leq 1$
sufficiently small,
\begin{multline}\label{174}
\frac{1}{\beta^2} \sum_{i=1}^q\left\|p_{T,\beta,\varepsilon}\nabla^{\mF, \beta,\varepsilon}_{\tau
f_i}J_{T,\beta,\varepsilon}\sigma\right\|^2_0
\\
+ \sum_{i=q+1}^{q+q_1} \left\|\varepsilon \, p_{T,\beta,\varepsilon}\nabla^{\mF, \beta,\varepsilon}_{\tau
f_t}J_{T,\beta,\varepsilon}\sigma   -  \frac{1}{2\beta}\sum_{i=1}^q  J_{T,\beta,\varepsilon}
c_{\beta,\varepsilon}\left(\beta^{-1}
f_i\right)c_{\beta,\varepsilon}\left(\varepsilon p_1^\perp
\nabla^{ T\mM,\beta,
\varepsilon}_{f_t}\left(\nabla^{\mF_2^\perp}_{f_i}Z\right)\right)
\sigma      \right\|^2_0
\\
\geq
 \sum_{i=1}^q\frac{\varepsilon^{\delta}}{\beta^2}\left\|p_{T,\beta,\varepsilon}\nabla^{\mF, \beta,\varepsilon}_{\tau
f_i}J_{T,\beta,\varepsilon}\sigma\right\|^2_0 
\\
+ \varepsilon^{\delta
} \sum_{i=q+1}^{q+q_1} \left\|\varepsilon\, p_{T,\beta,\varepsilon}\nabla^{\mF, \beta,\varepsilon}_{\tau
f_t}J_{T,\beta,\varepsilon}\sigma   -  \frac{1}{2\beta}\sum_{i=1}^q  J_{T,\beta,\varepsilon}
c_{\beta,\varepsilon}\left(\beta^{-1}
f_i\right)c_{\beta,\varepsilon}\left(\varepsilon p_1^\perp
\nabla^{ T\mM,\beta,
\varepsilon}_{f_t}\left(\nabla^{\mF_2^\perp}_{f_i}Z\right)\right)
\sigma      \right\|^2_0 \\
\geq
\int_{s(M)}\left(O\left( \frac{\varepsilon^{\delta}}{\beta^4}\right) +O\left(\frac{1}{\sqrt{T}}\right)\right)|\sigma|^2dv_{s(M)}
+\frac{\varepsilon^{\delta}}{4\beta^2}\sum_{i=1}^q\int_{s(M)}\left|^Q\nabla^{\mF, \beta,\varepsilon}_{  f_i}(\tau
\sigma)\right|^2dv_{s(M)}\\ +\frac{\varepsilon^{2+\delta
}}{8}\sum_{i=q+1}^{q+q_1}\int_{s(M)}\left|^Q\nabla^{\mF, \beta,\varepsilon}_{  f_i}
(\tau\sigma)\right|^2dv_{s(M)}
+O\left(\frac{1}{\sqrt{T}}\right)\sum_{i=1}^{q+q_1}\int_{s(M)}\left|^Q\nabla^{\mF, \beta,\varepsilon}_{  f_i}
(\tau\sigma)\right|^2dv_{s(M)}.
\end{multline}

From (\ref{168e})  and (\ref{174}),  one deduces that
\begin{multline}\label{175}
 \left\|p_{T,\beta,\varepsilon}D^{\mF, \beta,\varepsilon}
J_{T,\beta,\varepsilon}\sigma\right\|_0^2
\geq
\int_{s(M)} \left(\frac{k^\mF}{4\beta^2}- 
\frac{1}{4\beta^2}\sum_{i=1}^q \sum_{t=q+1}^{q+q_1} \left|p_1^\perp
\nabla^{ T\mM,\beta,
\varepsilon}_{f_t}\left(\nabla^{\mF_2^\perp}_{f_i}Z\right) \right|^2\right) |\sigma|^2dv_{s(M)}
\\
+O\left(\frac{1}{\beta}+ \frac{\varepsilon^\delta }{\beta^4}\right)   \int_{s(M)}|\sigma|^2dv_{s(M)}+
 \left(\frac{\varepsilon^\delta }{4\beta^2}+O\left(\frac{\varepsilon  }{\beta^2}\right)\right) \sum_{k=1}^q\int_{s(M)}  \left|
\,^Q\nabla^{\mF, \beta,\varepsilon}_{ f_k}(\tau\sigma)\right|^2 dv_{s(M)}
\\
+\left(\frac{\varepsilon^{2+\delta} }{8 } +O\left( {\varepsilon^3  }{ }\right)\right)\sum_{k=q+1}^{q+q_1}\int_{s(M)}  \left|
\,^Q\nabla^{\mF, \beta,\varepsilon}_{ f_k}(\tau\sigma)\right|^2 dv_{s(M)}
\\
+O\left(\frac{1}{\sqrt{T}}\right)\int_{s(M)} \left(
|\sigma|^2+\sum_{k=1}^{q+q_1}\left|
\,^Q\nabla^{\mF, \beta,\varepsilon}_{
f_k}(\tau\sigma)\right|^2\right) dv_{s(M)}.
\end{multline}

From  (\ref{175}),  one gets (\ref{12.6}).

The proof of Proposition \ref{t12.4} is completed.

\subsection{Proof of Theorem \ref{t0.2}}\label{s2.9}

 Since the metric $g^\mF=\pi^*g^F$ is lifted from $g^F$, for any $x\in \mM$, one has
\begin{align}\label{175a}
k^\mF(x)=k^F(\pi(x)).
\end{align}

\begin{lemma}\label{t175} For any $X\in\Gamma(s_*F)$, $U,\,V\in \Gamma(\mF_1^\perp|_{s(M)})$, one has,
\begin{align}\label{175b}
\left\langle\nabla^{T\mM,\beta,\varepsilon}_UX,V\right\rangle_{g^{\mF_1^\perp}}=\frac{\left\langle\omega\left(\pi_*X\right)\pi_*U,\pi_*V \right\rangle_{g^{F^\perp}}}{2}+ O\left(\varepsilon^2 \right) .
\end{align}
\end{lemma}

\begin{proof} Without loss of generality, we assume that $s_*F^\perp=\mF^\perp_1|_{s(M)}$. Then $s_*F\subseteq (\mF\oplus\mF_2^\perp)|_{s(M)}$ is orthogonal to $s_*F^\perp$ with respect to $g^{T\mM}|_{s(M)}$.

We first fix a $\beta_0>0$ and compute by using (\ref{1.7})  that
\begin{align}\label{175d}
\left\langle\nabla^{T\mM,\beta,\varepsilon}_UX,V\right\rangle =  \left\langle\nabla^{T\mM,\beta_0,\varepsilon}_UX,V\right\rangle  +O\left( \varepsilon^2 \right)  .
\end{align}

Let $g^{TM}_{\beta_0,\varepsilon}=s^*g^{T\mM}_{\beta_0,\varepsilon}$ be the induced metric on $TM$. Then one has
$
g^{TM}_{\beta_0,\varepsilon} =g^F_{\beta_0}+\frac{g^{F^\perp}}{\varepsilon^2}  , $ with $g^F_{\beta_0}$ does not depend on $\varepsilon$. 
Let $\nabla^{TM,\beta_0,\varepsilon}$ denote the associated Levi-Civita connection. Then one has (cf. (\ref{a1.4}))
\begin{align}\label{175f}
  \left\langle\nabla^{T\mM,\beta_0,\varepsilon}_UX,V\right\rangle  =  \left\langle\nabla^{TM,\beta_0,\varepsilon}_{\pi_*U}\pi_*X,\pi_*V\right\rangle =  \frac{\left\langle\omega\left(\pi_*X\right)\pi_*U,\pi_*V \right\rangle }{2}+ O\left(\varepsilon^2 \right)  .
\end{align}

From (\ref{175d}) and (\ref{175f}), one gets (\ref{175b}). 
\end{proof}

Now let $\widehat f\in \Gamma(F)$, $U\in \Gamma(\mF_1^\perp|_{s(M)})$. Denote $f=(\pi^*\widehat f)|_{s(M)}\in\Gamma(\mF|_{s(M)})$. Then one has on $s(M)$ that
\begin{align}\label{175g}
f=\left(f-s_*\widehat f \right)+s_*\widehat f  ,
\end{align}
with $f-s_*\widehat f \in\Gamma(\mF_2^\perp|_{s(M)})$, as $\pi_*(f-s_*\widehat f )=\widehat f-\widehat f=0$.

Thus, as $Z\equiv 0$ on $s(M)$  (cf. (\ref{12.6mb})),   the following identity holds on $s(M)$,
\begin{align}\label{175h}
 \nabla^{\mF_2^\perp}_fZ=\nabla^{\mF_2^\perp}_{f-s_*\widehat f}Z= f-s_*\widehat f.
\end{align}

From (\ref{f4}), (\ref{175b})  and (\ref{175h}), one finds
\begin{multline}\label{175i}
\pi_*\left(\left. \left( p_1^\perp  \nabla^{T\mM,\beta,\varepsilon}_U\left( \nabla^{\mF_2^\perp}_fZ\right)\right) \right|_{s(M)}\right)
=\pi_*\left(   p_1^\perp  \nabla^{T\mM,\beta,\varepsilon}_U\left(  f -s_*\widehat f\right)\right)  
\\
=-\frac{1}{2} \omega\left(\widehat f\right)\pi_*U +O\left(\varepsilon^2\right) .
\end{multline}

Let $\widehat f_1,\  \,\cdots,\,\widehat f_q$ be an orthonormal basis of $(F,g^F)$; $h_1,\,\cdots,\,h_{q_1}$  an orthornormal basis of $(F^\perp,g^{F^\perp})$. 

By (\ref{175a}) and (\ref{175i}), (\ref{12.6}) in Proposition \ref{t12.4} now takes the form
\begin{multline}\label{175j}
 \left\|p_{T,\beta,\varepsilon}D^{\mF, \beta,\varepsilon}
J_{T,\beta,\varepsilon}\sigma\right\|_0^2
\geq
 \int_{s(M)} \left(\frac{k^F }{4\beta^2}-
\frac{1}{16\beta^2}\sum_{i=1}^q \sum_{s=1}^{q_1} \left| \omega\left(\widehat f_i\right)h_s \right|^2\right)|\sigma|^2dv_{s(M)}
\\
- C'\left(\frac{1}{\beta}+\frac{\varepsilon^\delta}{\beta^4} \right)  \int_{s(M)}|\sigma|^2dv_{s(M)}
+  \frac{\varepsilon^{\delta} }{8\beta^2} \sum_{k=1}^q\int_{s(M)}  \left|
\,^Q\nabla^{\mF, \beta,\varepsilon}_{ f_k}(\tau\sigma)\right|^2 dv_{s(M)}
\\
+ \frac{\varepsilon^{2+\delta} }{16 } \sum_{k=q+1}^{q+q_1}\int_{s(M)}  \left|
\,^Q\nabla^{\mF, \beta,\varepsilon}_{ f_k}(\tau\sigma)\right|^2 dv_{s(M)}
-\frac{C_{\beta,\varepsilon}}{\sqrt{T}} \int_{s(M)} \left(
|\sigma|^2+\sum_{k=1}^{q+q_1}\left|
\,^Q\nabla^{\mF, \beta,\varepsilon}_{
f_k}(\tau\sigma)\right|^2\right) dv_{s(M)}.
\end{multline} 

Let $\widehat D_{s(M)}^{ \beta,\varepsilon}:\Gamma((S(\mF\oplus \mF_1^\perp))|_{s(M)})  \rightarrow \Gamma((S(\mF\oplus \mF_1^\perp))|_{s(M)}) $ be the limit operator 
\begin{align}\label{175ja}
 \widehat D_{s(M)}^{ \beta,\varepsilon} =\lim_{T\rightarrow +\infty} J_{T,\beta,\varepsilon}^{-1}p_{T,\beta,\varepsilon}D^{\mF, \beta,\varepsilon}
J_{T,\beta,\varepsilon} .
\end{align}

The existence of the limit   is clear. Also, 
one verifies easily that $\widehat D_{s(M)}^{ \beta,\varepsilon}$ is a formally self-adjoint (with respect to the inner product in (\ref{k2}))
 Dirac type operator.\footnote{Since in general $(\mF\oplus\mF_1^\perp)|_{s(M)}\neq Ts(M)$ geometrically, here by a Dirac type operator we mean   that its symbol is homotopic, through invertible elements,  to that of a standard Dirac operator.} homotopic through a family of  Moreover,  for any $\sigma\in \Gamma((S(\mF\oplus \mF_1^\perp))|_{s(M)})$, one has by   (\ref{175j}) that 
\begin{multline}\label{175k}
 \left\| \widehat D_{s(M)}^{ \beta,\varepsilon} \sigma\right\|_0^2
\geq
 \int_{s(M)} \left(\frac{k^F }{4\beta^2}-
\frac{1}{16\beta^2}\sum_{i=1}^q \sum_{s=1}^{q_1} \left| \omega\left(\widehat f_i\right)h_s \right|^2\right)|\sigma|^2dv_{s(M)}
\\
- C'\left(\frac{1}{\beta}+\frac{\varepsilon^\delta}{\beta^4} \right)  \int_{s(M)}|\sigma|^2dv_{s(M)}
+  \frac{\varepsilon^{\delta} }{8\beta^2} \sum_{k=1}^q\int_{s(M)}  \left|
\,^Q\nabla^{\mF, \beta,\varepsilon}_{\pi^*\widehat f_k} \sigma\right|^2 dv_{s(M)}
\\
+ \frac{\varepsilon^{2+\delta} }{16 } \sum_{t=1}^{q_1}\int_{s(M)}  \left|
\,^Q\nabla^{\mF, \beta,\varepsilon}_{\pi^*h_t} \sigma\right|^2 dv_{s(M)}
 .
\end{multline} 

Theorem \ref{t0.2}  follows from (\ref{175k}) easily. 

\begin{rem}\label{t175aa}
 The above proof assumes that $F$ and $F^\perp\simeq TM/F$ are oriented, which is needed in the construction of the Connes fibration. When $F^\perp$ is not orientable,  one can pass to the double covering of $M$, with respect to $w_1(TM/F)$ (the first Stiefel-Whitney class of $TM/F$), to complete the proof. 
\end{rem}


For a more concrete form of $\widehat D_{s(M)}^{ \beta,\varepsilon}$, let $\widetilde D_{s(M)}^{\beta,\varepsilon}:\Gamma((S(\mF\oplus \mF_1^\perp))|_{s(M)})  \rightarrow \Gamma((S(\mF\oplus \mF_1^\perp))|_{s(M)}) $ be defined by that for any $\sigma\in  \Gamma((S(\mF\oplus \mF_1^\perp))|_{s(M)}) $, 
\begin{align}\label{175m}
 \widetilde D_{s(M)}^{\beta,\varepsilon} \sigma =\left.\left(\frac{1}{\beta}\sum_{i=1}^qc_{\beta,\varepsilon} \left(\beta^{-1}\pi^*\widehat f_i\right){ ^Q\nabla}^{\mF,\beta,\varepsilon}_{\pi^*\widehat f_i} (\tau\sigma)+\varepsilon\sum_{t=1}^{q_1}c_{\beta,\varepsilon}\left(\varepsilon\,\pi^*h_t\right) {^Q\nabla}^{\mF,\beta,\varepsilon}_{\pi^*  h_t} (\tau\sigma)\right)\right|_{s(M)} ,
\end{align}
which by (\ref{12.24}) could be written as 
\begin{align}\label{175ma}
 \widetilde D_{s(M)}^{\beta,\varepsilon} = \frac{1}{\beta}\sum_{i=1}^qc_{\beta,\varepsilon}\left(\beta^{-1}\pi^*\widehat f_i\right){ ^Q\nabla}^{\mF,\beta,\varepsilon}_{s_*\widehat f_i}  +\varepsilon\sum_{t=1}^{q_1}c_{\beta,\varepsilon}\left(\varepsilon\,\pi^*h_t\right) {^Q\nabla}^{\mF,\beta,\varepsilon}_{s_*  h_t}  ,
\end{align}
which is clearly of Dirac type.

By (\ref{m22}), (\ref{11.2c}), (\ref{11.3}), (\ref{12.30}) and (\ref{175ja}), one sees directly that there exists $Y _{\beta,\varepsilon}\in\Gamma((\mF\oplus\mF_1^\perp)|_{s(M)})$ such that
\begin{align}\label{181}
 \widehat D_{s(M)}^{ \beta,\varepsilon} =
  \widetilde D_{s(M)}^{\beta,\varepsilon} +c_{\beta,\varepsilon}\left(Y_{\beta,\varepsilon}\right)   .
\end{align}

Let $(\widetilde D_{s(M)}^{\beta,\varepsilon})^*$ be the formal adjoint of $\widetilde D_{s(M)}^{\beta,\varepsilon}$ with respect to the inner product in (\ref{k2}).   

From (\ref{181}), one gets

\begin{thm}\label{t14.11} The following identity holds,
\begin{align}\label{175n}
 \widehat D_{s(M)}^{ \beta,\varepsilon} 
=\frac{1}{2}\left( \widetilde D_{s(M)}^{\beta,\varepsilon}  + \left (\widetilde D_{s(M)}^{\beta,\varepsilon}\right)^*\right).
\end{align}
\end{thm}

\appendix

\section{Adiabatic limit and the scalar curvature on a foliation}\label{sA}

In this Appendix, we summarise the computation of the adiabatic limit of the scalar curvature on an arbitrary  foliation carried out in \cite{LZ01} and \cite{LW09}.

Let $(M,F)$ be a  foliated manifold.  We take the orthogonal splitting as in (\ref{14.0.1}).  Let $p^\perp:TM\rightarrow F^\perp$ be the corresponding orthogonal projection. 

For any $\varepsilon>0$, 
let $g_\varepsilon^{TM}$ be the Riemannian metric on $TM$ such that
\begin{align}\label{a1.1}
  g^{TM}_\varepsilon=g^F\oplus \frac{g^{F^\perp}}{\varepsilon^2}. 
\end{align}
Let $\nabla^{TM,\varepsilon}$ be the associated Levi-Civita connection.

\comment{

For any $X\in\Gamma(F)$, let $\omega(X)\in{\rm End}(F^\perp)$ be defined as in (\ref{14.0.1b}). 

For $X\in\Gamma(F)$, $U,\ V\in\Gamma(F^\perp)$, set
\begin{align}\label{a1.3}
\langle \omega(X)U,V\rangle=X\langle U,V\rangle -\langle [X,U],V\rangle-\langle U,[X,V]\rangle.
\end{align}
It is clear that  $\langle\omega(X)U,V\rangle$ does not depend on the rescaling of $g^{F^\perp}$, if one rescals $U, \ V$ accordingly like unit vectors. 

}

For any $X\in\Gamma(F)$, let $\omega(X)\in\Gamma({\rm End}(F^\perp))$ be defined as in (\ref{14.0.1b}). 
Then for any $U\in\Gamma(F^\perp)$ one has (cf. \cite[(1.7) and (1.13)]{LZ01} and \cite[(2.6)]{LW09})
\begin{align}\label{a1.4}
\frac{1}{2} \omega(X)U=\lim_{\varepsilon\rightarrow 0}p^\perp \nabla^{TM,\varepsilon}_UX.
\end{align}

Let $f_1,\  \,\cdots,\,f_q$ be an orthonormal basis of $(F,g^F)$; $h_1,\,\cdots,\,h_{q_1}$  an orthornormal basis of $(F^\perp,g^{F^\perp})$. In what follows, we assume $X,\ Y$ are    of $f_i$'s, while $U,\ V$ are of $h_s$'s.

Set
\begin{align}\label{a1.5}
 |\omega(X)U|^2= \sum_{s=1}^{q_1}   |\langle \omega(X)U,h_s\rangle|^2,\ \ \  \  |\omega(X)|^2= \sum_{s,\, t=1}^{q_1}   |\langle \omega(X)h_t,h_s\rangle|^2
= \sum_{s=1}^{q_1}   |  \omega(X)h_s|^2.
\end{align}

It is easy to verify that 
\begin{align}\label{a1.6}
\left \langle R^{TM,\varepsilon}(X,Y)X,Y\right\rangle =\left \langle R^{F}(X,Y)X,Y\right\rangle +O\left(\varepsilon^2\right). 
\end{align}

Also, by (\ref{a1.4}), one has
\begin{multline}\label{a1.7}
\varepsilon^2 \left\langle R^{TM,\varepsilon}(U,V)U,V\right\rangle = \varepsilon^2\left(\langle  \nabla^{TM,\varepsilon}_U \nabla^{TM,\varepsilon}_VU,V\rangle - \langle  \nabla^{TM,\varepsilon}_V \nabla^{TM,\varepsilon}_UU,V\rangle-\langle  \nabla^{TM,\varepsilon}_{[U,V]}  U,V\rangle\right)
\\
=-\varepsilon^4\left\langle \nabla^{TM,\varepsilon}_VU, p\nabla^{TM,\varepsilon}_UV\right\rangle + \varepsilon^4\left\langle \nabla^{TM,\varepsilon}_UU, p\nabla^{TM,\varepsilon}_VV\right\rangle +O\left(\varepsilon^2\right)
\\
=-\frac{1}{4} \sum_{i=1}^q|\langle\omega(f_i)U,V\rangle|^2+\frac{1}{4} \sum_{i=1}^q\langle\omega(f_i)U,U\rangle \langle\omega(f_i)V,V\rangle +O\left(\varepsilon^2\right),
\end{multline}
and
\begin{multline}\label{a1.8}
 \left \langle R^{TM,\varepsilon}(X,U)X,U\right\rangle =\left  \langle \nabla^{TM,\varepsilon}_X\left(p+p^\perp\right) \nabla^{TM,\varepsilon}_UX,U\right\rangle -\left  \langle \nabla^{TM,\varepsilon}_U\left(p+p^\perp\right) \nabla^{TM,\varepsilon}_XX,U\right\rangle
\end{multline}
$$
-\left \langle \nabla^{TM,\varepsilon}_{[X,U] }X,U\right\rangle
$$
$$
= \left \langle \nabla^{TM,\varepsilon}_X p^\perp\nabla^{TM,\varepsilon}_UX,U\right\rangle -\left \langle \nabla^{TM,\varepsilon}_Up \nabla^{TM }_XX,U\right\rangle
-\left \langle \nabla^{TM,\varepsilon}_{p^\perp[X,U] }X,U\right\rangle+O\left(\varepsilon^2\right)
$$
$$
= \left\langle \nabla^{TM,\varepsilon}_X p^\perp\nabla^{TM,\varepsilon}_UX,U\right\rangle 
-\frac{1}{2}\left\langle  \omega(X)U,p^\perp[X,U]\right\rangle-\frac{1}{2} \left\langle\omega\left (p \nabla^{TM }_XX\right)U,U\right\rangle+O\left(\varepsilon^2\right)
$$
$$
=\left\langle \nabla^{TM,\varepsilon}_X p^\perp\nabla^{TM,\varepsilon}_UX,U\right\rangle 
-\frac{1}{2}\left\langle  \omega(X)U,p^\perp\nabla^{TM,\varepsilon}_XU\right\rangle+\frac{1}{4} | \omega(X)U |^2\\
 -\frac{1}{2} \left\langle\omega\left (p \nabla^{TM }_XX\right)U,U\right\rangle+O\left(\varepsilon^2\right)
$$
$$
=X\left\langle p^\perp\nabla^{TM,\varepsilon}_UX,U\right\rangle-\left\langle p^\perp\nabla^{TM,\varepsilon}_UX,\nabla^{TM,\varepsilon}_XU\right\rangle 
-\frac{1}{2}\left\langle  \omega(X)U,p^\perp\nabla^{TM,\varepsilon}_XU\right\rangle
$$
$$
+\frac{1}{4} | \omega(X)U |^2 -\frac{1}{2} \left\langle\omega\left (p \nabla^{TM }_XX\right)U,U\right\rangle+O\left(\varepsilon^2\right)
$$
$$
=\frac{1}{2} X\left\langle \omega(X)U,U\right\rangle- \left\langle  \omega(X)U,p^\perp\nabla^{TM,\varepsilon}_XU\right\rangle
+\frac{1}{4} | \omega(X)U |^2 -\frac{1}{2} \left\langle\omega\left (p \nabla^{TM }_XX\right)U,U\right\rangle+O\left(\varepsilon^2\right)
$$
$$
=\frac{1}{2}X\left\langle \omega(X)U,U\right\rangle- \left\langle  \omega(X)U,p^\perp[X,U]\right\rangle
-\frac{1}{4} | \omega(X)U |^2 -\frac{1}{2} \left\langle\omega\left (p \nabla^{TM }_XX\right)U,U\right\rangle+O\left(\varepsilon^2\right).
$$

Recall that the leafwise scalar curvature $k^F$ (associated to $g^F$) has been defined in (\ref{1.23aa}). Let $k^{TM,\varepsilon}$ be the scalar curvature associated to $g^{TM}_\varepsilon$. The following formula gives the adiabatic limit ($\varepsilon\rightarrow 0$) behaviour of $k^{TM,\varepsilon}$. 

From   (\ref{a1.6})-(\ref{a1.8}), one finds
\begin{multline}\label{a1.9}
   k^{TM,\varepsilon}=-\sum_{i,\,j=1}^q\left \langle R^{TM,\varepsilon}(f_i,f_j)f_i,f_j\right\rangle -\varepsilon^2\sum_{s,\,t=1}^{q_1}\left \langle R^{TM,\varepsilon}(h_s,h_t)h_s,h_t\right\rangle
\\ - 2 \sum_{i=1}^q \sum_{s=1}^{q_1}\left\langle R^{TM,\varepsilon}(f_i,h_s)f_i,h_s\right\rangle
\\
=k^F+ \frac{1}{4} \sum_{i=1}^q\sum_{s,\,t=1}^{q_1} \left(\left|\left\langle\omega(f_i)h_s,h_t\right\rangle\right|^2-
\langle\omega(f_i)h_s,h_s\rangle \langle\omega(f_i)h_t,h_t\rangle\right)-\sum_{i=1}^q\sum_{s=1}^{q_1} f_i\left(\left\langle \omega(f_i)h_s,h_s\right\rangle\right)
\end{multline}
$$
+2\sum_{i=1}^q\sum_{s=1}^{q_1}  \left\langle  \omega(f_i)h_s,p^\perp[f_i,h_s]\right\rangle
+\frac{1}{2}\sum_{i=1}^q\sum_{s=1}^{q_1}  | \omega(f_i)h_s |^2
+\sum_{i=1}^q\sum_{s=1}^{q_1} \left\langle\omega\left (p \nabla^{TM }_{f_i}f_i\right)h_s,h_s\right\rangle+O\left(\varepsilon^2\right)
$$
$$
=k^F+\frac{3}{4}\sum_{i=1}^q |\omega(f_i)|^2 -\frac{1}{4}\sum_{i=1}^q\left(\sum_{s=1}^{q_1} \langle\omega(f_i)h_s,h_s\rangle \right)^2
-\sum_{i=1}^q\sum_{s=1}^{q_1} f_i\left(\left\langle \omega(f_i)h_s,h_s\right\rangle\right)
$$
$$
+2\sum_{i=1}^q\sum_{s=1}^{q_1}  \left\langle  \omega(f_i)h_s,p^\perp[f_i,h_s]\right\rangle
 +\sum_{i=1}^q\sum_{s=1}^{q_1} \left\langle\omega\left (p \nabla^{TM }_{f_i}f_i\right)h_s,h_s\right\rangle+O\left(\varepsilon^2\right).
$$
 
\begin{rem}
If $q_1=1$, that is, if $(M,F)$ is a codimension one foliation, one finds
\begin{align}\label{a1.10}
  p^\perp[f_i,h_1]=p^\perp \nabla^{TM,\varepsilon}_{f_i}h_1-p^\perp \nabla^{TM,\varepsilon}_{h_1}f_i=-p^\perp \nabla^{TM,\varepsilon}_{h_1}f_i. 
\end{align}
Then (\ref{a1.9}) becomes,
\begin{align}\label{a1.11}
   k^{TM,\varepsilon} =k^F -\frac{1}{2}\sum_{i=1}^q |\omega(f_i)|^2 
-\sum_{i=1}^q  f_i\left(\left\langle \omega(f_i)h_1,h_1\right\rangle\right)+\sum_{i=1}^q \left\langle\omega\left (p \nabla^{TM }_{f_i}f_i\right)h_1,h_1\right\rangle
 +O\left(\varepsilon^2\right).
\end{align}
In this case, if one assumes $M$ is spin and takes the   Dirac operator $D_\varepsilon$ associated to $g^{TM}_\varepsilon$, then by the Lichnerowicz formula \cite{L63} and (\ref{a1.11}), one deduces that
 \begin{multline}\label{a1.12}
 D_\varepsilon ^2=\sum_{i=1}^q\left(\nabla^{ \varepsilon}_{f_i}+\frac{1}{4}\left\langle \omega(f_i)h_1,h_1\right\rangle\right)^*\left(\nabla^{ \varepsilon}_{f_i}+\frac{1}{4}\left\langle \omega(f_i)h_1,h_1\right\rangle\right)+\varepsilon^2\left(\nabla^{ \varepsilon}_{h_1}\right) ^* \nabla^{ \varepsilon}_{h_1} 
\\
+\frac{1}{4}\left(k^F-\frac{3}{4}\sum_{i=1}^q|\omega(f_i)|^2\right)+O\left(\varepsilon^2\right)
,
\end{multline}
where $\nabla^\varepsilon$ is the canonical  connection on the corresponding spinor bundle, which implies that
 \begin{align}\label{a1.12a}
 D_\varepsilon ^2\geq \frac{1}{4}\left(k^F-\frac{3}{4}\sum_{i=1}^q|\omega(f_i)|^2\right)+O\left(\varepsilon^2\right)
.
\end{align}
\end{rem}

\comment{

\subsection{Proof of Theorem \ref{t0.8}}\label{s2.12}

From our positivity  result (\ref{175d}), which can well be used to
replace the Lichnerowicz positivity used in \cite[Proof of
Theorem 2.1]{GL80}, one can proceed as in  \cite{GL80} to get the
same conclusion of \cite[Theorem 2.1]{GL80} and \cite[Corollary
2.2]{GL80} under the condition of Theorem \ref{t0.8}. In particular, Theorem \ref{t0.8} holds.

 We leave  the details and other immediate generalizations  to interested readers.

\begin{rem}\label{t32}  By a theorem of Bourguignon (cf. \cite[Theorem 5.7 of Chapter  IV]{LaMi89}),
 if $F=T(T^n)$, one can improve  the corresponding result of Schoen-Yau \cite{SY79}
 and Gromov-Lawson \cite{GL80a} by stating that any Riemannian metric on $T^n$ with
 non-negative scalar curvature is Ricci flat. It is natural to ask whether there is
 an analogue of  Bourguignon's result for foliations. If one could prove such an analogue,
  then by combining with Theorem \ref{t0.8} one would solve one of the conjectures
  stated in \cite[Conjectures C13]{MS88}.
\end{rem}

}

\def\cprime{$'$} \def\cprime{$'$}
\providecommand{\bysame}{\leavevmode\hbox to3em{\hrulefill}\thinspace}
\providecommand{\MR}{\relax\ifhmode\unskip\space\fi MR }
\providecommand{\MRhref}[2]{%
  \href{http://www.ams.org/mathscinet-getitem?mr=#1}{#2}
}
\providecommand{\href}[2]{#2}

\end{document}